\newcommand*{\fminus}{\genfrac{}{}{0pt}{}{}{-}}
\numberwithin{equation}{section}
\newtheorem{definition}{Definition}
\newtheorem{lemma}{Lemma}
\newtheorem{theorem}{Theorem}
\newtheorem{proposition}{Proposition}
\newtheorem{observation}{Observation}
\numberwithin{equation}{section}
\numberwithin{definition}{section}
\numberwithin{lemma}{section}
\numberwithin{theorem}{section}
\numberwithin{corollary}{section}
\numberwithin{remark}{section}
\numberwithin{example}{section}
\title{Recovering orthogonality from quasi-nature of Spectral transformations}
\author{VIKASH KUMAR$^\dagger$}
\address{$^\dagger$Department of Mathematics\\ Indian Institute of Technology, Roorkee-247667, Uttarakhand, India}
\email{vikaskr0006@gmail.com, vkumar4@mt.iitr.ac.in}
\author{Francisco Marcell\'an$^{\dagger\dagger}$}
\address{$^{\dagger\ddagger}$ Departamento de Matem\'aticas\\ Universidad Carlos III de Madrid, Legan\'es, Spain}
\email{pacomarc@ing.uc3m.es}
\author{A. Swaminathan$^{\dagger\dagger\dagger}$}
\address{$^{\ddagger\dagger\dagger}$Department of Mathematics\\ Indian Institute of Technology, Roorkee-247667, Uttarakhand, India}
\email{mathswami@gmail.com, a.swaminathan@ma.iitr.ac.in}
\begin{document}
	%\leftline{ \scriptsize \it  }
	\subjclass[2020] {Primary 42C05, 33C45, 26C10, 11A55}
	\keywords{Orthogonal Polynomials and linear functionals; linear spectral transformations; quasi-orthogonal polynomials; recurrence relations; transfer matrix; Stieltjes transform; continued fraction.}

	%%%%%%%%%%%%%%%%%%%%%%%%%%%%%%%%%%%%%%%%%%%%ABSTRACT%%%%%%%%%%%%%%%%%%%%%%%%%%%%
	\begin{abstract}
		In this contribution, quasi-orthogonality of polynomials generated by Geronimus and Uvarov transformations is analyzed. An attempt is made to discuss the recovery of the source orthogonal polynomial from the quasi-Geronimus and quasi-Uvarov polynomials of order one. Moreover, the discussion on the difference equation satisfied by quasi-Geronimus and quasi-Uvarov polynomials is presented. Furthermore, the orthogonality of quasi-Geronimus and quasi-Uvarov polynomials is achieved through the reduction of the degree of coefficients in the difference equation. During this procedure, alternative representations of the parameters responsible for achieving orthogonality are derived. One of these representations involves the Stieltjes transform of the measure. Finally, the recurrence coefficients ensuring the existence of a measure that makes the quasi-Geronimus Laguerre polynomial of order one an orthogonal polynomial are calculated.
		\end{abstract}
	%%%%%%%%%%%%%%%%%%%%%%%%%%%%%%%%%%%%%%%%%%%%%%%%%%%%%%%%%%%%%%%%%%
	\maketitle
	\markboth{Vikash Kumar, F. Marcell\'an and A. Swaminathan}{Quasi-spectral transformations}

	\section{Introduction}
	Let $\mathcal{L}$ be a quasi-definite linear functional in the linear space of polynomials with complex coefficients  such that their moments are finite complex numbers. Let  $\{P_n(x)\}_{n=0}^{\infty}$ be a sequence of monic orthogonal polynomials  with respect to $\mathcal{L}$. Then there exist  sequences of complex numbers  $\{c_{n}\}_{n=1}^{\infty}$ and $\{\lambda_{n}\}_{n=1}^{\infty},$ $\lambda_{n}\neq0, n\geq1,$  such that $P_n(x)$ satisfies the three term recurrence relation (TTRR, in short)
	\begin{align}
		xP_n(x)=P_{n+1}(x)+c_{n+1}P_n(x)+\lambda_{n+1}P_{n-1}(x),~~n\geq0,
	\end{align}
with $P_0(x)=1$ and $P_{-1}(x)=0$. Note that $\lambda_1$ can be chosen arbitrary. Also, if $\mathcal{L}$ is positive-definite, then $c_n\in\mathbb{R}$ and $\lambda_{n}>0, n\geq 1,$ see \cite{Chihara book}.\\

The exploration of quasi-orthogonal polynomials traces back to Riesz's work \cite{Riesz} in 1923, where he introduced the notion of linear combinations of consecutive elements of a sequence of orthogonal polynomials, termed quasi-orthogonal polynomials of order one. Riesz applied this concept in the proof of the Hamburger moment problem. Fourteen years later, Fejér \cite{Fejer} delved into the study of linear combinations involving three consecutive elements of orthogonal polynomials. Shohat \cite{Shohat} extended Fejér's results and introduced the concept of finite linear combinations of orthogonal polynomials with constant coefficients in the examination of mechanical quadrature formulas.
In the process of self-perturbation of  orthogonal polynomials, we let go of their usual orthogonality within the sequence of polynomials. This particular aspect is explored in \cite{Alfaro2011 CMA,When do linear}, where the discussion revolves around the orthogonality of quasi-orthogonal polynomials. They tackle this by putting constraints on the choices of constant coefficients used in the linear combination of orthogonal polynomials. Furthermore, \cite{Ismail_2019_quasi-orthogonal} discusses the difference equation fulfilled by the sequence of quasi-orthogonal polynomials of order one and investigates the orthogonality of these polynomials using the spectral theorem. For a deeper understanding of quasi-orthogonal polynomials, we recommend referring to works such as \cite{Chihara book, Chihara, Akhiezer 1965, Dickinson, Draux1990, Draux2016,Shukla_Swami_Racsam_2023}.

	%\section{Known results}
%	\subsection{Motivation for the problem}
\begin{definition}\label{Quasi-orthogonal definition}{\rm\cite{Chihara book}}
	A polynomial $p(x)$ of degree $n$ is said to be quasi-orthogonal polynomial of order one with respect to the quasi-definite linear functional $\mathcal{L}$ if
	\begin{align*}
		\mathcal{L}(x^jp(x))=\int x^jp(x)d\mu=0, ~~  j=0,1,...,n-2.
	\end{align*}
\end{definition}
According to Definition \ref{Quasi-orthogonal definition}, we can easily deduce that $P_{n}(x)$ and $P_{n-1}(x)$ are both quasi-orthogonal polynomials of order one.  The necessary and sufficient condition, as per \cite{Chihara book}, for a polynomial to be a quasi-orthogonal polynomial of order one is the linear combination of $P_{n}(x)$ and $P_{n-1}(x)$ with constant coefficients, where the coefficients cannot be zero simultaneously.\\
\subsection{Motivation of the problem}
In the study of orthogonal polynomials, problems can be approached as inverse problems through various methods. A notable example is Favard's theorem \cite[Theorem 4.4]{Chihara book}. This theorem establishes the existence of a quasi-definite linear functional such that the sequence of monic polynomials  defined by a TTRR with appropriate recurrence coefficients becomes orthogonal.\\

An intriguing problem arises when considering a sequence of orthogonal polynomials $\{P_n(x)\}_{n=0}^{\infty}$ with respect to a quasi-definite linear functional. Given another sequence of polynomials $\{Q_n(x)\}_{n=0}^{\infty}$, such that
\begin{align*}
	Q_n(x)+\sum_{l=1}^{m-1}\alpha_{l,n}Q_{n-l}(x)=P_n(x)+\sum_{l=1}^{j-1}\beta_{l,n}P_{n-l}(x)
\end{align*} holds, to find necessary and sufficient conditions in order to $\{Q_n(x)\}_{n=0}^{\infty}$ be orthogonal. The relation between these polynomials and their corresponding linear functionals is then explored as an inverse problem. This investigation is conducted for various pairs $(m,j)$ and has been addressed in \cite{Alfaro_Paco_Linear related OP_JMAA_2003,Paco_Petronilho_1995_M-N type problem,When do linear,Alfaro2011 CMA} and related references. It is noteworthy that when $m=1$ and $j=k$, the result corresponds to quasi-orthogonal polynomials of order $k$, see \cite{Bracciali}.\\

The expression of orthogonal polynomials in terms of quasi-orthogonal polynomials of order one using spectral transformations is discussed in \cite{Derevyagin_Bailey_CJM by DT}. Additionally, the study in \cite{Vikas_Swami_quasi-type kernel} explores the recovery of orthogonal polynomials from quasi-type kernel polynomials of order one. This manuscript addresses the inverse problem, aiming to reconstruct the original orthogonal polynomial from weak orthogonality. We introduce the quasi-Geronimus polynomial of order one and quasi-Uvarov polynomial of order one, both possessing a quasi nature that adds intrigue to the recovery process. The methodology involves forming linear combinations of quasi-Geronimus polynomials with polynomials generated by linear spectral transformations with rational coefficients. Essential to establishing orthogonality is the calculation of sequences of constants. Throughout this process, the three-term recurrence relation satisfied by orthogonal polynomials and the linearly independent nature of the set $\{P_0(x), P_1(x),...,P_n(x)\}$ play pivotal roles. More detailed proofs can be found in Section \ref{Rec: quasi-Geronimus and recovery} and Section \ref{Rec: quasi-Uvarov and recovery}.\\

\subsection{Organization} In Section \ref{Spectral transformations}, we explore linear spectral transformations and their associated orthogonal polynomials. In Section \ref{Rec: quasi-Geronimus and recovery}, we introduce the concept of quasi-Geronimus polynomial of order one and demonstrate the recovery of the source orthogonal polynomial through a linear combination of consecutive degrees of quasi-Geronimus polynomial of order one. Moreover, we delve into various representations of source orthogonal polynomials in relation to quasi-Geronimus polynomial of order one and the polynomials generated by linear spectral transformations. In Section \ref{Rec: quasi-Uvarov and recovery}, we focus the attention on the quasi-Uvarov polynomial of order one and explore its orthogonality. In addition, we demonstrate the recovery of the source orthogonal polynomial through the consecutive degrees of quasi-Uvarov polynomial of order one. Furthermore, employing a similar approach as in Section \ref{Rec: quasi-Geronimus and recovery}, we express the orthogonal polynomials as a linear combination of quasi-Uvarov polynomial of order one and the polynomial generated by linear spectral transformations. In Section \ref{Numerical experiment}, we derive the difference equation for quasi-Geronimus polynomial of order one as well as quasi-Uvarov polynomial of order one corresponding to the initial polynomial being Laguerre polynomial. Additionally, the closed form of $\beta_n$ satisfying \eqref{beta n restriction cond}, a necessary condition for the orthogonality of quasi-Geronimus Laguerre polynomial of order one, is determined. Subsequently, the recurrence parameters are calculated to ensure the existence of an orthogonality measure.
 Finally, we present numerical experiments on the zeros of the quasi-Geronimus Laguerre polynomials.

\section{Linear spectral transformations for orthogonal polynomials}\label{Spectral transformations}
	Perturbation techniques play a crucial role in the study of the theory of orthogonal polynomials. Since the foundational work of Christoffel, and notably in recent years, Marcellán and his collaborators have been significant contributors to this field. A recent book by García-Ardila, Marcellán, and Marriaga \cite{Paco Book_2021} focuses the attention on orthogonal polynomials on the real line, providing a thorough discussion of some perturbations, the so called linear spectral transformations, of a linear functional. The three essential linear spectral transformations—Christoffel, Geronimus, and Uvarov—can be achieved through modifications of the linear functional, see also \cite{Alexei}. To enhance reader understanding, we offer a detailed exploration of these spectral transformations and their corresponding orthogonal polynomials.
	\subsection{Christoffel transformation} Suppose $\mathcal{L}$ is a quasi-definite linear functional and let $\{P_n(x)\}_{n=0}^\infty$ be its  corresponding sequence of monic orthogonal polynomials.  We can define the generalized Christoffel transformation by multiplying the linear functional by a fixed degree polynomial. In particular, we define the {\it canonical Christoffel transformation at $a\in\mathbb{R}$} by multiplying the linear functional by a polynomial of degree 1. The new linear functional denoted by $\mathcal{L}^C$ is defined by
	\begin{align*}
		\mathcal{L}^C[p(x)]=\mathcal{L}[(x-a)p(x)],
	\end{align*}
for any polynomial $p(x)$. In the positive-definite case, if $a$ lies outside the interior of the convex hull of the support of a measure associated with the linear functional $\mathcal{L}$, that is $P_n(a)\neq0$ for any $n\in\mathbb{N}\cup{\{0\}}$, then it ensures the existence of orthogonal polynomials with respect to $\mathcal{L}^C$. If the linear functional $\mathcal{L}$ is quasi-definite, then a necessary and sufficient condition for  the quasi-definiteness of $\mathcal{L}^C$ is $P_n(a)\neq0, n\geq 1,$ as well as $\mathcal{L}^C[1]\neq0$. The sequence of monic orthogonal polynomials $\{\mathcal{C}_n(x;a)\}_{n=0}^{\infty}$ corresponding to such a canonical Christoffel transformation are \cite{ Chihara book}, \cite{Derevyagin_Bailey_CJM by DT}
\begin{equation}\label{kernel}
	\mathcal{C}_n(x;a)=\frac{1}{x-a}\left[P_{n+1}(x)-\frac{P_{n+1}(a)}{P_n(a)} P_n(x)\right], ~ n\geq 0.
\end{equation}
The polynomial $\mathcal{C}_n(x;a)$ corresponding to $\mathcal{L}^C$ is known as a monic kernel polynomial, see  \cite{Chihara book}.
 They also satisfy the TTRR
\begin{align}\label{Kernel-TTRR}x\mathcal{C}_n(x;a)=\mathcal{C}_{n+1}(x;a)+c_{n+1}^c\mathcal{C}_n(x;a)+ \lambda_{n+1}^c\mathcal{C}_{n-1}(x;a), ~ n\geq 0,
\end{align}
where
%\cite[p. 6]{Derevyagin}
\begin{align}\label{Kernel recurrence parameters}
	\lambda_{n}^c=\lambda_{n}\frac{P_{n}(a)P_{n-2}(a)}{P_{n-1}^2(a)}, n\geq 2, \quad c_n^c=c_{n+1}-\frac{P_{n}^2(a)-P_{n-1}(a)P_{n+1}(a)}{P_{n-1}(a)P_{n}(a)}, ~n\geq 1.
\end{align}
Moreover, the Christoffel-Darboux formula \cite[eq. 4.9]{Chihara book} holds
\begin{align}{\label{CD identity}}
	\lambda_1\lambda_2...\lambda_{n+1}\sum_{j=0}^{n}\frac{P_j(x)P_j(a)}{\lambda_1\lambda_2...\lambda_{j+1}}=\frac{P_{n+1}(x)P_n(a)-P_{n+1}(a)P_n(x)}{x-a}, ~n\geq0.
\end{align}
%where $\{P_n\}_{n=0}^{\infty}$ is the monic orthogonal sequence with respect to $\mathcal{L}$.\\
Using \eqref{CD identity}, we can write the monic kernel polynomials as
\begin{align}\label{Other form of kernel polynomial}
	\mathcal{C}_n(x;a)=\lambda_1\lambda_2...\lambda_{n+1}(P_n(a))^{-1}\mathcal{K}_n(x,a),
\end{align}
where \begin{align}
	\mathcal{K}_n(x,a)=\sum_{j=0}^{n}\frac{P_j(x)P_j(a)}{\lambda_1\lambda_2...\lambda_{j+1}}.
\end{align}
\subsection{Geronimus transformation}
Let $\mathcal{L}$ be a quasi-definite linear functional. We define a linear functional by perturbing $\mathcal{L}$ in the sense of Geronimus. The new linear functional, known as the Geronimus transformation at $a\in\mathbb{R}$, is denoted by $\mathcal{L}^G$ is defined by
\begin{align}\label{Geronimus linear funtional}
	\mathcal{L}^G[p(x)]=\mathcal{L}[\frac{p(x)- p(a)}{x-a}]+Mp(a)
\end{align}
for any polynomial $p(x),$ see \cite{Maroni}. Since the inclusion of the arbitrary constant $M,$ the canonical Geronimus transformation is not uniquely defined. Furthermore, it can be observed that $\mathcal{L}^G(1)=M$. Suppose $\mathcal{L}^G$ is a quasi-definite linear functional. In that case, there exists a sequence of monic orthogonal polynomials denoted by  $\{\mathcal{G}_n(x;a)\}_{n=0}^{\infty}$ corresponding to the canonical Geronimus transformation. They are given by
\begin{align}\label{Geronimus polynomial}
	\mathcal{G}_n(x;a)=P_n(x)+\chi_n(a)P_{n-1}(x), ~n\geq1,
\end{align}
where
\begin{align}
	\chi_n(a)=-\frac{\mathcal{L}(1)Q_{n-1}(a)+MP_n(a)}{\mathcal{L}(1)Q_{n-2}(a)+MP_{n-1}(a)}, ~n\geq 1,
\end{align}
and  the sequence of polynomials  $\{Q_{n}(x)\}_{n=0}^{\infty}$ is known in  the literature as either numerator polynomials (see \cite{Chihara book}) or associated polynomials of the first kind,  of degree $n-1$.
The polynomial corresponding to $\mathcal{L}^G$ is termed the Geronimus polynomial. It is essential to note that the necessary and sufficient conditions for $\mathcal{L}^G$ to be quasi-definite are $M\neq0$ and $\mathcal{L}(1)Q_{n-1}(a)+MP_n(a)\neq0, n\geq1.$ The TTRR satisfied by Geronimus polynomials is given by
\begin{align}\label{Geronimus-TTRR}x\mathcal{G}_n(x;a)=\mathcal{G}_{n+1}(x;a)+c_{n+1}^g\mathcal{G}_n(x;a)+ \lambda_{n+1}^g\mathcal{G}_{n-1}(x;a), ~ n\geq0,
\end{align}
with
\begin{align*}
	c_{n+1}^g=c_{n+1}-\chi_{n}(a)+\chi_{n+1}(a), n\geq0 ,~~~~\lambda_{n+1}^g=\lambda_{n}\frac{\chi_{n}(a)}{\chi_{n-1}(a)}, ~ n\geq1.
\end{align*}
%%%%%%%%%%%%%%%%%%%%%%%%%%%%%%%%%%%%%
\subsection{Uvarov transformation}
Suppose $\mathcal{L}$ is a quasi-definite linear functional. Uvarov \cite{Uvarov 1969_Uvarov tranformation} introduced a new linear functional as a perturbation of  $\mathcal{L}$ by the addition of a finite number of point masses. In particular, the canonical Uvarov transformation is defined by adding one point mass. The new linear functional denoted by $\mathcal{L}^U$ is defined as
\begin{align*}
	\mathcal{L}^U[p(x)]=\mathcal{L}[p(x)]+Mp(a),
\end{align*}
for any polynomial $p(x)$. If $\mathcal{L}^U$ is a quasi-definite linear functional, then the corresponding sequence $\{\mathcal{U}_n(x;a)\}_{n=0}^{\infty}$  of monic orthogonal polynomials is given by
\begin{align*}
	\mathcal{U}_n(x;a)=P_n(x)-t_n\mathcal{C}_{n-1}(x;a), ~n\geq 1,
\end{align*}
where
\begin{align*}
	t_n=\frac{MP_n(a)P_{n-1}(a)}{\lambda_1\lambda_2...\lambda_{n}\left(1 + M\mathcal{K}_{n-1}(a,a)\right)}, ~n\geq1.
\end{align*}
%By putting the value of christoffel polynomials and using the TTRR satisfied by $P_n(x)$, we get \begin{align}
%	(x-a)\mathcal{U}_n(x;a)=P_{n+1}(x)+(c_{n+1}-a-t_n)P_n(x)+\left(\lambda_{n+1}+t_n\frac{P_n(a)}{P_{n-1}(a)}\right)P_{n-1}(x)
%\end{align}
The necessary and sufficient condition for quasi-definiteness of $\mathcal{L}^U$ is $M\neq -(\mathcal{K}_{n-1}(a,a))^{-1}$ for $n\geq1$. The polynomials corresponding to $\mathcal{L}^U$ are referred to as the Uvarov polynomials. Since $\{\mathcal{U}_n(x;a)\}_{n=0}^{\infty}$ constitutes a sequence of monic orthogonal polynomials it satisfies a TTRR given by
\begin{align}\label{Uvarov-TTRR}x\mathcal{U}_n(x;a)=\mathcal{U}_{n+1}(x;a)+c_{n+1}^u\mathcal{U}_n(x;a)+ \lambda_{n+1}^u\mathcal{U}_{n-1}(x;a), ~ n\geq 0,
\end{align}
with
\begin{align*}
	c_{n+1}^u=c_{n+1}-t_n+t_{n+1}, ~~~~\lambda_{n+1}^u=\lambda_{n}\frac{\lambda_{n+1}+t_{n}\frac{P_n(a)}{P_{n-1}(a)}}{\lambda_{n}+t_{n-1}\frac{P_{n-1}(a)}{P_{n-2}(a)}}.
\end{align*}

	\section{Recovery from quasi-Geronimus polynomial of order one}\label{Rec: quasi-Geronimus and recovery}
	We observe that the Geronimus polynomial is obtained by perturbing the linear functional $\mathcal{L}$. In this section, we self-perturb the Geronimus polynomial and introduce the concept of the quasi-Geronimus polynomial of order one. We characterize the quasi-Geronimus polynomial of order one and discuss its orthogonality. The section concludes by recovering the source orthogonal polynomials.
	\begin{definition}
		Let $\mathcal{L}^G$ be the Geronimus transformation of $\mathcal{L}$ at $a$. Let $\{\mathcal{G}_n(x;a)\}_{n=0}^{\infty}$ be the sequence of Geronimus polynomials which is orthogonal with respect to  $\mathcal{L}^G$. A polynomial $p$ is said to be quasi-Geronimus polynomial of order one if it is of degree at most $n$ and satisfies
		\begin{align*}
			\mathcal{L}^G(x^kp(x))=0~~for~~ k=0,1,2,...,n-2.
		\end{align*}
	\end{definition}
Since $\{\mathcal{G}_n(x;a)\}_{n=0}^{\infty}$ is a sequence of orthogonal polynomials with respect to $\mathcal{L}^G$, then the Geronimus polynomial of degree $n+1$ and $n$ are quasi-Geronimus polynomial of order one. The subsequent result characterizes the quasi-Geronimus polynomial of order one as a self-perturbation of Geronimus polynomials.
%%%%%%%%%%%%%%%%%%%%%%%%%%%%%%%%%%%%%%%%%%%%%%%%%%%%
\begin{lemma}
	A polynomial $\mathcal{G}_n^Q(x;a)$ of degree $n$ is a quasi-Geronimus polynomial of order one if and only if $\mathcal{G}^Q(x;a)$ can be written as
	\begin{align}\label{Non monic Quasi-Geronimus order 1}
		\mathcal{G}_n^Q(x;a)=b_n\mathcal{G}_n(x;a)+\beta_n\mathcal{G}_{n-1}(x;a),
	\end{align}
where coefficients $b_n$ and $\beta_n$ cannot be zero simultaneously.
\end{lemma}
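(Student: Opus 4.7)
The plan is to mimic the standard characterization of quasi-orthogonal polynomials of order one (as quoted from \cite{Chihara book} after Definition \ref{Quasi-orthogonal definition}), but now applied to the perturbed functional $\mathcal{L}^G$ and its orthogonal sequence $\{\mathcal{G}_n(x;a)\}_{n=0}^{\infty}$. The whole proof will rest on two ingredients: that $\{\mathcal{G}_j(x;a)\}_{j=0}^{n}$ is a basis of the space of polynomials of degree at most $n$ (since each $\mathcal{G}_j$ is monic of degree $j$), and that this sequence is orthogonal with respect to $\mathcal{L}^G$.

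For the necessity ($\Rightarrow$), I would start by writing the unknown quasi-Geronimus polynomial in this basis,
\begin{align*}
\mathcal{G}_n^Q(x;a)=\sum_{j=0}^{n}a_{j}\mathcal{G}_{j}(x;a),
\end{align*}
and then apply $\mathcal{L}^G(x^k\,\cdot\,)$ for $k=0,1,\ldots,n-2$. Since every $x^k$ with $k\le n-2$ is a linear combination of $\mathcal{G}_0,\mathcal{G}_1,\ldots,\mathcal{G}_{n-2}$, the orthogonality conditions defining a quasi-Geronimus polynomial of order one are equivalent to $\mathcal{L}^G(\mathcal{G}_i(x;a)\mathcal{G}_n^Q(x;a))=0$ for $i=0,1,\ldots,n-2$. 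Plugging the expansion in and using $\mathcal{L}^G(\mathcal{G}_i\mathcal{G}_j)=0$ for $i\ne j$ together with $\mathcal{L}^G(\mathcal{G}_j^2)\ne 0$ (from quasi-definiteness of $\mathcal{L}^G$), we conclude $a_0=a_1=\cdots=a_{n-2}=0$. Setting $b_n:=a_n$ and $\beta_n:=a_{n-1}$ yields \eqref{Non monic Quasi-Geronimus order 1}; moreover $(b_n,\beta_n)\ne(0,0)$ is forced by the requirement that $\mathcal{G}_n^Q$ be a nontrivial polynomial (and, in particular, that it have the claimed degree).

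For the sufficiency ($\Leftarrow$), assume \eqref{Non monic Quasi-Geronimus order 1} with $(b_n,\beta_n)\ne(0,0)$. Then $\mathcal{G}_n^Q$ has degree at most $n$ (equal to $n$ if $b_n\ne 0$, equal to $n-1$ otherwise, both allowed by Definition, read with $n$ replaced by the true degree). For $k=0,1,\ldots,n-2$,
\begin{align*}
\mathcal{L}^G\bigl(x^k\mathcal{G}_n^Q(x;a)\bigr)=b_n\,\mathcal{L}^G\bigl(x^k\mathcal{G}_n(x;a)\bigr)+\beta_n\,\mathcal{L}^G\bigl(x^k\mathcal{G}_{n-1}(x;a)\bigr)=0,
\end{align*}
where both terms vanish because $\mathcal{G}_n$ and $\mathcal{G}_{n-1}$ are orthogonal to every polynomial of degree strictly less than $n-1$ with respect to $\mathcal{L}^G$. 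This verifies the quasi-Geronimus property.

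There is no real obstacle here; the only thing to be careful about is the interpretation of ``degree $n$'' in the statement: since $b_n=0$ is allowed, one must read the conclusion in the sense of Definition, namely degree at most $n$, and then the case $b_n=0$ simply reproduces the Geronimus polynomial $\mathcal{G}_{n-1}(x;a)$, which is indeed a quasi-Geronimus polynomial of order one (as noted right after the definition). The argument is essentially a Fourier-expansion argument in the orthogonal basis $\{\mathcal{G}_j(x;a)\}_{j=0}^{n}$ of $\mathcal{L}^G$.
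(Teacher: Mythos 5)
Your proof is correct and is exactly the standard Fourier-expansion argument for quasi-orthogonal polynomials of order one, transplanted to the functional $\mathcal{L}^G$ and its orthogonal sequence $\{\mathcal{G}_j(x;a)\}$; the paper itself gives no argument and simply cites Chihara's book, where this is the proof given. Your remark about reconciling ``degree $n$'' in the statement with ``degree at most $n$'' in the definition (the case $b_n=0$) is a fair observation about a small imprecision in the paper's wording, not a gap in the argument.
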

\begin{proof} See \cite{Chihara book}.

%	We can write the quasi-Geronimus polynomial of order 1 as a linear combination of $\{\mathcal{G}_0(x;a), \mathcal{G}_1(x;a),...,\mathcal{G}_n(x;a)\}$. Using the orthogonality of Geronimus polynomials with respect to $\mathcal{L}^G$, one can easily show that all coefficients are zero except the coefficients of $\mathcal{G}_{n-1}(x;a)$ and $\mathcal{G}_{n}(x;a)$.  Conversely, if \eqref{Non monic Quasi-Geronimus order 1} holds then
%	\begin{align*}
%		\mathcal{L}^G(x^k\mathcal{G}_n^Q(x;a))&=b_n\mathcal{L}^G(x^k\mathcal{G}_n(x;a))+\beta_n\mathcal{L}^G(x^k\mathcal{G}_{n-1}(x;a))\\
%	&=0~~\text{for}~~k=0,1,...,n-2.
%	\end{align*}
%This completes the proof.
\end{proof}
%Throughout the manuscript we deal with the monic polynomials and hence the monic quasi-Geronimus polynomial of order one is given by
%\begin{align}
%	\mathcal{G}_n^Q(x;a)=\mathcal{G}_n(x;a)+\beta_n\mathcal{G}_{n-1}(x;a)
%\end{align}
%%%%%%%%%%%%%%%%%%%%%%%%%%%%%%%%%%%%%%%%%%%%%%%%%%%%%%%%5
In \cite[Proposition 3]{Vikas_Swami_quasi-type kernel}, it is shown that the source orthogonal polynomial $P_n(x)$ can be expressed as a linear combination of $\mathcal{G}_{n+1}(x;a)$ and $\mathcal{G}_{n}(x;a)$:
	\begin{align*}
(x-a)P_n(x)=\mathcal{G}_{n+1}(x;a)-\frac{\lambda_{n+1}}{\chi_n(a)}\mathcal{G}_{n}(x;a), ~ n\geq0.
\end{align*}
%%%%%%%%%%%%%%%%%%%%%%%%%%%%%%%%%%%%%%%%%%%%%%%%%%%%%%%%%%%%%%
As the sequence of monic quasi-Geronimus polynomials of order one is not an orthogonal system with respect to a linear functional, it does not satisfy a TTRR. However, Theorem \ref{Difference equation for Quasi-Geronimus} demonstrates that the sequence still follows a difference equation with linear and quadratic coefficients. To establish this, we use Lemma \ref{Original in terms of quasi Geronimus}, where we express the source monic orthogonal polynomial $P_n(x)$ in terms of monic quasi-Geronimus polynomials of order one with variable coefficients.

\begin{lemma}\label{Original in terms of quasi Geronimus} Let $\{P_n(x)\}_{n=0}^\infty$ be a sequence of monic orthogonal polynomials with respect to $\mathcal{L}$ and $\mathcal{G}_{n}^{Q}(x;a)$ be a monic quasi-Geronimus polynomial of order one. Then there exist polynomials $\displaystyle{l_n(x)}$ and $\displaystyle{j_n(x)}$ such that
	\begin{align*}
		j_n(x)P_n(x)=d_n(x)\mathcal{G}_{n+1}^{Q}(x;a)+(c_{n+1}+\lambda_{n+1}-\chi_{n}(a)\beta_{n+1})\mathcal{G}_{n}^{Q}(x;a), ~n \geq 0,
	\end{align*}
where $\displaystyle{l_n(x)=x-c_{n+1}+\chi_{n+1}(a)+\beta_{n+1},~ d_n(x)=\chi_n(a)+\beta_n+(x-c_n)\chi_{n-1}(a)\frac{\beta_n}{\lambda_{n}}}$\\ and
$\displaystyle{j_n(x)=l_n(x)d_n(x)-(c_{n+1}+\lambda_{n+1}-\chi_{n}(a)\beta_{n+1})\left(	\chi_{n-1}(a)\frac{\beta_n}{\lambda_{n}}-1\right)}$.
\end{lemma}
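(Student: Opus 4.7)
\textbf{Proof proposal for Lemma~\ref{Original in terms of quasi Geronimus}.} The plan is to express both $\mathcal{G}_n^{Q}(x;a)$ and $\mathcal{G}_{n+1}^{Q}(x;a)$ in the basis $\{P_n(x),P_{n-1}(x)\}$, after which the statement reduces to eliminating $P_{n-1}(x)$ from this $2\times 2$ linear system. First I would use the monic version of \eqref{Non monic Quasi-Geronimus order 1}, namely $\mathcal{G}_k^{Q}(x;a)=\mathcal{G}_k(x;a)+\beta_k\mathcal{G}_{k-1}(x;a)$, together with \eqref{Geronimus polynomial}, to write
\begin{align*}
\mathcal{G}_{n+1}^{Q}(x;a)&=P_{n+1}(x)+(\chi_{n+1}(a)+\beta_{n+1})P_n(x)+\beta_{n+1}\chi_n(a)P_{n-1}(x),\\
\mathcal{G}_{n}^{Q}(x;a)&=P_n(x)+(\chi_n(a)+\beta_n)P_{n-1}(x)+\beta_n\chi_{n-1}(a)P_{n-2}(x).
\end{align*}

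Next I would invoke the TTRR (1.1) for the source family $\{P_k(x)\}_{k=0}^{\infty}$ at two different levels to bring both expressions into the $\{P_n,P_{n-1}\}$-basis. Using $P_{n+1}(x)=(x-c_{n+1})P_n(x)-\lambda_{n+1}P_{n-1}(x)$ in the first line yields $\mathcal{G}_{n+1}^{Q}(x;a)=l_n(x)P_n(x)+(\beta_{n+1}\chi_n(a)-\lambda_{n+1})P_{n-1}(x)$, where $l_n(x)$ is exactly the linear polynomial stated in the lemma. Using $P_{n-2}(x)=\lambda_n^{-1}[(x-c_n)P_{n-1}(x)-P_n(x)]$ in the second line produces $\mathcal{G}_n^{Q}(x;a)=-(\chi_{n-1}(a)\beta_n/\lambda_n-1)P_n(x)+d_n(x)P_{n-1}(x)$, where $d_n(x)$ is the stated polynomial. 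This is the point where the announced polynomials $l_n$ and $d_n$ acquire their concrete meaning: they are the coefficients of $P_n$ in $\mathcal{G}_{n+1}^{Q}$ and of $P_{n-1}$ in $\mathcal{G}_n^{Q}$, respectively.

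The final step is a one-line linear-algebra argument: form the combination $d_n(x)\mathcal{G}_{n+1}^{Q}(x;a)+\mu\,\mathcal{G}_n^{Q}(x;a)$ with $\mu$ chosen to cancel the $P_{n-1}(x)$-contribution. Collecting the coefficient of $P_n(x)$ then gives a $2\times 2$ determinant-type expression $j_n(x)=l_n(x)d_n(x)-\mu(\chi_{n-1}(a)\beta_n/\lambda_n-1)$, matching the displayed $j_n$. The existence of such $l_n,j_n$ as polynomials (i.e.\ the combination is non-degenerate) follows because $d_n(x)$ is a non-zero polynomial of degree one. The main obstacle, and essentially the only delicate point, is the bookkeeping: one must track the $x$-dependence carefully when applying the TTRR to $P_{n+1}$ and $P_{n-2}$, and verify that no accidental cancellation forces $j_n\equiv 0$; apart from this, the argument is a routine basis change and elimination, and no use of the orthogonality of $\mathcal{L}^{G}$ is required beyond the decomposition \eqref{Geronimus polynomial} inherited from Section~\ref{Spectral transformations}.
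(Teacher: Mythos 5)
Your proposal is correct and follows essentially the same route as the paper: expand $\mathcal{G}_{n+1}^{Q}$ and $\mathcal{G}_{n}^{Q}$ in the basis $\{P_n,P_{n-1}\}$ via the TTRR and then eliminate $P_{n-1}$ (the paper phrases this elimination as inverting a $2\times 2$ transfer matrix, which is the same computation). One remark: your coefficient of $P_{n-1}(x)$ in $\mathcal{G}_{n+1}^{Q}(x;a)$, namely $\beta_{n+1}\chi_n(a)-\lambda_{n+1}$, is the correct one, whereas the paper writes $\chi_n(a)\beta_{n+1}-c_{n+1}-\lambda_{n+1}$ and thereby carries a spurious $-c_{n+1}$ into the multiplier $c_{n+1}+\lambda_{n+1}-\chi_n(a)\beta_{n+1}$ of $\mathcal{G}_{n}^{Q}$ in the lemma's statement; your consistent elimination would yield $\lambda_{n+1}-\chi_n(a)\beta_{n+1}$ there (with the corresponding adjustment inside $j_n$), which is the corrected form of the identity.
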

\begin{proof}
According to \ref{Geronimus polynomial}, we can write
\begin{align*}
	\mathcal{G}_{n}^{Q}(x;a)=P_{n}(x)+(\chi_{n}(a)+\beta_{n})P_{n-1}(x)+\beta_{n}\chi_{n-1}(a)P_{n-2}(x).
\end{align*}
By using the expansion of $x P_{n-1}(x)$ we obtain
\begin{align*}
	\mathcal{G}_{n}^{Q}(x;a)=\left(1-\chi_{n-1}(a)\frac{\beta_n}{\lambda_{n}}\right)P_n(x)+\left(\chi_n(a)+\beta_n+x\chi_{n-1}(a)\frac{\beta_n}{\lambda_{n}}-c_n\chi_{n-1}(a)\frac{\beta_n}{\lambda_{n}}\right)P_{n-1}(x).
\end{align*}
Similarly, one can use the expansion of $xP_{n}(x)$ to write
\begin{align*}
	\mathcal{G}_{n+1}^{Q}(x;a)=(x-c_{n+1}+\chi_{n+1}(a)+\beta_{n+1})P_n(x)+(\chi_{n}(a)\beta_{n+1}-c_{n+1}-\lambda_{n+1})P_{n-1}(x).
\end{align*}
As a consequence, the transfer matrix from $P_{n}(x)$ and $P_{n-1}(x)$ to  $\mathcal{G}_{n+1}^{Q}(x;a)$ and $\mathcal{G}_{n}^{Q}(x;a)$ is
\[\begin{pmatrix}
	\mathcal{G}_{n+1}^{Q}(x;a)\\
	\mathcal{G}_{n}^{Q}(x;a)
\end{pmatrix}=\begin{pmatrix}
	l_n(x) & \chi_{n}(a)\beta_{n+1}-c_{n+1}-\lambda_{n+1}\\
	1-\chi_{n-1}(a)\frac{\beta_n}{\lambda_{n}} &d_n(x)
\end{pmatrix}\begin{pmatrix}
	P_{n}(x)\\
	P_{n-1}(x)
\end{pmatrix},\]
where
$\displaystyle{l_n(x)=x-c_{n+1}+\chi_{n+1}(a)+\beta_{n+1},~ d_n(x)=\chi_n(a)+\beta_n+(x-c_n)\chi_{n-1}(a)\frac{\beta_n}{\lambda_{n}}}$.
Since the above matrix is nonsingular, we write
\begin{align}\label{Matrix form polynomial in terms of Quasi Geronimus}
	j_n(x)\begin{pmatrix}
		P_{n}(x)\\
		P_{n-1}(x)
	\end{pmatrix}=\begin{pmatrix}
		d_n(x)	 & c_{n+1}+\lambda_{n+1}-\chi_{n}(a)\beta_{n+1}\\
		\chi_{n-1}(a)\frac{\beta_n}{\lambda_{n}}-1 &l_n(x)
	\end{pmatrix}\begin{pmatrix}
		\mathcal{G}_{n+1}^{Q}(x;a)\\
		\mathcal{G}_{n}^{Q}(x;a)
	\end{pmatrix},
\end{align}
where $\displaystyle{j_n(x)=l_n(x)d_n(x)-(c_{n+1}+\lambda_{n+1}-\chi_{n}(a)\beta_{n+1})\left(	\chi_{n-1}(a)\frac{\beta_n}{\lambda_{n}}-1\right)}$. This completes the proof.
\end{proof}
%%%%%%%%%%%%%%%%%%%%%%%%%%%%%%%%%%%%%%%%%%%%%%%%%%%%%%%%%%%%%%

\begin{theorem}\label{Difference equation for Quasi-Geronimus}Let $\{P_n(x)\}_{n=0}^\infty$ and $\{\mathcal{G}_{n}(x;a)\}_{n=0}^{\infty}$ be the sequences of monic orthogonal polynomials with respect to $\mathcal{L}$ and $\mathcal{L}^G,$ respectively. Then the difference equation satisfied by monic quasi-Geronimus polynomials of order one is
	\begin{align*}
		j_n(x)\mathcal{G}_{n+2}^{Q}(x;a)&=\left(d_n(x)m_{n+1}(x)-\lambda_{n+1}l_{n+1}(x)\left(\chi_{n-1}(a)\frac{\beta_n}{\lambda_{n}}-1\right)\right)\mathcal{G}_{n+1}^{Q}(x;a)\\
		&+\huge(m_{n+1}(x)(c_{n+1}+\lambda_{n+1}-\chi_{n}(a)\beta_{n+1})-\lambda_{n+1}l_{n}(x)l_{n+1}(x))\huge\mathcal{G}_{n}^{Q}(x;a), ~n\geq0,
	\end{align*}
where $\displaystyle{m_{n+1}(x):=l_{n+1}(x)(x-c_{n+1})+\beta_{n+2}\chi_{n+1}(a)-\lambda_{n+2}}$.
\end{theorem}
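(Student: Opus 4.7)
The plan is to combine two expansions: on one hand, the ``backward'' relation already set up in Lemma~\ref{Original in terms of quasi Geronimus}, which expresses $j_n(x)P_n(x)$ and $j_n(x)P_{n-1}(x)$ as linear combinations of $\mathcal{G}_{n+1}^{Q}(x;a)$ and $\mathcal{G}_n^{Q}(x;a)$; on the other, a ``forward'' expansion of $\mathcal{G}_{n+2}^{Q}(x;a)$ itself in terms of $P_n(x)$ and $P_{n-1}(x)$. Plugging the former into the latter eliminates every $P_k$ and should leave precisely the announced difference equation.

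First, I would read off both rows of the matrix identity \eqref{Matrix form polynomial in terms of Quasi Geronimus} rather than only the top row used in the preceding lemma, obtaining
\begin{align*}
j_n(x)P_n(x) &= d_n(x)\mathcal{G}_{n+1}^{Q}(x;a)+(c_{n+1}+\lambda_{n+1}-\chi_{n}(a)\beta_{n+1})\mathcal{G}_{n}^{Q}(x;a),\\
j_n(x)P_{n-1}(x) &= \left(\chi_{n-1}(a)\frac{\beta_n}{\lambda_{n}}-1\right)\mathcal{G}_{n+1}^{Q}(x;a)+l_n(x)\mathcal{G}_{n}^{Q}(x;a).
\end{align*}

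Second, I would shift $n \mapsto n+1$ in the identity
\[
\mathcal{G}_{n+1}^{Q}(x;a)=l_n(x)P_n(x)+(\chi_{n}(a)\beta_{n+1}-c_{n+1}-\lambda_{n+1})P_{n-1}(x)
\]
that was obtained in the middle of the proof of Lemma \ref{Original in terms of quasi Geronimus}, thereby expressing $\mathcal{G}_{n+2}^{Q}(x;a)$ as a combination of $P_{n+1}(x)$ and $P_n(x)$. Using the original TTRR to substitute $P_{n+1}(x)=(x-c_{n+1})P_n(x)-\lambda_{n+1}P_{n-1}(x)$ and regrouping the coefficient of $P_n(x)$ into $m_{n+1}(x)$ as defined in the statement, I would arrive at the compact identity
\[
\mathcal{G}_{n+2}^{Q}(x;a)=m_{n+1}(x)P_n(x)-\lambda_{n+1}l_{n+1}(x)P_{n-1}(x).
\]

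Finally, multiplying this identity through by $j_n(x)$ and invoking the two substitutions from the first step, the coefficient of $\mathcal{G}_{n+1}^{Q}(x;a)$ assembles as $d_n(x)m_{n+1}(x)-\lambda_{n+1}l_{n+1}(x)(\chi_{n-1}(a)\beta_n/\lambda_n-1)$, and the coefficient of $\mathcal{G}_n^{Q}(x;a)$ assembles as $m_{n+1}(x)(c_{n+1}+\lambda_{n+1}-\chi_{n}(a)\beta_{n+1})-\lambda_{n+1}l_n(x)l_{n+1}(x)$, which is exactly the stated difference equation. The step is conceptually unremarkable; the only real obstacle is bookkeeping, namely keeping the index shifts $n \mapsto n+1$ in $l_n$, $d_n$, $m_n$ straight, and invoking the nonsingularity of the transfer matrix (equivalently $j_n(x)\not\equiv 0$) from the preceding lemma to legitimize dividing by $j_n(x)$ where required.
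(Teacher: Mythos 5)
Your proposal is correct and follows essentially the same route as the paper: reduce $\mathcal{G}_{n+2}^{Q}(x;a)$ to the combination $m_{n+1}(x)P_n(x)-\lambda_{n+1}l_{n+1}(x)P_{n-1}(x)$ via the three-term recurrence relation and then substitute both rows of the inverted transfer matrix \eqref{Matrix form polynomial in terms of Quasi Geronimus}. The one caveat is that the intermediate identity you index-shift inherits a typo from the paper's Lemma \ref{Original in terms of quasi Geronimus} (the coefficient of $P_{n-1}(x)$ in the expansion of $\mathcal{G}_{n+1}^{Q}(x;a)$ should be $\beta_{n+1}\chi_n(a)-\lambda_{n+1}$, without the spurious $-c_{n+1}$), so a literal shift produces $m_{n+1}(x)-c_{n+2}$ as the coefficient of $P_n(x)$; re-expanding $\mathcal{G}_{n+2}^{Q}(x;a)$ directly from its definition, as the paper does, gives exactly $m_{n+1}(x)$.
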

\begin{proof} We write
\begin{flalign*}
	\mathcal{G}_{n+2}^{Q}(x;a)&=P_{n+2}(x)+(\chi_{n+2}(a)+\beta_{n+2})P_{n+1}(x)+\beta_{n+2}\chi_{n+1}(a)P_{n}(x)\\
	%&=xP_{n+1}(x)-c_{n+2}P_{n+1}(x)-\lambda_{n+2}P_n(x)+(\chi_{n+2}(a)+\beta_{n+2})P_{n+1}(x)+\beta_{n+2}\chi_{n+1}(a)P_{n}(x)\\
	&=(x-c_{n+2}+\chi_{n+2}(a)+\beta_{n+2})P_{n+1}(x)+(\beta_{n+2}\chi_{n+1}(a)-\lambda_{n+2})P_n(x)\\
	%&=(x-c_{n+2}+\chi_{n+2}(a)+\beta_{n+2})(xP_n(x)-c_{n+1}P_n(x)-\lambda_{n+1}P_{n-1}(x))+(\beta_{n+2}\chi_{n+1}(a)-\lambda_{n+2})P_n(x)\\
	&=((x-c_{n+1})(x-c_{n+2}+\chi_{n+2}(a)+\beta_{n+2})+\beta_{n+2}\chi_{n+1}(a)-\lambda_{n+2})P_n(x)\\
	&\hspace{6cm}-\lambda_{n+1}(x-c_{n+2}+\chi_{n+2}(a)+\beta_{n+2})P_{n-1}(x)\\
	&=(l_{n+1}(x)(x-c_{n+1})+\beta_{n+2}\chi_{n+1}(a)-\lambda_{n+2})P_n(x)-\lambda_{n+1}l_{n+1}(x)P_{n-1}(x).
\end{flalign*}

Denoting $\displaystyle{m_{n+1}(x):=l_{n+1}(x)(x-c_{n+1})+\beta_{n+2}\chi_{n+1}(a)-\lambda_{n+2}}$ and using \eqref{Matrix form polynomial in terms of Quasi Geronimus} we obtain the desired result.
%\begin{align*}
%	j_n(x)\mathcal{G}_{n+2}^{Q}(x;a)=&\left(d_n(x)m_{n+1}(x)-\lambda_{n+1}l_{n+1}(x)\left(\chi_{n-1}(a)\frac{\beta_n}{\lambda_{n}}-1\right)\right)\mathcal{G}_{n+1}^{Q}(x;a)\\
%	&+\huge(m_{n+1}(x)(c_{n+1}+\lambda_{n+1}-\chi_{n}(a)\beta_{n+1})-\lambda_{n+1}\_n(x)l_{n+1}(x))\huge\mathcal{G}_{n}^{Q}(x;a)
%\end{align*}
\end{proof}
%%%%%%%%%%%%%%%%%%%%%%%%%%%%%%%%%%%%%%%%%%%%%%%%%%%%%%%%%%%%%%%%%%%%%%%%%
When we subject the monic Geronimus polynomial to self-perturbation, the orthogonality condition is no longer preserved. However, despite this, we observe that it still satisfies the difference equation. To restore the full orthogonality of monic quasi-Geronimus polynomials of order one, we can impose conditions on $\beta_{n}$ in order to reduce the degree of coefficients in Theorem \ref{Difference equation for Quasi-Geronimus}. In such a way we get the recurrence parameters for monic quasi-Geronimus polynomials of order one that yields a TTRR.
\begin{proposition}\label{orthogonality of quasi-Geronimus}
	Let $\mathcal{G}_{n}^{Q}(x;a)$ be a monic quasi-Geronimus polynomial of order one with parameter $\beta_{n}$ such that
	\begin{align}\label{beta n restriction cond}
		\beta_{n}(c_{n+1}^g-c_n^g+\beta_{n}-\beta_{n+1})+\frac{\beta_{n}}{\beta_{n-1}}\lambda_{n}^g-\lambda_{n+1}^g=0, ~n\geq2.
	\end{align}
Then the polynomials $\mathcal{G}_{n}^{Q}(x;a)$ satisfy the TTRR
\begin{align*}
	\mathcal{G}_{n+1}^{Q}(x;a)-(x-c_{n+1}^{qg})\mathcal{G}_{n}^{Q}(x;a)+\lambda_{n+1}^{qg}\mathcal{G}_{n-1}^{Q}(x;a)=0, ~ n\geq 0,
\end{align*}
where the recurrence parameters are given by
\begin{align*}
	\lambda_{n+1}^{qg}=\frac{\beta_{n}}{\beta_{n-1}}\lambda_{n}^g, ~~~~~~~c_{n+1}^{qg}=c_{n+1}^g+\beta_{n}-\beta_{n+1}.
\end{align*}
If $\lambda_{n+1}^{qg}\neq 0$, then  $\{\mathcal{G}^Q_n(x;a)\}_{n=1}^{\infty}$ is an orthogonal polynomial sequence with respect to a quasi-definite linear functional. If  $\lambda_{n+1}^{qg} > 0$, then the corresponding linear functional is positive definite.
\end{proposition}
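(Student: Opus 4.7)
The strategy is straightforward linear algebra in the basis $\{\mathcal{G}_n(x;a)\}$, exploiting the fact that this basis does satisfy a TTRR (given by \eqref{Geronimus-TTRR}). The idea is to substitute the ansatz $\mathcal{G}_n^Q(x;a)=\mathcal{G}_n(x;a)+\beta_n\mathcal{G}_{n-1}(x;a)$ into the proposed TTRR, reduce everything to a linear combination of $\mathcal{G}_n,\mathcal{G}_{n-1},\mathcal{G}_{n-2}$ using \eqref{Geronimus-TTRR}, and match the coefficients to zero. Since these three Geronimus polynomials are linearly independent, we obtain three scalar relations, which will pin down $c_{n+1}^{qg}$ and $\lambda_{n+1}^{qg}$ and leave one compatibility condition, which is precisely \eqref{beta n restriction cond}.

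In detail, I would plug in $\mathcal{G}_n^Q$, $\mathcal{G}_{n\pm 1}^Q$ into the expression $\mathcal{G}_{n+1}^Q(x;a)-(x-c_{n+1}^{qg})\mathcal{G}_n^Q(x;a)+\lambda_{n+1}^{qg}\mathcal{G}_{n-1}^Q(x;a)$, use \eqref{Geronimus-TTRR} to eliminate $x\mathcal{G}_n(x;a)$ and $x\mathcal{G}_{n-1}(x;a)$, and rearrange. The coefficient of $\mathcal{G}_n(x;a)$ gives $c_{n+1}^{qg}=c_{n+1}^g+\beta_n-\beta_{n+1}$; the coefficient of $\mathcal{G}_{n-2}(x;a)$ gives $\lambda_{n+1}^{qg}=\tfrac{\beta_n}{\beta_{n-1}}\lambda_n^g$; and the coefficient of $\mathcal{G}_{n-1}(x;a)$ yields
\begin{equation*}
\lambda_{n+1}^{qg}=\lambda_{n+1}^g+\beta_n\bigl(c_n^g-c_{n+1}^{qg}\bigr).
\end{equation*}
Substituting the already-obtained expressions for $c_{n+1}^{qg}$ and $\lambda_{n+1}^{qg}$ into this last equation reduces it exactly to the constraint \eqref{beta n restriction cond}. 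Thus under that hypothesis the three-term relation holds identically.

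Once the TTRR is established with the prescribed coefficients, the orthogonality statement is immediate from Favard's theorem \cite[Theorem~4.4]{Chihara book}: a monic sequence defined by a TTRR with $\lambda_{n+1}^{qg}\neq 0$ is orthogonal with respect to some quasi-definite linear functional, and positive-definiteness follows whenever the $c_{n+1}^{qg}$ are real and the $\lambda_{n+1}^{qg}$ are positive (the reality of $c_{n+1}^{qg}$ being inherited from $c_{n+1}^g$ and the assumed real $\beta_n$).

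There is no serious obstacle here; the computation is mechanical. The only point requiring a little care is the bookkeeping in the reduction to $\{\mathcal{G}_n,\mathcal{G}_{n-1},\mathcal{G}_{n-2}\}$: because $(x-c_{n+1}^{qg})$ acts on $\beta_n\mathcal{G}_{n-1}$, it pushes the index down to $\mathcal{G}_{n-2}$, so three coefficients need to be matched rather than two. The fact that the number of free parameters $(c_{n+1}^{qg},\lambda_{n+1}^{qg})$ is two while the number of equations is three is exactly why a compatibility condition on $\beta_n$ must be imposed, and this is the content of \eqref{beta n restriction cond}.
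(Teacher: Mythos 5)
Your proposal is correct and follows essentially the same route as the paper: expand the candidate three-term relation in the basis $\{\mathcal{G}_{n+1},\mathcal{G}_n,\mathcal{G}_{n-1},\mathcal{G}_{n-2}\}$ using the Geronimus TTRR \eqref{Geronimus-TTRR}, equate the three coefficients to zero by linear independence to obtain $c_{n+1}^{qg}$, $\lambda_{n+1}^{qg}$ and the compatibility condition \eqref{beta n restriction cond}, and conclude with Favard's theorem. Your observation that two free parameters against three equations forces one constraint on $\beta_n$ is exactly the structure of the paper's argument.
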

\begin{proof}We simplify
	\begin{align*}
	&\mathcal{G}_{n+1}^{Q}(x;a)-(x-c_{n+1}^{qg})\mathcal{G}_{n}^{Q}(x;a)+\lambda_{n+1}^{qg}\mathcal{G}_{n-1}^{Q}(x;a)
	%&=\mathcal{G}_{n+1}(x;a)+\beta_{n+1}\mathcal{G}_{n}(x;a)-(x-c_{n+1}^{qg})\mathcal{G}_{n}(x;a)-\beta_{n}(x-c_{n+1}^{qg})\mathcal{G}_{n-1}(x;a)+\lambda_{n+1}^{qg}\mathcal{G}_{n-1}(x;a)+\lambda_{n+1}^{qg}\beta_{n-1}\mathcal{G}_{n-2}(x;a)\\
	=\mathcal{G}_{n+1}(x;a)\\
	&-(x-c_{n+1}^{qg}-\beta_{n+1})\mathcal{G}_{n}(x;a)-(\beta_{n}(x-c_{n+1}^{qg})-\lambda_{n+1}^{qg})\mathcal{G}_{n-1}(x;a)+\lambda_{n+1}^{qg}\beta_{n-1}\mathcal{G}_{n-2}(x;a).
\end{align*}
From the TTRR satisfied by Geronimus polynomials, we obtain
\begin{align*}
	&\mathcal{G}_{n+1}^{Q}(x;a)-(x-c_{n+1}^{qg})\mathcal{G}_{n}^{Q}(x;a)+\lambda_{n+1}^{qg}\mathcal{G}_{n-1}^{Q}(x;a)\\
	&=(c_{n+1}^{qg}-c_{n+1}^g+\beta_{n+1}-\beta_{n})\mathcal{G}_n(x;a)+(\beta_{n}c_{n+1}^{qg}+\lambda_{n+1}^{qg}-\lambda_{n+1}^{g}-\beta_{n}c_n^g)\mathcal{G}_{n-1}(x;a)\\
	&+(\lambda_{n+1}^{qg}\beta_{n-1}-\beta_{n}\lambda_{n}^g)\mathcal{G}_{n-2}(x;a).
	\end{align*}
 Since $\mathcal{G}_n(x;a)$, $\mathcal{G}_{n-1}(x;a)$ and $\mathcal{G}_{n-2}(x;a)$ are linearly independent, the left hand side is zero if and only if
 \begin{align*}
 	\beta_{n}(c_{n+1}^g-c_n^g+\beta_{n}-\beta_{n+1})+\frac{\beta_{n}}{\beta_{n-1}}\lambda_{n}^g-\lambda_{n+1}^g=0,
 \end{align*}
as well as
\begin{align*}
	\lambda_{n+1}^{qg}=\frac{\beta_{n}}{\beta_{n-1}}\lambda_{n}^g, ~~~~~~~c_{n+1}^{qg}=c_{n+1}^g+\beta_{n}-\beta_{n+1}.
\end{align*}
If $\lambda_{n+1}^{qg}\neq 0$, then from Favard's theorem there exists a quasi-definite  linear functional such that the sequence $\{\mathcal{G}^Q_n(x;a)\}_{n=1}^{\infty}$ becomes orthogonal. If $\lambda_{n+1}^{qg}> 0,$ then the linear functional is positive definite
\end{proof}
%%%%%%%%%%%%%%%%%%%%%%%%%%%%%%%%%%%%%%%%%%%%%%%%%%%%%%%%%%%%%%%
\subsection{An alternative representation of $\beta_{n}$}\label{Different representation of beta n} As observed in Proposition \ref{orthogonality of quasi-Geronimus}, the restrictions on the parameters $\beta_{n}$ play a crucial role in achieving the orthogonality of quasi-Geronimus polynomials. Therefore, exploring alternative representations of $\beta_{n}$ is worthwhile.
\begin{itemize}
	\item[1.] We have $\lambda_{n+1}^{qg}=\frac{\beta_{n}}{\beta_{n-1}}\lambda_{n}^g$. Multiplying the $n$ copies of these equations we get
	\begin{equation*}
		\prod_{k=1}^{n}\lambda_{k+1}^{qg}=\prod_{k=1}^{n}\frac{\beta_{k}}{\beta_{k-1}}\lambda_{k}^g,
	\end{equation*}
hence by \cite[Theorem 4.2]{Chihara book}, we have $\mathcal{L}^{QG}[(\mathcal{G}^Q_n(x;a))^2]=\lambda_{1}^{qg}\frac{\beta_{n}}{\beta_{0}}\mathcal{L}^G[(\mathcal{G}_{n-1}(x;a))^2]$. So we can write
\begin{equation*}
	\beta_{n}=\frac{\beta_{0}}{\lambda_{1}^{qg}}\frac{\mathcal{L}^{QG}[(\mathcal{G}^Q_n(x;a))^2]}{\mathcal{L}^G[(\mathcal{G}_{n-1}(x;a))^2]}.
\end{equation*}
Note that if $\beta_{0}=0,$ then $\beta_{n}=0$ for each $n\in \mathbb{N}$. Therefore $\beta_{0}\neq0$.
\item[2.] We have $c_{n+1}^{qg}=c_{n+1}^g+\beta_{n}-\beta_{n+1}$. Adding the $n$ copies of these equations we get
\begin{equation*}
	\sum_{k=0}^{n-1}c_{k+1}^{qg}=\sum_{k=0}^{n-1}c_{k+1}^g+\beta_{k}-\beta_{k+1}.
\end{equation*}
Hence by \cite[Theorem 4.2]{Chihara book}, we have
\begin{equation*}
	\beta_{n}=\beta_{0}-\text{coefficient of}~x^{n-1}~\text{in}~\mathcal{G}_n(x;a)+ \text{coefficient of}~x^{n-1}~\text{in}~\mathcal{G}^Q_n(x;a).
\end{equation*}
\item[3.] We can write (\ref{beta n restriction cond} ) as
\begin{align*}
	\beta_{n}-\beta_{n+1}+\frac{\lambda_{n}^g}{\beta_{n-1}}-\frac{\lambda_{n+1}^g}{\beta_{n}}=c_n^g-c_{n+1}^g.
\end{align*}
Adding the $n-1$ copies of the above equation, we get
\begin{align*}
	\beta_{2}-\beta_{n+1}+\frac{\lambda_{2}^g}{\beta_{1}}-\frac{\lambda_{n+1}^g}{\beta_{n}}&=c_2^g-c_{n+1}^g,\\
	\beta_{n+1}+\frac{\lambda_{n+1}^g}{\beta_{n}}&=C^{(1)}+c_{n+1}^g,\\
	\beta_{n}&=\frac{\lambda_{n+1}^g}{C^{(1)}+c_{n+1}^g-\beta_{n+1}},
\end{align*}
where $C^{(1)}=\beta_{2}+\frac{\lambda_{2}^g}{\beta_{1}}-c_2^g$. We can write $\beta_{n}$ in terms of a continued fraction
\begin{align}\label{Stieltjes CF of  beta n}
	\frac{\beta_{n}}{\lambda_{n+1}^g} = \frac{1}{C^{(1)}+c_{n+1}^g}\fminus\frac{\lambda_{n+2}^g}{C^{(1)}+c_{n+2}^g}\fminus\frac{\lambda_{n+3}^g}{C^{(1)}+c_{n+2}^g}\fminus\frac{\lambda_{n+3}^g}{C^{(1)}+c_{n+3}^g}\fminus\cdots
\end{align}
Note that for every fixed value of $n$, (\ref{Stieltjes CF of beta n}) is  a Stieltjes continued fraction. Following \cite[Equation 6.6]{Ismail_2019_quasi-orthogonal}, we obtain a sequence of orthogonal polynomials with respect to a measure $\tilde{\mu}^{(n)}$ for a fixed value of $n$ associated with the continued fraction (\ref{Stieltjes CF of beta n}). Therefore, expressing the continued fraction in terms of the Stieltjes transform of the measure $\tilde{\mu}^{(n)}$, we have
\begin{align}\label{Stieltjes integral for beta n}
\beta_{n}=\lambda_{n+1}^g\int_{-\infty}^{\infty}\frac{d\tilde{\mu}^{(n)}(z)}{C^{(1)}-z},\hspace{0.5cm}C^{(1)}\in \mathbb{C}\backslash supp(\tilde{\mu}).
\end{align}

\end{itemize}
%%%%%%%%%%%%%%%%%%%%%%%%%%%%%%%%%%%%%%%%%%%%%%%%%%%%%%%%%%%%%%%%%%%%%%%%
 We have started with the sequence of monic orthogonal polynomials $\{P_n(x)\}_{n=0}^{\infty}$ with respect to the quasi-definite linear functional $\mathcal{L}$. Linear spectral transformations of $\mathcal{L}$ yield several sequences of orthogonal polynomials. From the Geronimus polynomials, we introduce the concept of quasi-Geronimus polynomial of order one. In this process, the orthogonality condition for the quasi-Geronimus polynomial of order one is relaxed. However, the previous result indicates that orthogonality can still be achieved with a suitable sequence of the constants $\beta_{n}$ in the definition the monic quasi-Geronimus polynomials.

  Since it may not be possible to get orthogonality for monic quasi-Geronimus polynomials for any choice on $\beta_{n}$, the next theorem proves that, even without specific conditions on $\beta_n$, we can recover the source orthogonal polynomial $P_n(x)$ from the monic quasi-Geronimus polynomials of order one. This recovery is achieved using different polynomials generated by spectral transformations, and the theorem specifically uses monic Geronimus polynomials for this purpose.
	\begin{theorem} Let $\mathcal{G}_{n}^{Q}(x;a_1)$ be a monic quasi-Geronimus polynomial of order one for some $a_1\in\mathbb{R}$. Let $\{\mathcal{G}_{n}(x;a_2)\}_{n=0}^{\infty}$ be a sequence of monic orthogonal polynomials with respect to $\mathcal{L}^G$ at $a_2\in\mathbb{R}$. Then there exist sequences of real numbers  $\{\gamma_n\}_{n=0}^{\infty}$ and $\{\eta_n\}_{n=0}^{\infty}$  such that
		\begin{align*}
			P_{n}(x)=\frac{1}{(x-\gamma_n)}\mathcal{G}_{n+1}^{Q}(x;a_1)+\frac{\eta_n}{(x-\gamma_n)}\mathcal{G}_n(x;a_2), ~n\geq0.
		\end{align*}
	\end{theorem}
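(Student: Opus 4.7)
The natural plan is to clear the denominator and match coefficients in the basis $\{P_{n-1}(x),P_n(x),P_{n+1}(x)\}$. The identity to establish is equivalent to
\begin{align*}
(x-\gamma_n)P_n(x)=\mathcal{G}_{n+1}^{Q}(x;a_1)+\eta_n\,\mathcal{G}_n(x;a_2), \quad n\geq 0.
\end{align*}
The right-hand side is a monic polynomial of degree $n+1$ (coming from $\mathcal{G}_{n+1}^Q$), matching the left-hand side, so only two coefficients are free, which matches the two unknowns $\gamma_n,\eta_n$.

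The first step is to expand the left-hand side using the TTRR for $\{P_n(x)\}$, giving
\begin{align*}
(x-\gamma_n)P_n(x)=P_{n+1}(x)+(c_{n+1}-\gamma_n)P_n(x)+\lambda_{n+1}P_{n-1}(x).
\end{align*}
Next I would expand the right-hand side using \eqref{Non monic Quasi-Geronimus order 1} and \eqref{Geronimus polynomial}: writing
$\mathcal{G}_{n+1}^{Q}(x;a_1)=\mathcal{G}_{n+1}(x;a_1)+\beta_{n+1}\mathcal{G}_n(x;a_1)$ and
$\mathcal{G}_k(x;a)=P_k(x)+\chi_k(a)P_{k-1}(x)$ produces
\begin{align*}
\mathcal{G}_{n+1}^{Q}(x;a_1)+\eta_n\mathcal{G}_n(x;a_2)=P_{n+1}(x)+(\chi_{n+1}(a_1)+\beta_{n+1}+\eta_n)P_n(x)+(\beta_{n+1}\chi_n(a_1)+\eta_n\chi_n(a_2))P_{n-1}(x).
\end{align*}

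Matching the coefficients of $P_n(x)$ and $P_{n-1}(x)$, which are legitimate because $\{P_0(x),\dots,P_{n+1}(x)\}$ is linearly independent, yields the linear system
\begin{align*}
c_{n+1}-\gamma_n=\chi_{n+1}(a_1)+\beta_{n+1}+\eta_n,\qquad \lambda_{n+1}=\beta_{n+1}\chi_n(a_1)+\eta_n\chi_n(a_2),
\end{align*}
which I would solve explicitly as
\begin{align*}
\eta_n=\frac{\lambda_{n+1}-\beta_{n+1}\chi_n(a_1)}{\chi_n(a_2)},\qquad \gamma_n=c_{n+1}-\chi_{n+1}(a_1)-\beta_{n+1}-\eta_n.
\end{align*}

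The only genuine subtlety is the well-posedness of the division: one needs $\chi_n(a_2)\neq 0$, which follows from the quasi-definiteness hypothesis for $\mathcal{L}^G$ at $a_2$ (the denominator in the definition of $\chi_n(a_2)$ is nonzero precisely under that assumption). The boundary case $n=0$ collapses to a single constant-term equation (since $P_{-1}(x)=0$), so $\eta_0$ can be chosen freely and $\gamma_0$ is then determined; this is consistent but should be stated. Beyond this, the main obstacle is essentially bookkeeping rather than conceptual: the derivation mirrors the earlier Lemma \ref{Original in terms of quasi Geronimus}, but uses a \emph{different} spectral transformation parameter $a_2$ for the second term so that the recovery formula involves two genuinely independent Geronimus perturbations.
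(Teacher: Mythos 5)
Your proposal is correct and follows essentially the same route as the paper: both reduce the claim to the identity $(x-\gamma_n)P_n(x)=\mathcal{G}_{n+1}^{Q}(x;a_1)+\eta_n\mathcal{G}_n(x;a_2)$, expand everything in the basis $\{P_{n-1},P_n,P_{n+1}\}$ via \eqref{Geronimus polynomial} and the TTRR, and solve the resulting two-equation system to obtain exactly the same $\eta_n$ and $\gamma_n$. Your added remarks on the nonvanishing of $\chi_n(a_2)$ (guaranteed by quasi-definiteness of $\mathcal{L}^G$ at $a_2$) and on the degenerate $n=0$ case are points the paper leaves implicit, but they do not change the argument.
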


\begin{proof} Consider
	\begin{align*}
		\mathcal{G}_n^G(a_1,a_2;x)&=\frac{1}{(x-\gamma_n)}\mathcal{G}_{n+1}^{Q}(x;a_1)+\frac{\eta_n}{(x-\gamma_n)}\mathcal{G}_n(x;a_2)\\
		&=\frac{1}{x-\gamma_n}\left[\mathcal{G}_{n+1}^{Q}(x;a_1)-(x-\gamma_n)P_n(x)+\eta_n\mathcal{G}_n(x;a_2)\right]+P_n(x).
	\end{align*}

Notice that
	\begin{align*}
		&\mathcal{G}_{n+1}^{Q}(x;a_1)-(x-\gamma_n)P_n(x)+\eta_n\mathcal{G}_n(x;a_2)\\&=\mathcal{G}_{n+1}(x;a_1)+\beta_{n+1}\mathcal{G}_n(x;a_1)-(x-\gamma_n)P_n(x)+\eta_nP_n(x)+\chi_n(a_2)\eta_nP_{n-1}(x)\\
		&=P_{n+1}(x)+\chi_{n+1}(a_1)P_n(x)+\beta_{n+1}P_n(x)+\beta_{n+1}\chi_n(a_1)P_{n-1}(x)-(x-\gamma_n)P_n(x)\\
		&\hspace{7cm}+\eta_nP_n(x)+\chi_n(a_2)P_{n-1}(x)\\
		&=P_{n+1}(x)-(x-\gamma_n-\chi_{n+1}(a_1)-\beta_{n+1}-\eta_n)P_n(x)+(\beta_{n+1}\chi_n(a_1)+\eta_n\chi_n(a_2))P_{n-1}(x).
	\end{align*}
If we choose
\begin{align*}
	\eta_n=\frac{\lambda_{n+1}-\beta_{n+1}\chi_n(a_1)}{\chi_n(a_2)},
\end{align*}
and
\begin{align*}
	\gamma_n=c_{n+1}-\chi_{n+1}(a_1)-\beta_{n+1}-\frac{\lambda_{n+1}-\beta_{n+1}\chi_n(a_1)}{\chi_n(a_2)},
\end{align*}

then from the TTRR satisfied by the polynomials $P_{n}(x)$ we get the desired result.
%$\displaystyle{c_{n+1}=\gamma_n+\chi_{n+1}(a_1)+\beta_{n+1}+\eta_n}$ and $\displaystyle{\lambda_{n+1}=\beta_{n+1}\chi_n(a_1)+\eta_n\chi_n(a_2)}$.
\end{proof}
%%%%%%%%%%%%%%%%%%%%%%%%%%%%%%%%%%%%%%%%%%%%%%%%%

The orthogonal polynomial $P_n(x)$ is obtained from the monic quasi-Geronimus polynomial of order one through a linear combination with the polynomials generated by Uvarov transformation. The process, as detailed in Theorem \ref{Recovery from quasi Geronimus and Uvarov}, highlights the necessity of three sequences of constants for the recovery of orthogonal polynomials.

\begin{theorem}\label{Recovery from quasi Geronimus and Uvarov}Let $\mathcal{G}_{n}^{Q}(x;a_1)$ be a quasi-Geronimus polynomial of order one for some $a_1\in\mathbb{R}$. Suppose $\{\mathcal{U}_{n}(x;a_2)\}_{n=0}^{\infty}$ be a sequence of monic Uvarov polynomials with respect to $\mathcal{L}^U$ at $a\in\mathbb{R}$. Then there exist sequences $\{\zeta_n\}_{n=0}^{\infty}$, $\{\gamma_n\}_{n=0}^{\infty}$ and $\{\eta_n\}_{n=0}^{\infty}$ such that
	\begin{align*}
	P_{n} (x)=	\frac{1}{\zeta_n(x-\eta_n)}\mathcal{G}_{n+1}^{Q}(x;a_1)+\frac{\gamma_n(x-a_2)}{\zeta_n(x-\eta_n)}\mathcal{U}_n(x;a_2), ~n\geq 0.
	\end{align*}
\end{theorem}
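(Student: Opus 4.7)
The plan is to mimic the proof of the preceding theorem: expand both terms on the right-hand side in the basis $\{P_{n+1}(x),P_n(x),P_{n-1}(x)\}$, use the TTRR to eliminate $P_{n+1}(x)$, and then choose the three parameters so that the combination collapses to a scalar multiple of a linear factor times $P_n(x)$.

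First, from \eqref{Non monic Quasi-Geronimus order 1} and \eqref{Geronimus polynomial} I write
\[
\mathcal{G}_{n+1}^{Q}(x;a_1) = P_{n+1}(x)+(\chi_{n+1}(a_1)+\beta_{n+1})P_n(x)+\beta_{n+1}\chi_n(a_1)P_{n-1}(x).
\]
Next, using the definition of $\mathcal{U}_n(x;a_2)$ together with \eqref{kernel} for $\mathcal{C}_{n-1}(x;a_2)$, multiplication by $(x-a_2)$ gives
\[
(x-a_2)\mathcal{U}_n(x;a_2) = (x-a_2-t_n)P_n(x)+t_n\frac{P_n(a_2)}{P_{n-1}(a_2)}P_{n-1}(x).
\]
This is the crucial trick: the factor $(x-a_2)$ removes the denominator in the kernel polynomial, leaving a clean two-term $P$-expansion.

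I then form $\mathcal{G}_{n+1}^{Q}(x;a_1)+\gamma_n(x-a_2)\mathcal{U}_n(x;a_2)$ and substitute $P_{n+1}(x)=(x-c_{n+1})P_n(x)-\lambda_{n+1}P_{n-1}(x)$ to kill the $P_{n+1}$ term. Collecting coefficients yields an expression of the form $A_n(x)P_n(x)+B_nP_{n-1}(x)$ with
\[
A_n(x) = (1+\gamma_n)x+\bigl[-c_{n+1}+\chi_{n+1}(a_1)+\beta_{n+1}-\gamma_n(a_2+t_n)\bigr],
\]
\[
B_n = \beta_{n+1}\chi_n(a_1)+\gamma_n t_n\frac{P_n(a_2)}{P_{n-1}(a_2)}-\lambda_{n+1}.
\]
I force $B_n=0$, which determines
\[
\gamma_n = \frac{(\lambda_{n+1}-\beta_{n+1}\chi_n(a_1))P_{n-1}(a_2)}{t_nP_n(a_2)},
\]
and then read off $\zeta_n:=1+\gamma_n$ and $\eta_n:=\bigl[c_{n+1}-\chi_{n+1}(a_1)-\beta_{n+1}+\gamma_n(a_2+t_n)\bigr]/\zeta_n$, so that $A_n(x)=\zeta_n(x-\eta_n)$. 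Dividing through by $\zeta_n(x-\eta_n)$ then yields the identity claimed.

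The only real obstacle is ensuring that all the divisions are legitimate. Quasi-definiteness of $\mathcal{L}^U$ guarantees $t_n$ is well-defined and nonzero (since $M\neq 0$ and $1+M\mathcal{K}_{n-1}(a_2,a_2)\neq 0$), while quasi-definiteness of $\mathcal{L}$ relative to $a_2$ provides $P_n(a_2),P_{n-1}(a_2)\neq 0$, so $\gamma_n$ is meaningful; the nondegeneracy condition $\zeta_n=1+\gamma_n\neq 0$ is the only genuinely new algebraic restriction and is inherent to the recovery. Beyond that, the argument is purely algebraic bookkeeping, structurally identical to the Geronimus-Geronimus case preceding it.
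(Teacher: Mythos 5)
Your proof is correct and takes essentially the same route as the paper: expand both terms in the basis $\{P_{n+1},P_n,P_{n-1}\}$ using the identity $(x-a_2)\mathcal{U}_n(x;a_2)=(x-a_2-t_n)P_n(x)+t_n\tfrac{P_n(a_2)}{P_{n-1}(a_2)}P_{n-1}(x)$, invoke the TTRR, and determine the three constants from linear independence (your "eliminate $P_{n+1}$ first, then factor $A_n(x)=\zeta_n(x-\eta_n)$" is just a reorganization of the paper's "set the coefficients of $P_{n+1},P_n,P_{n-1}$ to zero"). Note that your value of $\gamma_n$, with $\lambda_{n+1}-\beta_{n+1}\chi_n(a_1)$ in the numerator, is actually the correct one: the paper's printed $\gamma_n$ carries $c_{n+1}$ in its place, an apparent slip in which the $c_{n+1}$ and $\lambda_{n+1}$ contributions of $xP_n=P_{n+1}+c_{n+1}P_n+\lambda_{n+1}P_{n-1}$ were interchanged.
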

\begin{proof}%If there exists a sequence $\{\zeta_n\}$, $\{\gamma_n\}$ and $\{\eta_n\}$ such that the sequence of polynomials $\{{\mathcal{G}}_n^{U}(a_1,a_2;x)\}_{n=0}^\infty$ is orthogonal with respect to the linear functional $\mathcal{L}$, then by uniqueness theorem of orthogonal polynomials, $\{{\mathcal{G}}_n^{U}(a_1,a_2;x)\}_{n=0}^\infty$ and $\{\mathcal{P}_n(x)\}_{n=0}^{\infty}$ are the same system of orthogonal polynomials and vice-versa.
Let consider
	\begin{align*}
		\mathcal{G}_n^U(a_1,a_2;x)&=	\frac{1}{\zeta_n(x-\eta_n)}\mathcal{G}_{n+1}^{Q}(x;a_1)+\frac{\gamma_n(x-a_2)}{\zeta_n(x-\eta_n)}\mathcal{U}_n(x;a_2)\\
		&=\frac{1}{\zeta_n(x-\eta_n)}\left[\mathcal{G}_{n+1}^{Q}(x;a_1)-\zeta_n(x-\eta_n)P_n(x)+\gamma_n(x-a_2)\mathcal{U}_n(x;a_2)\right]+ P_n(x).
	\end{align*}
Thus
	\begin{align*}
		\mathcal{G}_{n+1}^{Q}(x;a_1)&-\zeta_n(x-\eta_n)P_n(x)+\gamma_n(x-a_2)\mathcal{U}_n(x;a_2)\\
		&=P_{n+1}(x)+(\chi_{n+1}(a_1)+\beta_{n+1})P_n(x)+\beta_{n+1}\chi_n(a_1)P_{n-1}(x)-\zeta_n(x-\eta_n)P_n(x)\\
		&+\gamma_n(x-a_2)P_n(x)-\gamma_ns_nP_n(x)+\gamma_ns_n\frac{P_n(a_2)}{P_{n-1}(a_2)}P_{n-1}(x).
	\end{align*}
Using the TTRR satisfied by $P_n(x)$ and combining the coefficients of $P_{n-1}$, $P_n$ and $P_{n+1}$, we can write the right hand side of the above equation as
\begin{align*}
	&\mathcal{G}_{n+1}^{Q}(x;a_1)-\zeta_n(x-\eta_n)P_n(x)+\gamma_n(x-a_2)\mathcal{U}_n(x;a_2)\\=
	&(1+\zeta_n+\gamma_n)P_{n+1}(x)+(\chi_{n+1}(a_1)+\beta_{n+1}-\zeta_n\lambda_{n+1}+\gamma_n\lambda_{n+1}-\gamma_ns_n+\zeta_n\eta_n-\gamma_na_2)P_{n}(x)\\
	&\left(\beta_{n+1}\chi_n(a_1)-\zeta_nc_{n+1}+\gamma_nc_{n+1}+\gamma_ns_n\frac{P_n(a_2)}{P_{n-1}(a_2)}\right)P_{n-1}(x).
\end{align*}
By setting the above equation equals zero and since $P_{n-1}$, $P_n$ and $P_{n+1}$ are linearly independent, the above equation vanishes if we choose the coefficients $\gamma_n$, $\zeta_n$ and $\eta_n$ as follows
\begin{align*}
	&\gamma_n=\left(c_{n+1}-\chi_n(a_1)\beta_{n+1}\right)\frac{P_{n-1}(a_2)}{s_nP_n(a_2)},\\
	&\zeta_n=1+\left(c_{n+1}-\chi_n(a_1)\beta_{n+1}\right)\frac{P_{n-1}(a_2)}{s_nP_n(a_2)}
\end{align*}
and
\begin{align*}
	\eta_n=\frac{1}{\zeta_n}\left[\lambda_{n+1}+(s_n+a_2)\left(c_{n+1}-\chi_n(a_1)\beta_{n+1}\right)\frac{P_{n-1}(a_2)}{s_nP_n(a_2)}-\beta_{n+1}-\chi_{n+1}(a_1)\right].
\end{align*}
%\begin{align*}
%	\eta_n=\frac{1}{\zeta_n}\left[\zeta_n\lambda_{n+1}+(s_n+a_2-\lambda_{n+1})\gamma_n-\beta_{n+1}\chi_{n+1}(a_1)\right]
%\end{align*}
This completes the proof.
\end{proof}
%%%%%%%%%%%%%%%%%%%%%%%%%%%%%%%%%%%%%%%%%%%%%%%%%%%%%%
We recover the orthogonal polynomials $P_n(x)$ from the quasi-Geronimus polynomial of order one using polynomials generated by Geronimus and Uvarov transformations. In the subsequent theorem, it is also shown that obtaining the orthogonal polynomials from the Christoffel transformation is feasible.
\begin{theorem}Let $\mathcal{G}_{n}^{Q}(x;a_1)$ be a monic quasi-Geronimus polynomial of order one for some $a_1\in\mathbb{R}$. Let assume that $\{\mathcal{C}_{n}(x;a_2)\}_{n=0}^{\infty}$ is a sequence of kernel polynomials with respect to Christoffel transformation $\mathcal{L}^C$ at $a_2\in\mathbb{R}$. Then there exist sequences $\{\gamma_n\}_{n=0}^{\infty}$ and $\{\eta_n\}_{n=0}^{\infty}$ such that
	\begin{align*}
	P_{n}(x)=	\frac{1}{x-\eta_n}\mathcal{G}_{n+1}^Q(x;a_1)+\frac{\gamma_n(x-a_2)}{x-\eta_n}\mathcal{C}_{n-1}(x;a_2), ~n\geq 0.
	\end{align*}

\end{theorem}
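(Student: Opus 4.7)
The plan is to mimic the two preceding recovery theorems almost verbatim: start from the proposed right-hand side, add and subtract $P_n(x)$, and determine the two free parameters $\gamma_n$ and $\eta_n$ by forcing the leftover to vanish identically. Concretely, I would set
$$\mathcal{G}_n^C(a_1,a_2;x) := \frac{1}{x-\eta_n}\mathcal{G}_{n+1}^Q(x;a_1) + \frac{\gamma_n(x-a_2)}{x-\eta_n}\mathcal{C}_{n-1}(x;a_2)$$
and rewrite it as $P_n(x) + \frac{1}{x-\eta_n}R_n(x)$, where $R_n(x) := \mathcal{G}_{n+1}^Q(x;a_1) - (x-\eta_n)P_n(x) + \gamma_n(x-a_2)\mathcal{C}_{n-1}(x;a_2)$. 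It then suffices to choose $\gamma_n$ and $\eta_n$ so that $R_n \equiv 0$.

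To expand $R_n$, I would use three ingredients already available. First, $\mathcal{G}_{n+1}^Q(x;a_1) = \mathcal{G}_{n+1}(x;a_1) + \beta_{n+1}\mathcal{G}_n(x;a_1)$ together with \eqref{Geronimus polynomial} gives $\mathcal{G}_{n+1}^Q(x;a_1) = P_{n+1}(x) + (\chi_{n+1}(a_1)+\beta_{n+1})P_n(x) + \beta_{n+1}\chi_n(a_1)P_{n-1}(x)$. Second, the definition \eqref{kernel} of the kernel polynomial yields the convenient identity $(x-a_2)\mathcal{C}_{n-1}(x;a_2) = P_n(x) - \frac{P_n(a_2)}{P_{n-1}(a_2)}P_{n-1}(x)$. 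Third, the TTRR for $P_n(x)$ gives $(x-\eta_n)P_n(x) = P_{n+1}(x) + (c_{n+1}-\eta_n)P_n(x) + \lambda_{n+1}P_{n-1}(x)$. Substituting these, the $P_{n+1}$ contributions cancel, leaving
$$R_n(x) = \bigl[\chi_{n+1}(a_1)+\beta_{n+1}+\eta_n-c_{n+1}+\gamma_n\bigr]P_n(x) + \Bigl[\beta_{n+1}\chi_n(a_1)-\lambda_{n+1}-\gamma_n\tfrac{P_n(a_2)}{P_{n-1}(a_2)}\Bigr]P_{n-1}(x).$$

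Since $\{P_n(x),P_{n-1}(x)\}$ is linearly independent, setting both brackets to zero yields the explicit formulas
$$\gamma_n = \frac{P_{n-1}(a_2)}{P_n(a_2)}\bigl(\beta_{n+1}\chi_n(a_1)-\lambda_{n+1}\bigr), \qquad \eta_n = c_{n+1}-\chi_{n+1}(a_1)-\beta_{n+1}-\gamma_n,$$
and the identity follows by clearing the denominator $x-\eta_n$ to obtain the polynomial equality $(x-\eta_n)P_n(x) = \mathcal{G}_{n+1}^Q(x;a_1) + \gamma_n(x-a_2)\mathcal{C}_{n-1}(x;a_2)$.

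There is no serious conceptual obstacle; the argument is structurally identical to the proofs of the two preceding theorems, and the only genuine pitfall is bookkeeping. One must carefully keep the quasi-perturbation coefficient $\beta_{n+1}$ distinct from the Geronimus parameter $\chi_k(a_1)$ and avoid confusing $a_1$ with $a_2$, since they play different roles. Well-definedness of $\gamma_n$ requires $P_n(a_2)\neq 0$ for all $n$, which is precisely the quasi-definiteness hypothesis on $\mathcal{L}^C$ at $a_2$; and the division by $x-\eta_n$ is admissible because the polynomial identity above carries no denominator.
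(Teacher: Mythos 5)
Your proof is correct and follows essentially the same route as the paper: the same ansatz $\mathcal{G}_n^C(a_1,a_2;x)=P_n(x)+\frac{1}{x-\eta_n}R_n(x)$, the same three expansions (quasi-Geronimus, kernel, TTRR), and the same determination of $\gamma_n,\eta_n$ from the linear independence of $P_n$ and $P_{n-1}$. In fact your bookkeeping is the more careful one: the coefficient of $P_{n-1}(x)$ in $\mathcal{G}_{n+1}^{Q}(x;a_1)$ is indeed $\beta_{n+1}\chi_{n}(a_1)$, so your formula for $\gamma_n$ is the consistent one, whereas the paper's displayed $\gamma_n$ carries $\chi_{n+1}(a_1)$, an index slip.
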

\begin{proof}Let consider
	\begin{align*}
		\mathcal{G}_n^C(a_1,a_2;x)&=	\frac{1}{x-\eta_n}\mathcal{G}_{n+1}^Q(x;a_1)+\frac{\gamma_n(x-a_2)}{x-\eta_n}\mathcal{C}_{n-1}(x;a_2)\\
		&=\frac{1}{x-\eta_n}\left[\mathcal{G}_{n+1}^Q(x;a_1)-(x-\eta_n)P_n(x)+\gamma_n(x-a_2)\mathcal{C}_{n-1}(x;a_2)\right]+P_n(x).
	\end{align*}

Then
	\begin{align*}
		&\mathcal{G}_{n+1}^Q(x;a_1)-(x-\eta_n)P_n(x)+\gamma_n(x-a_2)\mathcal{C}_{n-1}(x;a_2)\\&=P_{n+1}(x)+\chi_{n+1}(a_1)P_n(x)+\beta_{n+1}P_n(x)+\beta_{n+1}\chi_n(a_1)P_{n-1}(x)\\
		&\hspace{6.5cm}-(x-\eta_n)P_n(x)+\gamma_nP_n(x)-\gamma_n\frac{P_n(a_2)}{P_{n-1}(a_2)}P_{n-1}(x)\\
		&=P_{n+1}(x)-(x-\eta_n-\gamma_n-\beta_{n+1}-\chi_{n+1}(a_1))P_n(x)\\
		&\hspace{8cm}+\left(\beta_{n+1}\chi_{n+1}(a_1)-\gamma_n\frac{P_n(a_2)}{P_{n-1}(a_2)}\right)P_{n-1}(x).
	\end{align*}
If we choose  $\gamma_n$ and $\eta_n$ as
\begin{align*}
	\gamma_n=\left(\beta_{n+1}\chi_{n+1}(a_1)-\lambda_{n+1}\right)\frac{P_{n-1}(a_2)}{P_n(a_2)},
\end{align*}
and
\begin{align*}
	\eta_n=c_{n+1}-\beta_{n+1}-\chi_{n+1}(a_1)+\left(\lambda_{n+1}-\beta_{n+1}\chi_{n+1}(a_1)\right)\frac{P_{n-1}(a_2)}{P_n(a_2)},
\end{align*}

 taking into account the TTRR that the polynomials $P_{n}(x)$ satisfy, then the result follows.
\end{proof}
%%%%%%%%%%%%%%%%%%%%%%%%%%%%%%%%%%%%%%%%%%%%%%%%%%%
\section{Recovery from quasi-Uvarov polynomial of order one}\label{Rec: quasi-Uvarov and recovery}
This section deals with the self-perturbation of Uvarov polynomials. We discuss the difference equation satisfied by the so-called quasi-Uvarov polynomial of order one, as well as its orthogonality. The section concludes by obtaining the source orthogonal polynomials from the quasi-Uvarov polynomial of order one.
%	Suppose $\mathcal{L}^U$ be the Uvarov transformation of $\mathcal{L}$ at $a$. Let $\{\mathcal{U}_n(x;a)\}_{n=0}^{\infty}$ be the sequence of Uvarov polynomials which forms an orthogonal polynomials system with respect to  $\mathcal{L}^U$.
	\begin{definition}
 A polynomial $p$ is said to be quasi-Uvarov polynomial of order one if it is of degree at most $n$ and satisfies
	\begin{align*}
		\mathcal{L}^U(x^kp(x))=0~~for~~ k=0,1,2,...,n-2.
	\end{align*}
\end{definition}
Since $\{\mathcal{U}_n(x;a)\}_{n=0}^{\infty}$ is a sequence of polynomials orthogonal with respect to $\mathcal{L}^U$, it is straightforward to observe that the monic Uvarov polynomials of degree $n+1$ and $n$ are monic quasi-Uvarov polynomials of order one. The subsequent result characterizes the quasi-Uvarov polynomial of order one as a self-perturbation of monic Uvarov polynomials.
%%%%%%%%%%%%%%%%%%%%%%%%%%%%%%%%%%%%%%%%%%%%%%%%%%%%
\begin{lemma}
	A polynomial $\mathcal{U}_n^Q(x;a)$ is a quasi-Uvarov polynomial of degree at most $n$ and order one if and only if $\mathcal{U}_n^Q(x;a)$ can be written as
	\begin{align}\label{Non monic Quasi-Uvarov order 1}
			\mathcal{U}_n^Q(x;a)= \alpha_{n, n}\mathcal{U}_n(x;a)+\alpha_{n-1,n}\mathcal{U}_{n-1}(x;a),
	\end{align}
	where  $\alpha_{n-1,n}$ and $\alpha_{n,n}$ cannot be zero simultaneously.
\end{lemma}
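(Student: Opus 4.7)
The plan is to proceed by a standard orthogonal basis expansion argument, paralleling the characterization of classical quasi-orthogonal polynomials referenced in Chihara's book which underlies the analogous quasi-Geronimus lemma earlier in the paper. First, I would observe that $\{\mathcal{U}_j(x;a)\}_{j=0}^{\infty}$, being a sequence of monic polynomials of successive degrees, forms a basis of the space of polynomials, so any polynomial $p$ of degree at most $n$ admits a unique expansion
\begin{align*}
p(x)=\sum_{j=0}^{n}\alpha_{j,n}\,\mathcal{U}_j(x;a).
\end{align*}

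For the forward direction, I would translate the defining condition $\mathcal{L}^U(x^k p(x))=0$ for $k=0,1,\ldots,n-2$ into the equivalent statement that $\mathcal{L}^U(q(x)p(x))=0$ for every polynomial $q$ of degree at most $n-2$, using that $\{1,x,\ldots,x^{n-2}\}$ and $\{\mathcal{U}_0,\mathcal{U}_1,\ldots,\mathcal{U}_{n-2}\}$ span the same subspace. Choosing in turn $q(x)=\mathcal{U}_j(x;a)$ for $j=0,1,\ldots,n-2$ and invoking the orthogonality $\mathcal{L}^U(\mathcal{U}_i\mathcal{U}_j)=0$ for $i\neq j$ together with the quasi-definiteness of $\mathcal{L}^U$ (which guarantees $\mathcal{L}^U((\mathcal{U}_j)^2)\neq 0$ for every $j$), the cross terms collapse and one is left with $\alpha_{j,n}\mathcal{L}^U((\mathcal{U}_j)^2)=0$, forcing $\alpha_{j,n}=0$ for $j=0,1,\ldots,n-2$. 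Only the two top coefficients $\alpha_{n-1,n}$ and $\alpha_{n,n}$ survive, which yields the representation \eqref{Non monic Quasi-Uvarov order 1}.

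The converse direction is essentially automatic: if $\mathcal{U}_n^Q(x;a)$ has the stated form, then for any $k\in\{0,1,\ldots,n-2\}$ the functional $\mathcal{L}^U(x^k \mathcal{U}_n^Q(x;a))$ splits into $\alpha_{n,n}\mathcal{L}^U(x^k\mathcal{U}_n)+\alpha_{n-1,n}\mathcal{L}^U(x^k\mathcal{U}_{n-1})$, and both summands vanish by orthogonality of $\mathcal{U}_n$ and $\mathcal{U}_{n-1}$ against polynomials of degree strictly smaller than their own. The side condition that $\alpha_{n-1,n}$ and $\alpha_{n,n}$ cannot vanish simultaneously simply encodes the fact that $\mathcal{U}_n^Q(x;a)$ is a nonzero polynomial; if both were zero the leading coefficient analysis would collapse and the polynomial would be identically zero, contradicting its being of a specified degree at most $n$.

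I do not anticipate a real obstacle here, as the argument is template-identical to the Geronimus case already cited from \cite{Chihara book}. The only delicate point to flag explicitly in the write-up is the appeal to quasi-definiteness of $\mathcal{L}^U$, which was the standing hypothesis when $\{\mathcal{U}_n(x;a)\}$ was introduced in Section~\ref{Spectral transformations}; this is exactly what ensures the diagonal moments $\mathcal{L}^U((\mathcal{U}_j)^2)$ are nonzero, so that the triangular linear system produced by the orthogonality relations is invertible and the coefficients $\alpha_{j,n}$ for $j\leq n-2$ are indeed forced to be zero.
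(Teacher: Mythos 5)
Your argument is correct and is exactly the standard basis-expansion proof that the paper implicitly invokes by citing Chihara's book (the paper's ``proof'' is just the reference). You have merely written out in full the argument the authors delegate to the textbook, including the correct appeal to quasi-definiteness of $\mathcal{L}^U$ to guarantee $\mathcal{L}^U\bigl((\mathcal{U}_j(x;a))^2\bigr)\neq 0$, so there is nothing to flag.
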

\begin{proof} See \cite{Chihara book},
\end{proof}
%%%%%%%%%%%%%%%%%%%%%%%%%%%%%%%%%%%%%%%%%%%%%%%%%%%%%%%%55
Next, the representation of source orthogonal polynomial in terms of consecutive degree of monic Uvarov polynomials is discussed.
\begin{proposition}
	Let $\{\mathcal{U}_n(x;a)\}_{n\geq1}$ be a sequence of monic Uvarov orthogonal polynomials. Then $P_n(x)$ can be written as
	\begin{align*}
		D_n(x)P_n(x)=(x-a)\frac{t_nP_{n}(a)}{\lambda_{n+1}P_{n-1}(a)}\mathcal{U}_{n+1}(x;a)+(x-a)(x-a-t_{n+1})\mathcal{U}_{n}(x;a),
	\end{align*}
where $D_n(x)=\left((x-a-t_n)+\frac{t_nP_{n}(a)}{\lambda_{n+1}P_{n-1}(a)}(x-c_{n+1})\right)(x-a-t_{n+1})+\frac{t_nt_{n+1}}{\lambda_{n+1}}\frac{P_{n+1}(a)}{P_{n-1}(a)}.$
\end{proposition}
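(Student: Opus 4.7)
The plan is to set up a $2\times 2$ linear system expressing $(x-a)\mathcal{U}_n(x;a)$ and $(x-a)\mathcal{U}_{n+1}(x;a)$ in terms of the basis $\{P_n(x),P_{n-1}(x)\}$, and then recover $P_n(x)$ by eliminating $P_{n-1}(x)$ via Cramer's rule. The denominator produced by this elimination will, after dividing through by $\lambda_{n+1}$, be exactly $D_n(x)$.

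First I would use the definition $\mathcal{U}_n(x;a)=P_n(x)-t_n\mathcal{C}_{n-1}(x;a)$ together with the Christoffel representation \eqref{kernel} to obtain
\begin{align*}
(x-a)\mathcal{U}_n(x;a)=(x-a-t_n)P_n(x)+t_n\frac{P_n(a)}{P_{n-1}(a)}P_{n-1}(x).
\end{align*}
The analogous expansion for $\mathcal{U}_{n+1}(x;a)$ produces a combination of $P_{n+1}(x)$ and $P_n(x)$; applying the TTRR $P_{n+1}(x)=(x-c_{n+1})P_n(x)-\lambda_{n+1}P_{n-1}(x)$ rewrites it as
\begin{align*}
(x-a)\mathcal{U}_{n+1}(x;a)=\Bigl[(x-a-t_{n+1})(x-c_{n+1})+t_{n+1}\tfrac{P_{n+1}(a)}{P_n(a)}\Bigr]P_n(x)-\lambda_{n+1}(x-a-t_{n+1})P_{n-1}(x).
\end{align*}

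Second, I would view these two identities as a linear system in $P_n(x)$ and $P_{n-1}(x)$. To annihilate $P_{n-1}(x)$, multiply the first identity by $(x-a-t_{n+1})$ (which equals $-\beta_2/\lambda_{n+1}$ in the notation above) and take the combination corresponding to multiplying the second identity by $\tfrac{t_nP_n(a)}{\lambda_{n+1}P_{n-1}(a)}$. Adding these kills the $P_{n-1}$ contribution and yields
\begin{align*}
(x-a)\Bigl[\tfrac{t_nP_n(a)}{\lambda_{n+1}P_{n-1}(a)}\mathcal{U}_{n+1}(x;a)+(x-a-t_{n+1})\mathcal{U}_n(x;a)\Bigr]=\widetilde{D}_n(x)P_n(x),
\end{align*}
where $\widetilde{D}_n(x)$ is the corresponding $2\times 2$ determinant divided by $\lambda_{n+1}$.

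Finally, I would expand that determinant and collect terms. Pulling the common factor $(x-a-t_{n+1})$ out of two of the summands gives
\begin{align*}
\widetilde{D}_n(x)=(x-a-t_{n+1})\Bigl[(x-a-t_n)+\tfrac{t_nP_n(a)}{\lambda_{n+1}P_{n-1}(a)}(x-c_{n+1})\Bigr]+\tfrac{t_nt_{n+1}}{\lambda_{n+1}}\tfrac{P_{n+1}(a)}{P_{n-1}(a)},
\end{align*}
which is exactly $D_n(x)$ as defined. The proof is therefore essentially algebraic. The only mild obstacle is the bookkeeping of the numerator/denominator structure when dividing by $\lambda_{n+1}$; once the two starting identities are written down cleanly, the result follows by a direct determinant computation with no additional orthogonality input.
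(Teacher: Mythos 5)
Your proof is correct and follows essentially the same route as the paper: both set up the $2\times2$ transfer matrix between $\{(x-a)\mathcal{U}_{n+1},(x-a)\mathcal{U}_n\}$ and a pair of consecutive $P_k$'s (you use $\{P_n,P_{n-1}\}$, the paper uses $\{P_{n+1},P_n\}$ after applying the TTRR to the $\mathcal{U}_n$ row instead of the $\mathcal{U}_{n+1}$ row) and then invert/eliminate, with the determinant giving $D_n(x)$. The only blemish is the stray parenthetical ``which equals $-\beta_2/\lambda_{n+1}$'' referring to notation you never introduced; it should be deleted, but the surrounding computation is correct.
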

\begin{proof}
	The expansion of the kernel polynomial allows us to express the monic Uvarov polynomials as
	\begin{align}\label{Uvarov n degree in terms source poly only }
		\mathcal{U}_{n}(x;a)=\left(1-\frac{t_{n}}{x-a}\right)P_{n}(x)+\frac{t_{n}}{x-a}\frac{P_{n}(a)}{P_{n-1}(a)}P_{n-1}(x).
	\end{align}
Using  the TTRR satisfied by $P_n(x)$, we can write \ref{Uvarov n degree in terms source poly only } as
\begin{align}
	(x-a)\mathcal{U}_{n}(x;a)=\frac{-t_nP_{n}(a)}{\lambda_{n+1}P_{n-1}(a)} P_{n+1}(x)+\left((x-a-t_n)+\frac{t_nP_{n}(a)}{\lambda_{n+1}P_{n-1}(a)}(x-c_{n+1})\right)P_n(x).
\end{align}
The transfer matrix for $\mathcal{U}_{n+1}(x;a)$ and $\mathcal{U}_{n}(x;a)$ from $P_{n}(x)$ and $ P_{n-1}(x)$  is
	\[(x-a)\begin{pmatrix}
	\mathcal{U}_{n+1}(x;a)\\
	\mathcal{U}_{n}(x;a)
\end{pmatrix}=\mathcal{N}(x)\begin{pmatrix}
	P_{n+1}(x)\\
	P_{n}(x)
\end{pmatrix},\]
where
\[\mathcal{N}(x)=\begin{pmatrix}
	x-a-t_{n+1} & t_{n+1}\frac{P_{n+1}(a)}{P_{n}(a)}\\
\frac{-t_nP_{n}(a)}{\lambda_{n+1}P_{n-1}(a)} &(x-a-t_n)+\frac{t_nP_{n}(a)}{\lambda_{n+1}P_{n-1}(a)}(x-c_{n+1})
\end{pmatrix}.\]
Since $\mathcal{N}(x)$ is nonsingular, we have

\[D_n(x)\begin{pmatrix}
P_{n+1}(x)\\
P_{n}(x)	
\end{pmatrix}=(x-a)\mathcal{N}'(x)\begin{pmatrix}
\mathcal{U}_{n+1}(x;a)\\
\mathcal{U}_{n}(x;a)
\end{pmatrix},\]
where
\[\mathcal{N}'(x)=\begin{pmatrix}
(x-a-t_n)+\frac{t_nP_{n}(a)}{\lambda_{n+1}P_{n-1}(a)}(x-c_{n+1})	 & -t_{n+1}\frac{P_{n+1}(a)}{P_{n}(a)}\\
	\frac{t_nP_{n}(a)}{\lambda_{n+1}P_{n-1}(a)} &x-a-t_{n+1}
\end{pmatrix}.\]
This completes the proof.
\end{proof}
%%%%%%%%%%%%%%%%%%%%%%%%%%%%%%%%%%%%%%%%%%%
The subsequent next result deals with the expression of orthogonal polynomials $P_n(x)$ as a linear combination of two quasi-Uvarov polynomials of order one and consecutive degrees. This procedure involves polynomial coefficients of degrees at most three.
\begin{lemma}Let $\mathcal{U}^Q_n(x;a)$ be a monic quasi-Uvarov polynomial of order one, i.e., $\alpha _{n,n}=1, \alpha _{n-1,n}= \alpha_{n}.$ Then the monic polynomials $P_n(x)$ orthogonal  with respect to the linear functional $\mathcal{L}$ can be written as
	\begin{align*}
		\frac{w_n(x)}{x-a}P_n(x)=e_n(x)\mathcal{U}_{n+1}^Q(x;a)+\left((x-a-t_{n+1})\lambda_{n+1}-t_{n+1}\alpha_{n+1}\frac{P_n(a)}{P_{n-1}(a)}\right)\mathcal{U}_{n}^Q(x;a), ~n\geq0,
	\end{align*}
where $\displaystyle{e_n(x)=\alpha_n\left(1+\frac{t_n}{\lambda_{n}}\frac{P_{n-1}(a)}{P_{n-2}(a)}\right)}x+\frac{P_{n}(a)}{P_{n-1}(a)}-t_n\alpha_n, s_n(x)=(x-a-t_{n+1})(x-c_{n+1})+\alpha_{n+1}(x-a)-\frac{P_{n+1}(a)}{P_n(a)}t_{n+1}-t_n\alpha_{n+1}$ and \\
\begin{align*}
	w_n(x)=&-\left(x-a-t_n+\frac{t_n\alpha_n}{\lambda_{n}}\frac{P_{n-1}(a)}{P_{n-2}(a)}\right)\left(t_{n+1}\alpha_{n+1}\frac{P_n(a)}{P_{n-1}(a)}-(x-a-t_{n+1})\lambda_{n+1}\right)\\
	&+s_n(x)e_n(x).
\end{align*}
%$\displaystyle{w_n(x)=-\left(x-a-t_n+\frac{t_n\alpha_n}{\lambda_{n}}\frac{P_{n-1}(a)}{P_{n-2}(a)}\right)\left(t_{n+1}\alpha_{n+1}\frac{P_n(a)}{P_{n-1}(a)}-(x-a-t_{n+1})\lambda_{n+1})\right)}+s_n(x)e_n(x)$
\end{lemma}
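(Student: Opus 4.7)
The plan is to mimic, in the Uvarov setting, the transfer-matrix argument used to prove Lemma 3.2. I would express both $(x-a)\mathcal{U}_n^Q(x;a)$ and $(x-a)\mathcal{U}_{n+1}^Q(x;a)$ as explicit linear combinations of the two consecutive source polynomials $P_n(x)$ and $P_{n-1}(x)$, assemble these into a $2\times 2$ transfer system, and invert it to solve for $P_n(x)$.

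For the first step, starting from $\mathcal{U}_n(x;a) = P_n(x) - t_n\mathcal{C}_{n-1}(x;a)$ and using \eqref{kernel}, I obtain
\[
(x-a)\mathcal{U}_n(x;a) = (x-a-t_n)P_n(x) + t_n\frac{P_n(a)}{P_{n-1}(a)}P_{n-1}(x).
\]
Plugging this and its $n-1$ analogue into the definition \eqref{Non monic Quasi-Uvarov order 1} gives $(x-a)\mathcal{U}_n^Q(x;a)$ in the basis $\{P_n,P_{n-1},P_{n-2}\}$; then using the TTRR to eliminate $P_{n-2}$ produces an expression of the form
\[
(x-a)\mathcal{U}_n^Q(x;a) = E_n(x)P_n(x) + e_n(x)P_{n-1}(x),
\]
and matching terms with the statement identifies the $P_{n-1}$-coefficient with the stated $e_n(x)$. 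An identical procedure applied to $\mathcal{U}_{n+1}^Q(x;a) = \mathcal{U}_{n+1}(x;a) + \alpha_{n+1}\mathcal{U}_n(x;a)$, followed by elimination of $P_{n+1}$ via the TTRR, yields
\[
(x-a)\mathcal{U}_{n+1}^Q(x;a) = s_n(x)P_n(x) - \left[(x-a-t_{n+1})\lambda_{n+1} - t_{n+1}\alpha_{n+1}\frac{P_n(a)}{P_{n-1}(a)}\right]P_{n-1}(x).
\]
Solving this $2\times 2$ system for $P_n(x)$ via the adjugate produces the claimed identity, with $w_n(x)$ arising as $(x-a)$ times the determinant of the coefficient matrix; this determinant expands as $s_n(x)e_n(x) + E_n(x)\bigl[(x-a-t_{n+1})\lambda_{n+1} - t_{n+1}\alpha_{n+1}\tfrac{P_n(a)}{P_{n-1}(a)}\bigr]$, which is the stated $w_n(x)$.

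The chief obstacle is bookkeeping rather than any conceptual novelty: the elimination of $P_{n-2}$ contributes factors $\lambda_n$ and the ratio $P_{n-1}(a)/P_{n-2}(a)$ that must combine cleanly with the $t_{n-1}$ coefficient inherited from $\mathcal{U}_{n-1}$ in order to reproduce the stated closed form for $e_n(x)$, and a careful sign and index audit is needed at each step. Invertibility of the transfer matrix is automatic because $w_n(x)$ is a polynomial of degree three in $x$ whose leading term comes from the product $s_n(x)e_n(x)$; hence the identity, being a polynomial identity, holds globally once verified at a generic $x$.
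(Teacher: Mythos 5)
Your proposal follows essentially the same route as the paper: expand $(x-a)\mathcal{U}_{n}^Q(x;a)$ and $(x-a)\mathcal{U}_{n+1}^Q(x;a)$ over $\{P_n,P_{n-1}\}$ using the kernel representation of the Uvarov polynomials together with the TTRR to eliminate $P_{n-2}$ and $P_{n+1}$, and then invert the resulting $2\times 2$ transfer matrix via the adjugate. The only slip is cosmetic: $w_n(x)$ is the determinant of the transfer matrix itself, not $(x-a)$ times it --- the factor $x-a$ sits on the Uvarov side of the system, which is exactly why the identity carries $w_n(x)/(x-a)$ on the left --- and your own expansion of the determinant confirms this.
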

\begin{proof}
	We write
	\begin{align*}
		(x-a)\mathcal{U}_{n}^Q(x;a)&=(x-a-t_n)P_n(x)+\left(\alpha_n(x-a)+t_n\frac{P_n(a)}{P_{n-1}(a)}-t_n\alpha_n\right)P_{n-1}(x)\\
		&+t_n\alpha_n\frac{P_{n-1}(a)}{P_{n-2}(a)}P_{n-2}(x).
	\end{align*}
Since $\displaystyle{\lambda_{n}P_{n-2}(x)=(x-c_n)P_{n-1}(x)-P_n(x)}$ and combining the coefficients of $P_n(x)$ and $P_{n-1}(x)$ we get
\begin{align*}
	(x-a)\mathcal{U}_{n}^Q(x;a)=&\left(x-a-t_n+\frac{t_n\alpha_n}{\lambda_{n}}\frac{P_{n-1}(a)}{P_{n-2}(a)}\right)P_n(x)\\
	&+\left(\alpha_n\left(1+\frac{t_n}{\lambda_{n}}\frac{P_{n-1}(a)}{P_{n-2}(a)}\right)x+\frac{P_{n}(a)}{P_{n-1}(a)}-t_n\alpha_n\right)P_{n-1}(x).
\end{align*}
Denoting $\displaystyle{e_n(x):=\alpha_n\left(1+\frac{t_n}{\lambda_{n}}\frac{P_{n-1}(a)}{P_{n-2}(a)}\right)x+\frac{P_{n}(a)}{P_{n-1}(a)}-t_n\alpha_n}$, we can write
\begin{equation*}
	(x-a)\mathcal{U}_{n}^Q(x;a)=\left(x-a-t_n+\frac{t_n\alpha_n}{\lambda_{n}}\frac{P_{n-1}(a)}{P_{n-2}(a)}\right)P_n(x)+e_n(x)P_{n-1}(x).
\end{equation*}
Similarly we can use $\displaystyle{P_{n+1}(x)=xP_n(x)-c_{n+1}P_n(x)-\lambda_{n+1}P_{n-1}(x)}$ to obtain the expression of $\mathcal{U}_{n+1}^Q(x;a)$ as
\begin{align*}
	(x-a)\mathcal{U}_{n+1}^Q(x;a)=s_n(x)P_n(x)+\left(t_{n+1}\alpha_{n+1}\frac{P_n(a)}{P_{n-1}(a)}-(x-a-t_{n+1})\lambda_{n+1}\right)P_{n-1}(x),
\end{align*}
	where $\displaystyle{s_n(x)=(x-a-t_{n+1})(x-c_{n+1})+\alpha_{n+1}(x-a)+\frac{P_{n+1}(a)}{P_n(a)}t_{n+1}-t_n\alpha_{n+1}}$.
	The transfer matrix  from  $P_{n}(x)$ and $ P_{n-1}(x)$ to  $\mathcal{U}_{n+1}^Q(x;a)$ and $\mathcal{U}_{n}^Q(x;a)$ is
	\[(x-a)\begin{pmatrix}
		\mathcal{U}_{n+1}^{Q}(x;a)\\
		\mathcal{U}_{n}^{Q}(x;a)
	\end{pmatrix}=\mathcal{M}(x)\begin{pmatrix}
		P_{n}(x)\\
		P_{n-1}(x)
	\end{pmatrix},\]
where
\[\mathcal{M}(x)=\begin{pmatrix}
	s_n(x) & t_{n+1}\alpha_{n+1}\frac{P_n(a)}{P_{n-1}(a)}-(x-a-t_{n+1})\lambda_{n+1}\\
	x-a-t_n+\frac{t_n\alpha_n}{\lambda_{n}}\frac{P_{n-1}(a)}{P_{n-2}(a)} &e_n(x)
\end{pmatrix}.\]
Since $\mathcal{M}(x)$ is nonsingular then we have
	\begin{align}\label{Matrix form polynomial in terms of Quasi Uvarov}
		w_n(x)\begin{pmatrix}
P_{n}(x)\\
P_{n-1}(x)	
\end{pmatrix}=(x-a)\mathcal{M}'(x)\begin{pmatrix}
	\mathcal{U}_{n+1}^{Q}(x;a)	\\
	\mathcal{U}_{n}^{Q}(x;a)
\end{pmatrix},\end{align}
where
\[\mathcal{M}'(x)=\begin{pmatrix}
	 e_n(x)& (x-a-t_{n+1})\lambda_{n+1}-t_{n+1}\alpha_{n+1}\frac{P_n(a)}{P_{n-1}(a)}\\
	-x+a+t_n-\frac{t_n\alpha_n}{\lambda_{n}}\frac{P_{n-1}(a)}{P_{n-2}(a)} &s_n(x)
\end{pmatrix}.\]
This completes the proof.
\end{proof}
%%%%%%%%%%%%%%%%%%%%%%%%%%%%%%%%%%%%%%%%%%%%%%%%%%%
The difference equation satisfied by the monic quasi-Geronimus polynomial of order one requires coefficients up to quadratic degree. However, as demonstrated in the next theorem, having coefficients with quadratic degree is not enough to derive the difference equation for the monic quasi-Uvarov polynomial of order one. The degree of coefficients needed to obtain the difference equation for monic quasi-Uvarov polynomial of order one is, at most, twice the degree of coefficients in the difference equation of monic quasi-Geronimus polynomial of order one.

\begin{theorem}\label{Difference equation for Quasi-Uvarov}Let $\{P_n(x)\}_{n=0}^\infty$ and $\{\mathcal{U}_{n}(x;a)\}_{n=0}^{\infty}$ be sequences of orthogonal polynomials with respect to $\mathcal{L}$ and $\mathcal{L}^U$, respectively. Then the difference equation satisfied by monic quasi-Uvarov polynomials of order one is
	\begin{align*}
		w_n(x)\mathcal{U}_{n+2}^{Q}(x;a)&=\left(r_{n+1}(x)e_n(x)-s_{n+1}(x)\lambda_{n+1}y_n(x)\right)\mathcal{U}_{n+1}^{Q}(x;a)\\
		&\hspace{4.6cm}+\left(r_{n+1}(x)h_n(x)-s_{n+1}(x)s_n(x)\lambda_{n+1}\right)\mathcal{U}_{n}^{Q}(x;a), ~n\geq0,
	\end{align*}
where $\displaystyle{y_n(x)=-x+a+t_n-\frac{t_n\alpha_nP_{n-1}(a)}{\lambda_{n}P_{n-2}(a)},~ r_{n+1}(x)=s_{n+1}(x)(x-c_{n+1})-h_{n+1}(x)}$ \\
and
$\displaystyle{h_n(x)=(x-a-t_{n+1})\lambda_{n+1}-t_{n+1}\alpha_{n+1}\frac{P_n(a)}{P_{n-1}(a)}}$.
\end{theorem}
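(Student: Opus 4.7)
The plan is to mirror the strategy used for the quasi-Geronimus difference equation in Theorem \ref{Difference equation for Quasi-Geronimus}: first express $\mathcal{U}_{n+2}^{Q}(x;a)$ as a linear combination of the two consecutive source polynomials $P_n(x)$ and $P_{n-1}(x)$, then use the inverse transfer matrix \eqref{Matrix form polynomial in terms of Quasi Uvarov} from the preceding lemma to replace those source polynomials by $\mathcal{U}_{n+1}^{Q}(x;a)$ and $\mathcal{U}_{n}^{Q}(x;a)$.

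For the first step, I would start from the identity proved in the preceding lemma, namely
\begin{align*}
(x-a)\mathcal{U}_{n+1}^{Q}(x;a) = s_n(x) P_n(x) - h_n(x) P_{n-1}(x),
\end{align*}
where the sign convention is chosen so that $h_n(x)=(x-a-t_{n+1})\lambda_{n+1}-t_{n+1}\alpha_{n+1}\frac{P_n(a)}{P_{n-1}(a)}$ as in the theorem. Shifting the index by one gives
\begin{align*}
(x-a)\mathcal{U}_{n+2}^{Q}(x;a) = s_{n+1}(x) P_{n+1}(x) - h_{n+1}(x) P_n(x),
\end{align*}
after which I eliminate $P_{n+1}(x)$ through the TTRR $P_{n+1}(x)=(x-c_{n+1})P_n(x)-\lambda_{n+1}P_{n-1}(x)$. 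Collecting the coefficient of $P_n(x)$ reproduces exactly $r_{n+1}(x):=s_{n+1}(x)(x-c_{n+1})-h_{n+1}(x)$, yielding
\begin{align*}
(x-a)\mathcal{U}_{n+2}^{Q}(x;a) = r_{n+1}(x)\, P_n(x) - s_{n+1}(x)\lambda_{n+1}\, P_{n-1}(x).
\end{align*}

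For the second step, I would multiply through by $w_n(x)$ and substitute the two rows of \eqref{Matrix form polynomial in terms of Quasi Uvarov}, which read
\begin{align*}
w_n(x) P_n(x) &= (x-a)\bigl[e_n(x)\mathcal{U}_{n+1}^{Q}(x;a) + h_n(x)\mathcal{U}_{n}^{Q}(x;a)\bigr], \\
w_n(x) P_{n-1}(x) &= (x-a)\bigl[y_n(x)\mathcal{U}_{n+1}^{Q}(x;a) + s_n(x)\mathcal{U}_{n}^{Q}(x;a)\bigr],
\end{align*}
where $y_n(x)$ is the $(2,1)$ entry of $\mathcal{M}'(x)$ used in the preceding lemma. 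The common factor $(x-a)$ on both sides cancels, and regrouping the coefficients of $\mathcal{U}_{n+1}^{Q}(x;a)$ and $\mathcal{U}_{n}^{Q}(x;a)$ yields precisely $r_{n+1}(x)e_n(x)-s_{n+1}(x)\lambda_{n+1}y_n(x)$ and $r_{n+1}(x)h_n(x)-s_{n+1}(x)s_n(x)\lambda_{n+1}$, as claimed.

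The main obstacle is not conceptual but a matter of bookkeeping: the sign conventions of $h_n$, $y_n$ and $s_n$ in the theorem do not textually coincide with the entries of the matrix $\mathcal{M}'(x)$ appearing in the proof of the preceding lemma (where $h_n$ enters with the opposite sign), so one must track signs carefully when importing those formulas. A secondary technicality is verifying that $(x-a)$ divides the right-hand side so that the final equation is purely polynomial; this is automatic because the $(x-a)$ factor is present in both $(x-a)\mathcal{U}_{n+2}^Q$ and in each of the two substituted expressions for $w_n(x)P_n(x)$ and $w_n(x)P_{n-1}(x)$. Once these sign and cancellation checks are made, the identity follows by direct algebraic matching.
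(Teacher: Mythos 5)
Your proposal is correct and follows essentially the same route as the paper: the paper likewise first reduces $(x-a)\mathcal{U}_{n+2}^{Q}(x;a)$ to a combination of $P_n(x)$ and $P_{n-1}(x)$ via two applications of the TTRR and then substitutes the rows of the inverted transfer matrix \eqref{Matrix form polynomial in terms of Quasi Uvarov}. Your sign bookkeeping is in fact cleaner than the paper's, whose intermediate display writes the coefficient of $P_n(x)$ as $s_{n+1}(x)(x-c_{n+1})+h_{n+1}(x)$ — a typo, since only the minus sign you obtain is consistent with the definition $r_{n+1}(x)=s_{n+1}(x)(x-c_{n+1})-h_{n+1}(x)$ in the theorem statement.
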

\begin{proof}
	We write	\begin{align*}(x-a)\mathcal{U}_{n+2}^{Q}(x;a)=(x-a-t_{n+2})&P_{n+2}(x)+t_{n+2}\alpha_{n+2}\frac{P_{n+1}(a)}{P_{n}(a)}P_{n}(x)\\
		&+\left(\alpha_{n+2}(x-a)+t_{n+2}\frac{P_{n+2}(a)}{P_{n+1}(a)}-t_{n+2}\alpha_{n+2}\right)P_{n+1}(x).
	\end{align*}
Next,  according to the TTRR  the expansion of $P_{n+2}(x)$ in terms of $P_{n+1}(x)$ and $P_n(x)$ yields
\begin{align*}
	(x-a)\mathcal{U}_{n+2}^{Q}(x;a)=s_{n+1}(x)P_{n+1}(x)-h_{n+1}(x)P_{n}(x).
\end{align*}
Using the TTRR that the polynomials $P_n(x)$ satisfy, we get
\begin{align*}
	(x-a)\mathcal{U}_{n+2}^{Q}(x;a)=(s_{n+1}(x)(x-c_{n+1})+h_{n+1}(x))P_n(x)-s_{n+1}(x)\lambda_{n+1}P_{n-1}(x).
\end{align*}
Using \eqref{Matrix form polynomial in terms of Quasi Uvarov}, we get the desired result.
\end{proof}
%%%%%%%%%%%%%%%%%%%%%%%%%%%%%%%%%%%%%%%%%%%%%%
In Theorem \ref{orthogonality of quasi-Uvarov}, we can lower the degree of coefficients in the difference equation satisfied by the monic quasi-Uvarov polynomial of order one. This reduction enables us to establish the three-term recurrence relation by applying conditions to the choices of $\alpha_{n}$.
\begin{theorem}\label{orthogonality of quasi-Uvarov}
	Let $\mathcal{U}_{n}^{Q}(x;a)$ be a quasi-Uvarov polynomial of order one with parameter $\beta_{n}$ such that
	\begin{align}\label{alpha n restriction cond}
		\alpha_{n}(c_{n+1}^u-c_n^u+\alpha_{n}-\alpha_{n+1})+\frac{\alpha_{n}}{\alpha_{n-1}}\lambda_{n}^u-\lambda_{n+1}^u=0, ~ n\geq2.
	\end{align}
 Then the polynomials $\mathcal{U}_{n}^{Q}(x;a)$ satisfy the TTRR
	\begin{align}
		\mathcal{U}_{n+1}^{Q}(x;a)-(x-c_{n+1}^{qu})\mathcal{U}_{n}^{Q}(x;a)+\lambda_{n+1}^{qu}\mathcal{U}_{n-1}^{Q}(x;a)=0, ~ n\geq0,
	\end{align}
where the recurrence coefficients are given by
\begin{align*}
	\lambda_{n+1}^{qu}=\frac{\alpha_{n}}{\alpha_{n-1}}\lambda_{n}^u, ~~~~~~~c_{n+1}^{qu}=c_{n+1}^u+\alpha_{n}-\alpha_{n+1}.
\end{align*}
If $\lambda_{n+1}^{qu}\neq 0$, then according to Favard's theorem  $\{\mathcal{U}_{n}^{Q}(x;a)\}_{n=1}^{\infty}$ is a sequence of monic  orthogonal polynomials with respect to a quasi-definite linear functional. If $\lambda_{n+1}^{qu}> 0,$ the linear functional is positive definite.
\end{theorem}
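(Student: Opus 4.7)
The proof plan is essentially to mimic verbatim the argument given for Proposition \ref{orthogonality of quasi-Geronimus}, since the statement has the same structural form with $\mathcal{U}_n$, $c_{n+1}^u$, $\lambda_{n+1}^u$, $\alpha_n$ replacing $\mathcal{G}_n$, $c_{n+1}^g$, $\lambda_{n+1}^g$, $\beta_n$ respectively. Concretely, I would begin by substituting the defining expression $\mathcal{U}_k^Q(x;a)=\mathcal{U}_k(x;a)+\alpha_k\mathcal{U}_{k-1}(x;a)$ into the purported recurrence
\[
\mathcal{U}_{n+1}^{Q}(x;a)-(x-c_{n+1}^{qu})\mathcal{U}_{n}^{Q}(x;a)+\lambda_{n+1}^{qu}\mathcal{U}_{n-1}^{Q}(x;a),
\]
and expand everything in terms of $\mathcal{U}_{n+1}, \mathcal{U}_n, \mathcal{U}_{n-1}, \mathcal{U}_{n-2}$. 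This gives a single monic $\mathcal{U}_{n+1}$ term together with combinations $(x-c_{n+1}^{qu}-\alpha_{n+1})\mathcal{U}_n$ etc.

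The second step is to use the TTRR \eqref{Uvarov-TTRR} in the form $x\mathcal{U}_k(x;a)=\mathcal{U}_{k+1}(x;a)+c_{k+1}^u\mathcal{U}_k(x;a)+\lambda_{k+1}^u\mathcal{U}_{k-1}(x;a)$ to eliminate the $\mathcal{U}_{n+1}$ term and replace any factor $x\mathcal{U}_n$ and $x\mathcal{U}_{n-1}$ by the corresponding three-term expansion. After this reduction the entire expression collapses into a linear combination of $\mathcal{U}_n(x;a)$, $\mathcal{U}_{n-1}(x;a)$, and $\mathcal{U}_{n-2}(x;a)$, exactly paralleling the Geronimus computation. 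The coefficients should turn out to be
\[
c_{n+1}^{qu}-c_{n+1}^u+\alpha_{n+1}-\alpha_n, \qquad \alpha_n c_{n+1}^{qu}+\lambda_{n+1}^{qu}-\lambda_{n+1}^u-\alpha_n c_n^u, \qquad \lambda_{n+1}^{qu}\alpha_{n-1}-\alpha_n\lambda_n^u.
\]

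Now I invoke the linear independence of $\{\mathcal{U}_n,\mathcal{U}_{n-1},\mathcal{U}_{n-2}\}$ (which holds because $\{\mathcal{U}_n(x;a)\}$ is an orthogonal polynomial sequence with respect to $\mathcal{L}^U$, hence a basis of the polynomial space). Setting each coefficient to zero immediately forces
\[
c_{n+1}^{qu}=c_{n+1}^u+\alpha_n-\alpha_{n+1},\qquad \lambda_{n+1}^{qu}=\frac{\alpha_n}{\alpha_{n-1}}\lambda_n^u,
\]
and substituting these into the middle coefficient yields exactly the compatibility condition \eqref{alpha n restriction cond}. Conversely, if \eqref{alpha n restriction cond} holds and the recurrence parameters are defined as above, then the three coefficients vanish simultaneously, so the TTRR is satisfied.

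The last step is to apply Favard's theorem \cite[Theorem 4.4]{Chihara book}: whenever $\lambda_{n+1}^{qu}\neq 0$, the sequence $\{\mathcal{U}_n^Q(x;a)\}$ satisfies a monic TTRR with nonvanishing off-diagonal coefficient, and hence is orthogonal with respect to some quasi-definite linear functional; positivity of all $\lambda_{n+1}^{qu}$ upgrades this to the positive-definite case. I do not foresee any genuine obstacle in this proof; the only point requiring a little care is bookkeeping so that the regrouping after using \eqref{Uvarov-TTRR} yields precisely the three coefficients above, and that the special initial indices ($n=0,1$, where $\mathcal{U}_{-1}=0$ and $\mathcal{U}_{-2}$ is not involved) are handled by the usual convention that makes \eqref{alpha n restriction cond} start at $n\geq 2$.
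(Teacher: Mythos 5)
Your proposal is correct and follows essentially the same route as the paper: expand $\mathcal{U}_{n+1}^{Q}-(x-c_{n+1}^{qu})\mathcal{U}_{n}^{Q}+\lambda_{n+1}^{qu}\mathcal{U}_{n-1}^{Q}$ via the definition, reduce with the Uvarov TTRR \eqref{Uvarov-TTRR} to a combination of $\mathcal{U}_n$, $\mathcal{U}_{n-1}$, $\mathcal{U}_{n-2}$ with exactly the three coefficients you list, and conclude by linear independence and Favard's theorem. The paper's proof is precisely this transcription of the Geronimus argument, so there is nothing to add.
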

\begin{proof}We simplify
	\begin{align*}
		&\mathcal{U}_{n+1}^{Q}(x;a)-(x-c_{n+1}^{qu})\mathcal{U}_{n}^{Q}(x;a)+\lambda_{n+1}^{qu}\mathcal{U}_{n-1}^{Q}(x;a)
		%&=\mathcal{G}_{n+1}(x;a)+\alpha_{n+1}\mathcal{G}_{n}(x;a)-(x-c_{n+1}^{qu})\mathcal{G}_{n}(x;a)-\alpha_{n}(x-c_{n+1}^{qu})\mathcal{U}_{n-1}(x;a)+\lambda_{n+1}^{qu}\mathcal{U}_{n-1}(x;a)+\lambda_{n+1}^{qu}\beta_{n-1}\mathcal{G}_{n-2}(x;a)\\
		=\mathcal{U}_{n+1}(x;a)\\
		&-(x-c_{n+1}^{qu}-\alpha_{n+1})\mathcal{U}_{n}(x;a)-(\alpha_{n}(x-c_{n+1}^{qu})-\lambda_{n+1}^{qu})\mathcal{U}_{n-1}(x;a)+\lambda_{n+1}^{qu}\alpha_{n-1}\mathcal{U}_{n-2}(x;a).
	\end{align*}
Using the TTRR satisfied by monic Uvarov polynomials, we obtain
	\begin{align*}
		&\mathcal{U}_{n+1}^{Q}(x;a)-(x-c_{n+1}^{qu})\mathcal{U}_{n}^{Q}(x;a)+\lambda_{n+1}^{qu}\mathcal{U}_{n-1}^{Q}(x;a)\\
		&=(c_{n+1}^{qu}-c_{n+1}^u+\alpha_{n+1}-\alpha_{n})\mathcal{U}_n(x;a)+(\alpha_{n}c_{n+1}^{qu}+\lambda_{n+1}^{qu}-\lambda_{n+1}^{u}-\alpha_{n}c_n^u)\mathcal{U}_{n-1}(x;a).\\
		&+(\lambda_{n+1}^{qu}\alpha_{n-1}-\alpha_{n}\lambda_{n}^u)\mathcal{U}_{n-2}(x;a).
	\end{align*}
	Since $\mathcal{U}_n(x;a)$, $\mathcal{U}_{n-1}(x;a)$ and $\mathcal{U}_{n-2}(x;a)$ are linearly independent the right hand side of the above expression vanishes if and only if
	\begin{align*}
		\alpha_{n}(c_{n+1}^u-c_n^u+\alpha_{n}-\alpha_{n+1})+\frac{\alpha_{n}}{\alpha_{n-1}}\lambda_{n}^u-\lambda_{n+1}^u=0,
	\end{align*}
	as well as
	\begin{align*}
		\lambda_{n+1}^{qu}=\frac{\alpha_{n}}{\alpha_{n-1}}\lambda_{n}^u, ~~~~~~~c_{n+1}^{qu}=c_{n+1}^u+\alpha_{n}-\alpha_{n+1}.
	\end{align*}
Thus the statement follows.
If $\lambda_{n+1}^{qu}\neq 0$, then according to Favard's theorem  $\{\mathcal{U}_{n}^{Q}(x;a)\}_{n=1}^{\infty}$ is a sequence of monic  orthogonal polynomials with respect to a quasi-definite linear functional. If $\lambda_{n+1}^{qu}> 0,$ the linear functional is positive definite.
\end{proof}
%%%%%%%%%%%%%%%%%%%%%%%%%%%%%%%%%%%%%%%%%%%%%%%%
\subsection{ An alternative  representation of $\alpha_{n}$} We discuss the different representation of $\alpha_{n}$ in a similar manner as we discussed in the subsection \ref{Different representation of beta n}.
\begin{itemize}
	\item[1.] We have $\lambda_{n+1}^{qu}=\frac{\alpha_{n}}{\alpha_{n-1}}\lambda_{n}^u$. Multiplying  $n$ copies of these equations we get
	\begin{equation*}
		\alpha_{n}=\frac{\alpha_{0}}{\lambda_{1}^{qu}}\frac{\mathcal{L}^{QU}[(\mathcal{U}^Q_n(x;a))^2]}{\mathcal{L}^U[(\mathcal{U}_{n-1}(x;a))^2]}.
	\end{equation*}
	Note that if $\alpha_{0}=0,$ then $\alpha_{n}=0$ for each $n\in \mathbb{N}$. Therefore $\alpha_{0}\neq0$.
	\item[2.] We have $c_{n+1}^{qu}=c_{n+1}^u+\alpha_{n}-\alpha_{n+1}$. Adding  $n$ copies of these equations we get
	\begin{equation*}
		\alpha_{n}=\alpha_{0}-\text{coefficient of}~x^{n-1}~\text{in}~\mathcal{U}_n(x;a)+ \text{coefficient of}~x^{n-1}~\text{in}~\mathcal{U}^Q_n(x;a).
	\end{equation*}
	\item[3.] We can write (\ref{beta n restriction cond}) as
	\begin{align*}
		\alpha_{n}-\alpha_{n+1}+\frac{\lambda_{n}^u}{\alpha_{n-1}}-\frac{\lambda_{n+1}^u}{\alpha_{n}}=c_n^u-c_{n+1}^u.
	\end{align*}
	Adding $n-1$ copies of the above equation, we get
	\begin{align*}
		\alpha_{n}&=\frac{\lambda_{n+1}^u}{C^{(2)}+c_{n+1}^u-\alpha_{n+1}},
	\end{align*}
	where $C^{(2)}=\alpha_{2}+\frac{\lambda_{2}^u}{\alpha_{1}}-c_2^u$. We can write $\alpha_{n}$ in terms of the continued fraction
	\begin{align}\label{Stieltjes CF of  alpha n}
		\frac{\alpha_{n}}{\lambda_{n+1}^u} = \frac{1}{C^{(2)}+c_{n+1}^u}\fminus\frac{\lambda_{n+2}^u}{C^{(2)}+c_{n+2}^u}\fminus\frac{\lambda_{n+3}^u}{C^{(2)}+c_{n+2}^u}\fminus\frac{\lambda_{n+3}^u}{C^{(2)}+c_{n+3}^u}\fminus\cdots
	\end{align}
	 Hence, we obtain a sequence of orthogonal polynomials with respect to the measure $\tilde{\nu}^{(n)}$ for a fixed value of $n$ associated with the continued fraction (\ref{Stieltjes CF of alpha n}). Therefore, we can write the above continued fraction in terms of the Stieltjes integral.
	 	\begin{align*}
		\alpha_{n}=\lambda_{n+1}^u\int_{-\infty}^{\infty}\frac{d\tilde{\nu}^{(n)}(z)}{C^{(2)}-z},\hspace{0.5cm}C^{(2)}\in \mathbb{C} \backslash supp(\tilde{\nu}^{(n)}).
	\end{align*}
	
\end{itemize}

%%%%%%%%%%%%%%%%%%%%%%%%%%%%%%%%%%%%%%%%%%%%%%%%%
In the next theorem, we recover the source orthogonal polynomials $P_n(x)$ from  a linear combination of the monic quasi-Uvarov polynomial of order one and the monic  polynomials generated by the Christoffel transformation.
\begin{theorem} Let $\mathcal{U}_{n}^{Q}(x;a_1)$ be a quasi-Uvarov polynomial of order one for some $a_1\in\mathbb{R}$. Let $\{\mathcal{C}_{n}(x;a_2)\}_{n=0}^{\infty}$ be a sequence of orthogonal polynomials with respect to $\mathcal{L}^C$ at $a_2\in\mathbb{R}$.  Then there exist sequences $\{\zeta_n\}_{n=0}^{\infty},$ $\{\gamma_n\}_{n=0}^{\infty}$ and $\{\eta_n\}_{n=0}^{\infty}$ such that
	\begin{align*}
	P_{n}(x)=	\frac{(x-\eta_n)(x-a_2)}{\zeta_nx-\gamma_n}\mathcal{C}_{n-1}(x;a_2)-\frac{x-a_1}{\zeta_nx-\gamma_n}\mathcal{U}_n^Q(x;a_1), ~n\geq 1.
	\end{align*}
\end{theorem}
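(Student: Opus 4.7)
The overall plan follows the template set by the earlier recovery theorems in Section \ref{Rec: quasi-Geronimus and recovery}. I would clear denominators and rewrite the claimed identity as
\[
(\zeta_n x-\gamma_n)P_n(x)=(x-\eta_n)(x-a_2)\mathcal{C}_{n-1}(x;a_2)-(x-a_1)\mathcal{U}_n^Q(x;a_1),
\]
and then expand both sides in the basis $\{P_n(x),P_{n-1}(x)\}$, selecting $\zeta_n,\gamma_n,\eta_n$ so that the right-hand side collapses exactly to $(\zeta_n x-\gamma_n)P_n(x)$. Since no constraint on the $\alpha_n$ is imposed (as in the earlier recovery theorems), the argument proceeds purely by matching coefficients.

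The first expansion is immediate from \eqref{kernel}: one has
$(x-a_2)\mathcal{C}_{n-1}(x;a_2)=P_n(x)-\tfrac{P_n(a_2)}{P_{n-1}(a_2)}P_{n-1}(x)$,
so multiplication by $(x-\eta_n)$ produces affine-in-$x$ coefficients on $P_n(x)$ and $P_{n-1}(x)$. For the second expansion I would use $\mathcal{U}_n^Q(x;a_1)=\mathcal{U}_n(x;a_1)+\alpha_n\mathcal{U}_{n-1}(x;a_1)$ together with $\mathcal{U}_k(x;a_1)=P_k(x)-t_k\mathcal{C}_{k-1}(x;a_1)$; applying \eqref{kernel} once more and then multiplying by $(x-a_1)$ to clear the pole yields
\[
(x-a_1)\mathcal{U}_n^Q(x;a_1)=(x-a_1-t_n)P_n(x)+A_n(x)P_{n-1}(x)+\alpha_n t_{n-1}\frac{P_{n-1}(a_1)}{P_{n-2}(a_1)}P_{n-2}(x),
\]
where $A_n(x)$ is an explicit polynomial of degree at most one in $x$ whose exact form comes from combining the $t_n\tfrac{P_n(a_1)}{P_{n-1}(a_1)}$ piece of $(x-a_1)\mathcal{U}_n$ with the $(x-a_1-t_{n-1})$ piece of $\alpha_n(x-a_1)\mathcal{U}_{n-1}$. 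The stray $P_{n-2}(x)$ term is then eliminated via the TTRR $\lambda_n P_{n-2}(x)=(x-c_n)P_{n-1}(x)-P_n(x)$, producing a clean linear combination of $P_n(x)$ and $P_{n-1}(x)$ with coefficients affine in $x$.

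Substituting these two expansions into the target identity and invoking the linear independence of $\{P_n(x),P_{n-1}(x)\}$, the vanishing of the $P_{n-1}(x)$ coefficient, which is an affine function of $x$, yields two scalar equations that determine $\eta_n$ along with one of $\zeta_n,\gamma_n$, while matching the (also affine) coefficient of $P_n(x)$ fixes the remaining relation between $\zeta_n$ and $\gamma_n$. Closed-form expressions for $\zeta_n,\gamma_n,\eta_n$ will then drop out in terms of $c_n,\lambda_n,t_n,\alpha_n,P_k(a_1),P_k(a_2)$. The main obstacle is largely combinatorial: carefully tracking the three-level interaction among $P_n(x),P_{n-1}(x),P_{n-2}(x)$ induced by the $\alpha_n\mathcal{U}_{n-1}$ summand, and verifying that after elimination of $P_{n-2}(x)$ the coefficient of $P_n(x)$ is genuinely at most linear in $x$, so that a denominator of the form $\zeta_n x-\gamma_n$ is both sufficient and consistent with the degree balance on either side of the identity.
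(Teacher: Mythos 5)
Your overall strategy (clear the denominator, expand both sides in consecutive $P_k$'s via the kernel formula and the TTRR, then match coefficients using linear independence) is exactly the paper's, but the plan as written breaks down because you aim at the wrong target polynomial. Do the degree count first: $(x-\eta_n)(x-a_2)\mathcal{C}_{n-1}(x;a_2)$ and $(x-a_1)\mathcal{U}_n^Q(x;a_1)$ are both monic of degree $n+1$, so their difference has degree at most $n$, and dividing by $\zeta_nx-\gamma_n$ leaves a polynomial of degree at most $n-1$. The right-hand side can therefore never equal $P_n(x)$; the identity actually recovers $P_{n-1}(x)$ (the displayed statement carries a typo, and the paper's own proof writes the expression as $P_{n-1}(x)$ plus a correction term that is then forced to vanish). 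Concretely, in your coefficient matching the coefficient of $P_n(x)$ on the right-hand side is
\[
(x-\eta_n)-\Bigl(x-a_1-t_n-\frac{\alpha_n t_{n-1}}{\lambda_n}\frac{P_{n-1}(a_1)}{P_{n-2}(a_1)}\Bigr)
= a_1+t_n-\eta_n+\frac{\alpha_n t_{n-1}}{\lambda_n}\frac{P_{n-1}(a_1)}{P_{n-2}(a_1)},
\]
a \emph{constant}, not an affine function of $x$ as you assert (the $x$'s cancel); matching it against $\zeta_nx-\gamma_n$ forces $\zeta_n=0$, destroying the claimed form. Worse, the slope of the genuinely affine coefficient of $P_{n-1}(x)$ on the right equals $-\frac{P_n(a_2)}{P_{n-1}(a_2)}-\alpha_n-\frac{\alpha_n t_{n-1}}{\lambda_n}\frac{P_{n-1}(a_1)}{P_{n-2}(a_1)}$, which involves none of the free parameters $\zeta_n,\gamma_n,\eta_n$, so demanding that this coefficient vanish identically is generically impossible. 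Your setup yields four scalar equations in three unknowns, one of them parameter-free.

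The repair is minimal: put $(\zeta_nx-\gamma_n)P_{n-1}(x)$ on the left instead. Then the (constant) $P_n$-coefficient determines $\eta_n$, the slope of the $P_{n-1}$-coefficient determines $\zeta_n$, and its constant term determines $\gamma_n$ --- three equations in three unknowns, landing on the formulas of the paper, whose proof performs the same computation in the basis $\{P_n,P_{n-1},P_{n-2}\}$ with constant coefficients (an equivalent bookkeeping of your affine-coefficient expansion after the TTRR substitution $xP_{n-1}=P_n+c_nP_{n-1}+\lambda_nP_{n-2}$).
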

\begin{proof}%If there exists a sequence $\{\zeta_n\}$, $\{\gamma_n\}$ and $\{\eta_n\}$ such that the sequence of polynomials $\{{\mathcal{U}}_n^{C}(a_1,a_2;x)\}_{n=0}^\infty$ is orthogonal with respect to the linear functional $\mathcal{L}$, then by uniqueness theorem  of orthogonal polynomials, $\{{\mathcal{U}}_n^{C}(a_1,a_2;x)\}_{n=0}^\infty$ and $\{\mathcal{P}_n(x)\}_{n=0}^{\infty}$ are the same system of orthogonal polynomials and vice-versa.
Let consider
	
	\begin{align*}
		&\mathcal{U}_n^C(a_1,a_2;x)=	\frac{(x-\eta_n)(x-a_2)}{\zeta_nx-\gamma_n}\mathcal{C}_{n-1}(x;a_2)-\frac{x-a_1}{\zeta_nx-\gamma_n}\mathcal{U}_n^Q(x;a_1)\\
		&=\frac{-1}{\zeta_nx-\gamma_n}\left[(x-a_1)\mathcal{U}_n^Q(x;a_1)+(\zeta_nx-\gamma_n)P_{n-1}(x)-(x-\eta_n)(x-a_2)\mathcal{C}_{n-1}(x;a_2)\right]+P_{n-1}(x).
	\end{align*}
Thus
	\begin{align*}
		&(x-a_1)\mathcal{U}_n^Q(x;a_1)+(\zeta_nx-\gamma_n)P_{n-1}(x)-(x-\eta_n)(x-a_2)\mathcal{C}_{n-1}(x;a_2)\\
		&=(x-a_1)P_n(x)-t_n(x-a_1)\mathcal{C}_{n-1}(x;a_1)+\alpha_n(x-a_1)P_{n-1}(x)-t_{n-1}\alpha_n(x-a_1)\mathcal{C}_{n-2}(x;a_1)\\
		&+(\zeta_nx-\gamma_n)P_{n-1}(x)-(x-\eta_n)P_n(x)+(x-\eta_n)\frac{P_n(a_2)}{P_{n-1}(a_2)}P_{n-1}(x).
	\end{align*}
	Using the expression for $xP_{n-1}(x)$ from the TTRR and combining the coefficients of  $P_{n-2}$, $P_{n-1}$ and $P_{n}$ we obtain
	\begin{align*}
		&(x-a_1)\mathcal{U}_n^Q(x;a_1)+(\zeta_nx-\gamma_n)P_{n-1}(x)-(x-\eta_n)(x-a_2)\mathcal{C}_{n-1}(x;a_2)\\
		&= \left(\zeta_n+\alpha_n+\eta_n-a_1-t_n+\frac{P_n(a_2)}{P_{n-1}(a_2)}\right)P_n(x)+\left(t_n\frac{P_n(a_1)}{P_{n-1}(a_1)}+\alpha_nc_n-\alpha_na_1-t_{n-1}\alpha_n\right.\\
		&\left.+\zeta_nc_n-\gamma_n+c_n\frac{P_n(a_2)}{P_{n-1}(a_2)}-\eta_n\frac{P_n(a_2)}{P_{n-1}(a_2)}\right)P_{n-1}(x)+\left(t_{n-1}\alpha_n\frac{P_{n-1}(a_1)}{P_{n-2}(a_1)}+\zeta_n\lambda_{n}+\alpha_n\lambda_{n}\right.\\
		&\left.+\lambda_{n}\frac{P_n(a_2)}{P_{n-1}(a_2)}\right)P_{n-2}(x).
	\end{align*}
Since $P_{n-2}$, $P_{n-1}$ and $P_{n}$ are linearly independent the above expression vanishes by choosing $\zeta_n$, $\eta_n$ and $\gamma_n$ as follows
\begin{align*}
	&\zeta_n=\frac{1}{\lambda_{n}}\left[-\alpha_n\lambda_{n}-\lambda_{n}\frac{P_n(a_2)}{P_{n-1}(a_2)}-t_{n-1}\alpha_n\frac{P_{n-1}(a_1)}{P_{n-2}(a_1)}\right],\\
	&\eta_n=a_1+t_n-\alpha_n-\frac{P_n(a_2)}{P_{n-1}(a_2)}+\frac{1}{\lambda_{n}}\left[\alpha_n\lambda_{n}+\lambda_{n}\frac{P_n(a_2)}{P_{n-1}(a_2)}+t_{n-1}\alpha_n\frac{P_{n-1}(a_1)}{P_{n-2}(a_1)}\right]
\end{align*}
and
\begin{align*}
	\gamma_n=\zeta_nc_n-\eta_n\frac{P_n(a_2)}{P_{n-1}(a_2)}-\alpha_na_1-t_{n-1}\alpha_n+t_n\frac{P_n(a_1)}{P_{n-1}(a_1)}+\alpha_nc_n+c_n\frac{P_n(a_2)}{P_{n-1}(a_2)}.
\end{align*}
This completes the proof.
\end{proof}

%%%%%%%%%%%%%%%%%%%%%%%%%%%%%%%%%%%%
 The next theorem addresses how to recover the orthogonal polynomials $P_n(x)$ through a linear combination of the monic quasi-Uvarov polynomials of order one and the monic polynomials generated by the Geronimus transformation.
\begin{theorem}Let $\mathcal{U}_{n}^{Q}(x;a_1)$ be a quasi-Uvarov polynomial of order one for some $a_1\in\mathbb{R}$. Further, suppose $\{\mathcal{G}_{n}(x;a_2)\}_{n=0}^{\infty}$ is a sequence of orthogonal polynomials with respect to $\mathcal{L}^G$ at $a_2\in\mathbb{R}$.Then there exist sequences $\{\zeta_n\}_{n=0}^{\infty},$ $\{\gamma_n\}_{n=0}^{\infty}$ and $\{\eta_n\}_{n=0}^{\infty}$ such that
	\begin{align*}
	P_{n}(x)=	\frac{x-\eta_n}{\zeta_nx-\gamma_n}\mathcal{G}_n(x;a_2)-\frac{x-a_1}{\zeta_nx-\gamma_n}\mathcal{U}_n^Q(x;a_1), ~n\geq0.
	\end{align*}

\end{theorem}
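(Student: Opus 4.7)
The plan mirrors the template of the three preceding recovery results in this section. Define
$$\mathcal{U}_n^G(a_1,a_2;x) := \frac{x-\eta_n}{\zeta_n x-\gamma_n}\mathcal{G}_n(x;a_2) - \frac{x-a_1}{\zeta_n x-\gamma_n}\mathcal{U}_n^Q(x;a_1),$$
and rewrite it by adding and subtracting an appropriate $P$-term so as to isolate a remainder:
$$\mathcal{U}_n^G(a_1,a_2;x) = P_{n-1}(x) + \frac{(x-\eta_n)\mathcal{G}_n(x;a_2) - (x-a_1)\mathcal{U}_n^Q(x;a_1) - (\zeta_n x-\gamma_n)P_{n-1}(x)}{\zeta_n x-\gamma_n}.$$
The task reduces to choosing $\zeta_n,\gamma_n,\eta_n$ so that the numerator of the fraction vanishes identically; this will give the claimed identity (up to the index bookkeeping used in the analogous Christoffel case).

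The key computational step is to expand every ingredient in the basis $\{P_{n+1},P_n,P_{n-1},P_{n-2}\}$. For the Geronimus piece, the relation $\mathcal{G}_n(x;a_2)=P_n(x)+\chi_n(a_2)P_{n-1}(x)$ together with the TTRR $xP_n=P_{n+1}+c_{n+1}P_n+\lambda_{n+1}P_{n-1}$ and $xP_{n-1}=P_n+c_nP_{n-1}+\lambda_n P_{n-2}$ yields $(x-\eta_n)\mathcal{G}_n(x;a_2)$ as an explicit linear combination in this basis. For the quasi-Uvarov piece I would write $\mathcal{U}_n^Q = \mathcal{U}_n + \alpha_n\mathcal{U}_{n-1}$, substitute $\mathcal{U}_k(x;a_1)=P_k(x)-t_k\mathcal{C}_{k-1}(x;a_1)$, and use the Christoffel identity $(x-a_1)\mathcal{C}_{k-1}(x;a_1)=P_k(x)-\tfrac{P_k(a_1)}{P_{k-1}(a_1)}P_{k-1}(x)$, then pass once more through the TTRR to collect into the same four-term basis. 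Because both $(x-\eta_n)\mathcal{G}_n$ and $(x-a_1)\mathcal{U}_n^Q$ are monic of degree $n+1$, their $P_{n+1}$ coefficients cancel, so the remainder is a polynomial of degree at most $n$. Finally, $(\zeta_n x-\gamma_n)P_{n-1}(x)$ expands via the TTRR as $\zeta_n P_n + (\zeta_n c_n-\gamma_n)P_{n-1} + \zeta_n\lambda_n P_{n-2}$.

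Setting the coefficients of $P_n$, $P_{n-1}$, $P_{n-2}$ of the numerator equal to zero then produces a $3\times 3$ linear system in the three unknowns $\zeta_n,\eta_n,\gamma_n$, which one solves explicitly; the $P_n$ and $P_{n-2}$ equations give two expressions for $\zeta_n$ from which $\eta_n$ is determined, and the $P_{n-1}$ equation then pins down $\gamma_n$. By the linear independence of $\{P_{n-2},P_{n-1},P_n\}$ these three assignments force the numerator to vanish identically, which gives the desired representation.

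The principal obstacle is purely computational bookkeeping: one must track many terms involving $\chi_n(a_2)$, $t_n$, $t_{n-1}$, $\alpha_n$, and the ratios $P_k(a_1)/P_{k-1}(a_1)$ through two successive TTRR reductions. A secondary point to check is that the three coefficient equations are genuinely consistent (not overdetermined); this is guaranteed by the monicity cancellation that removes the $P_{n+1}$ coefficient, leaving exactly three equations for the three unknowns. Closed-form expressions for $\zeta_n,\eta_n,\gamma_n$ then follow by direct analogy with the formulas obtained at the end of the proof of the preceding quasi-Uvarov plus Christoffel recovery theorem.
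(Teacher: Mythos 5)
Your proposal follows essentially the same route as the paper's proof: isolate $P_{n-1}(x)$, expand $(x-a_1)\mathcal{U}_n^Q(x;a_1)$ via $\mathcal{U}_k=P_k-t_k\mathcal{C}_{k-1}$ and the kernel identity, expand $(x-\eta_n)\mathcal{G}_n(x;a_2)$ via $\mathcal{G}_n=P_n+\chi_n(a_2)P_{n-1}$, reduce through the TTRR, and solve the resulting (triangular) linear system for $\zeta_n,\eta_n,\gamma_n$ so that the remainder vanishes by linear independence of $P_n,P_{n-1},P_{n-2}$ --- the paper packages the same vanishing as a scalar multiple of the TTRR identity $P_n-(x-c_n)P_{n-1}+\lambda_nP_{n-2}=0$. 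Your remark that the recovered polynomial is $P_{n-1}(x)$ rather than $P_n(x)$ is consistent with what the paper's own computation actually yields.
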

\begin{proof} %If there exists a sequence $\{\gamma_n\}$ and $\{\eta_n\}$ such that the sequence of polynomials $\{{\mathcal{G}}_n^{C}(a_1,a_2;x)\}_{n=0}^\infty$ is orthogonal with respect to the linear functional $\mathcal{L}$, then by uniqueness theorem of orthogonal polynomials, $\{{\mathcal{G}}_n^{C}(a_1,a_2;x)\}_{n=0}^\infty$ and $\{\mathcal{P}_n(x)\}_{n=0}^{\infty}$ are the same system of orthogonal polynomials and vice-versa.

Let consider
		\begin{align*}
		&\mathcal{U}_n^C(a_1,a_2;x)=	\frac{x-\eta_n}{\zeta_nx-\gamma_n}\mathcal{G}_n(x;a_2)-\frac{x-a_1}{\zeta_nx-\gamma_n}\mathcal{U}_n^Q(x;a_1)\\
		&=\frac{-1}{\zeta_nx-\gamma_n}\left[(x-a_1)\mathcal{U}_n^Q(x;a_1)+(\zeta_nx-\gamma_n)P_{n-1}(x)-(x-\eta_n)\mathcal{G}_n(x;a_2)\right]+P_{n-1}(x).
	\end{align*}
Then
	\begin{align*}
		&(x-a_1)\mathcal{U}_n^Q(x;a_1)+(\zeta_nx-\gamma_n)P_{n-1}(x)-(x-\eta_n)\mathcal{G}_n(x;a_2)\\
		&=(x-a_1)P_n(x)-t_n(x-a_1)\mathcal{C}_{n-1}(x;a_1)+\alpha_n(x-a_1)P_{n-1}(x)-t_{n-1}\alpha_n(x-a_1)\mathcal{C}_{n-2}(x;a_1)\\
		&+(\zeta_nx-\gamma_n)P_{n-1}(x)-(x-\eta_n)(P_n(x)+\chi_n(a_2)P_{n-1}(x))\\
		&=(x-a_1)P_n(x)-t_nP_n(x)+t_n\frac{P_n(a_1)}{P_{n-1}(a_1)}P_{n-1}(x)+\alpha_n(x-a_1)P_{n-1}(x)-t_{n-1}\alpha_nP_{n-1}(x)\\
		&+t_{n-1}\alpha_n\frac{P_{n-1}(a_1)}{P_{n-2}(a_1)}P_{n-2}(x)+(\zeta_nx-\gamma_n)P_{n-1}(x)-(x-\eta_n)(P_n(x)+\chi_n(a_2)P_{n-1}(x))\\
		&=(\eta_n-t_n-a_1)\left[P_n(x)-\left(\left(\frac{\chi_n(a_2)-\alpha_n-\zeta_n}{\eta_n-t_n-a_1}\right)x\right.\right.\\ &\left.\left.-\frac{\eta_n\chi_n(a_2)-\gamma_n+t_n\frac{P_n(a_1)}{P_{n-1}(a_1)}-t_{n-1}\alpha_n-\alpha_na_1}{\eta_n-t_n-a_1}\right)P_{n-1}(x)+\frac{t_n\alpha_n}{\eta_n-t_n-a_1}\frac{P_n(a_1)}{P_{n-1}(a_1)}P_{n-2}(x)\right].
	\end{align*}
By choosing $\eta_n$, $\gamma_n$ and  $\zeta_n$ as
\begin{align*}
	&\eta_n=\frac{1}{\lambda_{n}P_{n-1}(a_1)}\left[t_n\alpha_nP_n(a_1)+\lambda_{n}t_nP_{n-1}(a_1)+\lambda_{n}a_1P_{n-1}(a_1)\right],\\
	&\gamma_n=\eta_n\chi_n(a_2)-\alpha_na_1-t_{n-1}\alpha_n+t_n\frac{P_n(a_1)}{P_{n-1}(a_1)}-c_n(\eta_n-t_n-a_1),
\end{align*}
and
\begin{align*}
    \zeta_n=\chi_n(a_2)-\alpha_n+t_n+a_1-\frac{1}{\lambda_{n}P_{n-1}(a_1)}\left[t_n\alpha_nP_n(a_1)+\lambda_{n}t_nP_{n-1}(a_1)+\lambda_{n}a_1P_{n-1}(a_1)\right],
\end{align*}
we get
\begin{align*}
	(x-a_1)\mathcal{U}_n^Q(x;a_1)+(\zeta_nx-\gamma_n)P_{n-1}(x)&-(x-\gamma_n)\mathcal{G}_n(x;a_2)=(\eta_n-t_n-a_1)\left(P_n(x)\right.\\
	&\left.-(x-c_n)P_{n-1}(x)+\lambda_{n}P_{n-2}(x)\right).
\end{align*}
Since $P_n(x)$ satisfies the TTRR, hence we obtain the desired result.
\end{proof}
%%%%%%%%%%%%%%%%%%%%%%%%%%%%%%%%%%%
\section{Numerical experiments}\label{Numerical experiment}

Let $\{\mathcal{L}^{(\alpha)}_n(x)\}_{n=0}^{\infty}$ represent a sequence of monic Laguerre polynomials \cite{Chihara book}, defined by
\begin{align}
	\mathcal{L}^{(\alpha)}_n(x)=(-1)^n\Gamma(n+1)\sum_{j=0}^{n}\frac{(-1)^j}{\Gamma(j+1)} \left(\begin{array}{c}
		n+\alpha \\
		n-j
	\end{array}\right) x^j.
\end{align}

The monic Laguerre polynomials constitute an orthogonal set on the interval $(0,\infty)$ with respect to the weight function $w(x;\alpha)=x^{\alpha}e^{-x}$, $\alpha>-1$. These polynomials follow a three-term recurrence relation \cite[page 154]{Chihara book} given by

\begin{align}
	\mathcal{L}^{(\alpha)}_{n+1}(x)=(x-c_{n+1})\mathcal{L}^{(\alpha)}_{n}(x)-\lambda_{n+1}\mathcal{L}^{(\alpha)}_{n-1}(x),
\end{align}
with initial conditions $\mathcal{L}^{(\alpha)}_{-1}(x)=0, \mathcal{L}^{(\alpha)}_{0}(x)=1$. The recurrence relation is characterized by the parameters $c_{n+1}=2n+\alpha+1$ and $\lambda_{n+1}=n(n+\alpha)$.\\
\subsection{The Geronimus Case}
Upon applying the Geronimus transformation \eqref{Geronimus linear funtional} to the Laguerre linear functional with parameter $\alpha$ when $a=0$ and $M=\Gamma(\alpha)$, the resulting transformed weight is  $w(x;\alpha)=x^{\alpha-1}e^{-x}$ for $\alpha>0$, which corresponds to the Laguerre weight with parameter $\alpha-1.$ Consequently, the Geronimus polynomial is expressed as:

\begin{align}\label{Geronimus polynomial of Laguerre}
	\mathcal{G}_n(x;0):=\mathcal{L}^{(\alpha-1)}_n(x)=(-1)^n\Gamma(n+1)\sum_{j=0}^{n}\frac{(-1)^j}{\Gamma(j+1)} \left(\begin{array}{c}
		n+\alpha-1 \\
		n-j
	\end{array}\right) x^j.
\end{align}
This polynomial satisfies a TTRR given by:
\begin{align}
	\mathcal{G}_{n+1}(x;0)=(x-c^g_{n+1})\mathcal{G}_n(x;0)-\lambda^g_{n+1}\mathcal{G}_{n-1}(x;0),
\end{align}
where $c^g_{n+1}=2n+\alpha$ and $\lambda^g_{n+1}=n(n+\alpha-1)$.\\

The Geronimus polynomial \eqref{Geronimus polynomial of Laguerre} associated with the Laguerre weight can be decomposed and expressed as \eqref{Geronimus polynomial}. This decomposition is obtained by comparing coefficients and utilizing Pascal's rule. Through this process, we derive the result $\chi_n(0)=n$.\\

The monic quasi-Geronimus Laguerre polynomial of order one is given by
\begin{align}\label{quasi Geronimus Laguerre poly}
	\mathcal{G}^Q_{n+1}(x;0)=\mathcal{G}_{n+1}(x;0)+\beta_{n+1}\mathcal{G}_{n}(x;0)=\mathcal{L}^{(\alpha-1)}_{n+1}(x)+\beta_{n+1}\mathcal{L}^{(\alpha-1)}_n(x).
\end{align}
It is a well-known result from \cite[Theorem 5.2]{Chihara book} that at most one zero of a quasi-orthogonal polynomial of order one lies outside the support of the measure of orthogonality. Table \ref{zeros of quasi-Geronimus Laguerre} illustrates this behavior.

%%%%%%%%%%%%%%%%%%%%%%%%%%%%%%%%%%%%%%%%%%%%%%%%%%%%%%%%%%
\begin{table}[ht]
	\begin{center}
	\resizebox{!}{1.6cm}{\begin{tabular}{|c|c|c|c|}
			\hline
			\multicolumn{4}{|c|}{Zeros of $\mathcal{G}^Q_{6}(x;0)$}\\
			\hline
			$\beta_n=0.5$, $\alpha=0.9$ &$\beta_n=1$, $\alpha=1.5$&$\beta_n=7$, $\alpha=1$ &$\beta_n=6$, $\alpha=0.1$\\
			\hline
			0.193294&0.355981&-0.248125&-0.0584409\\
			\hline
			1.11293&1.44484&0.475247&0.108916\\
			\hline
			2.86119&3.3362&1.96233&1.31668\\
			\hline
			5.57689&6.17578&4.45828&3.53458\\
			\hline
			9.5578&10.2685&8.24579&7.05567\\
			\hline
			15.5979&16.4187&14.1065&12.6426\\
			\hline
		\end{tabular}}
		\captionof{table}{Zeros of  $\mathcal{G}^Q_{6}(x;0)$}
		\label{zeros of quasi-Geronimus Laguerre}
	\end{center}
\end{table}
%%%%%%%%%%%%%%%%%%%%%%%%%%%%%%%%%%%%%%%
We see that for $\beta_n=0.5$ and $\alpha=0.9$, all zeros of $\mathcal{G}^Q_{6}(x;0)$ are within the support of the Laguerre weight. Similarly, for $\beta_n=1$ and $\alpha=1.5$, the zeros also lie within the support. However, when $\beta_{n}=7$ and $\alpha=1$, exactly one zero of $\mathcal{G}^Q_{6}(x;0)$ is outside the support, as it is shown in Table \ref{zeros of quasi-Geronimus Laguerre}. The same holds true for $\beta_{n}=6$ and $\alpha=0.1$. \\

Consider a specific choice for $\beta_{n}$, namely $\beta_n=n$. In this case, the quasi-Geronimus polynomial of order becomes the monic Laguerre polynomial of degree $n+1$ with parameter $\alpha-2$. Additionally, we can determine the recurrence coefficients required to express the difference equation in Theorem \ref{Difference equation for Quasi-Geronimus} for $\mathcal{G}^Q_{n+1}(x;0)$. These coefficients can be easily obtained by using the values of $c_n, \lambda_{n}$, and $\chi_{n}(0)$. Indeed,
%Moreover, we can find the recurrence coefficients that are necessary to write the difference equation in the Theorem \ref{Difference equation for Quasi-Geronimus} for $\mathcal{G}^Q_{n+1}(x;0)$. The recurrence coefficients can be easily obtain by using the value of $c_n, \lambda_{n}$ and $\chi_{n}(0)$. Indeed, the coefficients are
\begin{align*}
	l_n(x)&=x-\alpha-n+\beta_{n+1},\\
	d_n(x)&=\beta_{n}\frac{x-n}{n+\alpha-1}+n,\\
	j_n(x)&=l_n(x)d_n(x)-(2n+\alpha+1+n(n+\alpha)-n\beta_{n+1})\left(\frac{\beta_{n}}{n+\alpha-1}-1\right),\\
	m_{n}(x)&=l_{n-1}(x)(x-2n-\alpha+1)+\beta_{n+1}n-n(n+\alpha).
\end{align*}
Therefore, the difference equation reads as
\begin{align*}
	j_n(x)&\mathcal{G}_{n+2}^{Q}(x;0)=\left(d_n(x)m_{n+1}(x)-n(n+\alpha)l_{n+1}(x)\left(\frac{\beta_n}{(n+\alpha-1)}-1\right)\right)\mathcal{G}_{n+1}^{Q}(x;0)\\
	&+\huge(m_{n+1}(x)(2n+\alpha+1+n(n+\alpha)-n\beta_{n+1})-n(n+\alpha)l_{n}(x)l_{n+1}(x))\huge\mathcal{G}_{n}^{Q}(x;0).
\end{align*}
In order to establish the orthogonality of the quasi-Geronimus polynomials of order one with parameter $\alpha-1$, it is necessary to reduce the degree of recurrence coefficients in the aforementioned difference equation. This reduction is achieved by calculating the sequence of constants $\beta_n$  such that \eqref{beta n restriction cond} holds. The specific condition is given by the equation
\begin{align}
\nonumber \beta_n(2+\beta_n-\beta_{n+1})+\frac{\beta_n}{\beta_{n-1}}(n-1)(n+\alpha-2)-n(n+\alpha-1)=0,\\
	(2+\beta_n-\beta_{n+1})+\frac{1}{\beta_{n-1}}(n-1)(n+\alpha-2)-\frac{1}{\beta_n}n(n+\alpha-1)=0.
\end{align}
Summing over $j=2$ to $n+1$, the equation becomes
\begin{align*}
	(2n+\beta_2-\beta_{n+2})+\frac{1}{\beta_{1}}\alpha-\frac{1}{\beta_{n+1}}(n+1)(n+\alpha)=0.
\end{align*}
By choosing $\beta_1=\alpha$ and $\beta_2=\alpha+1$, we recursively obtain $\beta_{n}=n+\alpha-1$. Thus, by Proposition \ref{orthogonality of quasi-Geronimus}, the monic quasi-Geronimus Laguerre polynomial of order one, denoted as $\mathcal{G}^Q_{n+1}(x;0)$, given by

 \begin{align}\label{quasi Geronimus orthogonal Laguerre}
 	\mathcal{G}^Q_{n+1}(x;0)=\mathcal{G}_{n+1}(x;0)+(n+\alpha)\mathcal{G}_{n}(x;0)=\mathcal{L}^{(\alpha-1)}_{n+1}(x)+(n+\alpha)\mathcal{L}^{(\alpha-1)}_n(x),
 \end{align}
satisfies the TTRR with  coefficients
\begin{align*}
	\lambda_{n+1}^{qg}=(n-1)(n+\alpha-1)>0,~~  c_{n+1}^{qg}=2n+\alpha-1.
\end{align*}

%For any fixed value of $N$, we observe that $C^{(1)}=\beta_{2}+\frac{\lambda_{2}^g}{\beta_{1}}-c_2^g=0$, which lies outside the support of the Laguerre linear functional. Consequently, utilizing \eqref{Stieltjes integral for beta n}, we can express
%\begin{align}
%	\frac{1}{N}=-\int_{0}^{\infty}\frac{d\tilde{\mu}^{(N)}(x)}{x}.
%\end{align}
%Therefore, we can deduce the orthogonality measure for each specific value of $N\geq2$ as $d\tilde{\mu}^{(N)}(x)=-xe^{-Nx}$.\\

 As observed, the sequence of quasi-Geronimus Laguerre polynomial of order one is orthogonal when $\beta_{n+1}=n+\alpha$. Consequently, the zeros of $\mathcal{G}^Q_{n+1}(x;0)$ are within the interval $(0,\infty)$. In Table \ref{Zeros of quasi-Geronimus orthogonal Laguerre}, we illustrate that, for $\alpha=0.1$ and $\alpha=0.5$, the zeros of $\mathcal{G}^Q_{5}(x;0)$  lie within the interval $(0,\infty)$, with one zero positioned exactly on the boundary of the support. Additionally, Figure \ref{Interlace_Zeros_quasi-Geronimus orthogonal Laguerre} and Table \ref{Interlacing of quasi-Geronimus orthogonal Laguerre} show that for $\alpha=1$, the zeros of $\mathcal{G}^Q_{5}(x;0)$ and $\mathcal{G}^Q_{6}(x;0)$ interlace within the interval $(0, \infty)$.
\vspace{0.5cm}

\begin{small}
	\begin{minipage}[c]{0.4\textwidth}
		%\centering
		\hspace{-0.3cm}		\resizebox{!}{1.5cm}{\begin{tabular}{|c|c|}
					\hline
				\multicolumn{2}{|c|}{Zeros of $\mathcal{G}^Q_{5}(x;0)$}\\
				\hline
				$\alpha=0.1$, $n=4$, $\beta_{n+1}=n+\alpha$  &$\alpha=0.5$, $n=4$, $\beta_{n+1}=n+\alpha$ \\
				\hline
				0&0\\
				\hline
				0.36103&0.523526\\
				\hline
				1.8276&2.15665\\
				\hline
				4.65741&5.13739\\
				\hline
				9.55395&10.1824\\
				\hline
				-&-\\
				\hline
			\end{tabular}}
		\captionof{table}{Zeros of $\mathcal{G}^Q_{5}(x;0)$}
		\label{Zeros of quasi-Geronimus orthogonal Laguerre}
	\end{minipage}
\end{small}
%%%%%%%%%%%%%%%%%%%%%%%%%%%%%%%%%%%%%%%%%%%%%%%%%%%%%%%%%%%%%%%%%
\begin{minipage}[c]{0.4\textwidth}
	%\centering
	\hspace{1cm}\resizebox{!}{1.5cm}{\begin{tabular}{|c|c|}
				\hline
			\multicolumn{2}{|c|}{Zeros of $\mathcal{G}^Q_{n}(x;0)$}\\
			\hline
			$\alpha=1$, $n=5$, $\beta_{n}=n+\alpha-1$  &$\alpha=1$, $n=6$, $\beta_{n}=n+\alpha-1$ \\
				\hline
			0&0\\
			\hline
			0.74329&0.61703\\
			\hline
			2.57164&2.11297\\
			\hline
			5.73118&4.61083\\
			\hline
			10.9539&8.39907\\
			\hline
			-&14.2601\\
			\hline
	\end{tabular}}
	\captionof{table}{Zeros of $\mathcal{G}^Q_{n}(x;0)$}
	\label{Interlacing of quasi-Geronimus orthogonal Laguerre}
\end{minipage}
%%%%%%%%%%%%%%%%%%%%%%%%%%%%%%%%%%%%%%%%%%%%%%%%%%
\begin{figure}[h!]
	\includegraphics[scale=1]{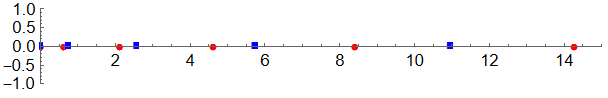}
	\caption{Zeros of $\mathcal{G}^Q_{5}(x;0)$ (blue squares) and $\mathcal{G}^Q_{6}(x;0)$ (red circles).}
	\label{Interlace_Zeros_quasi-Geronimus orthogonal Laguerre}
\end{figure}\\
%%%%%%%%%%%%%%%%%%%%%%%%%%%%%%%%%%%%%%%%%%%%%%%%%%%%%%
The  particular case of Geronimus transformation applied to the Laguerre polynomials with parameter $\alpha$ described above yields  the Laguerre polynomials with parameter $\alpha-1$. In Figure \ref{Interlacing_Geronimus Laguerre and quasi-Geronimus} and Table \ref{Zeros_Laguerre_Quasi Geronimus Laguerre}, we show that, for $\alpha=1$, the zeros of $\mathcal{L}^{\alpha-1}_5(x)$ and $\mathcal{G}^Q_{5}(x;0)$, where $\beta_{n}=n+\alpha$, interlace.

 %%%%%%%%%%%%%%%%%%%%%%%%%%%%%%%%%%%%%%%%%%%%%%%%%%%
 \begin{table}[ht]
 	\begin{center}
 	\resizebox{!}{1.7cm}{\begin{tabular}{|c|c|}
 			\hline
 			$\mathcal{G}^Q_{n+1}(x;0)$&$\mathcal{L}^{(\alpha-1)}_{n+1}(x)$\\
 			\hline
 			$n=4$, $\alpha=1$, $\beta_{n+1}=n+\alpha$ &$n=4$, $\alpha=1$\\
 			\hline
 			0&0.26356\\
 			\hline
 			0.743292&1.41340\\
 			\hline
 			2.57164&3.59643\\
 			\hline
 			5.73118&7.08581\\
 			\hline
 			10.9539&12.6408\\
 			\hline
 		\end{tabular}}
 		\captionof{table}{Interlacing of $\mathcal{G}^Q_{n+1}(x;0)$ and $\mathcal{L}^{(\alpha-1)}_{n+1}(x)$}
 		\label{Zeros_Laguerre_Quasi Geronimus Laguerre}
 	\end{center}
 \end{table}
%%%%%%%%%%%%%%%%%%%%%%%%%%%%%%%%%%%%%%%%%%%%%%%%%%

\begin{figure}[h!]
	\includegraphics[scale=1]{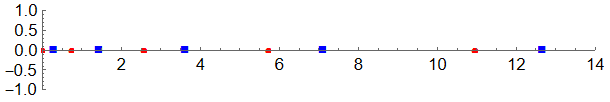}
	\caption{Zeros of $\mathcal{L}^{(0)}_{5}(x)$ (blue squares) and $\mathcal{G}^Q_{5}(x;0)$ (red circles).}
	\label{Interlacing_Geronimus Laguerre and quasi-Geronimus}
\end{figure}
%%%%%%%%%%%%%%%%%%%%%%%%%%%%%%%%%%%%%%%%%%%%%%%%%%%%%%%%
For further analysis, we consider the monic case of the quasi-Geronimus polynomial of order one  $\mathcal{G}_n^Q(x;a)$ given in \eqref{Non monic Quasi-Geronimus order 1}, where $b_n=1$. Hence \eqref{Non monic Quasi-Geronimus order 1} can be rewritten as:
\begin{align}\label{monic quasi Geronimus poly}
	\mathcal{G}_n^Q(x;a)=\mathcal{G}_n(x;a)+k_n\mathcal{G}_{n-1}(x;a).
\end{align}
Note that $\beta_n$ is replaced by $k_n$, because specific condition on $\beta_n$ given by \eqref{beta n restriction cond}  turns the quasi-Geronimus polynomial of order one $\mathcal{G}_n^Q(x;a)$ to quasi-Geronimus orthogonal polynomial of order one, which we denote as $\widetilde{\mathcal{G}}_n^Q(x;a)$. Note that the polynomial $\widetilde{\mathcal{G}}_n^Q(x;a)$ satisfies  Proposition \ref{orthogonality of quasi-Geronimus}, with condition on $\beta_n$ given by \eqref{beta n restriction cond}. Consequently, the coefficient $\beta_{n+1}$ involved in \eqref{quasi Geronimus Laguerre poly} should be replaced by $k_{n+1}$, resulting in the modified expression:
\begin{align}\label{quasi Geronimus Laguerre poly para kn}
	\mathcal{G}^Q_{n+1}(x;0)=\mathcal{G}_{n+1}(x;0)+k_{n+1}\mathcal{G}_{n}(x;0)=\mathcal{L}^{(\alpha-1)}_{n+1}(x)+k_{n+1}\mathcal{L}^{(\alpha-1)}_n(x).
\end{align}
Similarly, we can rewrite quasi-Geronimus orthogonal Laguerre polynomial of order one \eqref{quasi Geronimus orthogonal Laguerre} as:
\begin{align}\label{quasi Geronimus orthogonal Laguerre poly}
	\widetilde{\mathcal{G}}^{Q}_{n+1}(x;0)=\mathcal{G}_{n+1}(x;0)+\beta_{n+1}\mathcal{G}_{n}(x;0)=\mathcal{L}^{(\alpha-1)}_{n+1}(x)+(n+\alpha)\mathcal{L}^{(\alpha-1)}_n(x).
\end{align}

We know that at most one zero of a quasi-orthogonal polynomial of order one lies outside the interval of orthogonality \cite[Theorem 5.2]{Chihara book}. Specifically, Table \ref{zeros of quasi-Geronimus Laguerre} illustrates that either exactly one zero of the $\mathcal{G}^{Q}_{n}(x;0)$ defined in \eqref{quasi Geronimus Laguerre poly para kn}  can be negative, or all zeros of the $\mathcal{G}^{Q}_{n}(x;0)$ defined in \eqref{quasi Geronimus Laguerre poly para kn} can be positive. However, it is still unclear what conditions are necessary for the coefficient $k_n$, as indicated in \eqref{monic quasi Geronimus poly}, to result in exactly one zero lying outside the interval of orthogonality. Similarly, it is unknown under what circumstances all zeros lie within this interval without transforming the quasi-Geronimus polynomial of order one $\mathcal{G}^{Q}_{n}(x;a)$ into an orthogonal system.

To address this question, we graphically analyze the polynomial $\mathcal{G}_n^Q(x;0)$ defined in \eqref{quasi Geronimus Laguerre poly para kn}. We provide two cases to elucidate this matter further:\\

%%%%%%%%%%%%%%%%%%%%%%%%%%%%%%%%%%%%%%%%%%%%%%%%%%%%%%%%%%%%%%%%%%%%%
\begin{small}
	\begin{minipage}[c]{0.4\textwidth}
		%\centering
			\resizebox{!}{1.4cm}{\begin{tabular}{|c|c|}
				\hline
				\multicolumn{2}{|c|}{$\alpha=2.5$, $n=5$}\\
				\hline
				Zeros of $\mathcal{G}^Q_{n}(x;0)$, $k_5=25$   &Zeros of $\widetilde{\mathcal{G}}^{Q}_{n}(x;0)$, $\beta_n=n+\alpha-1$ \\
				\hline
				-15.4435&0\\
				\hline
				1.02900&1.48624\\
				\hline
				3.13601&3.83768\\
				\hline
				6.60152&7.48206\\
				\hline
				12.1768&13.194\\
				\hline
				-&-\\
				\hline
		\end{tabular}}
	\captionof{table} {$\mathcal{G}^Q_{n}(x;0)$\eqref{quasi Geronimus Laguerre poly para kn} and $\widetilde{\mathcal{G}}^{Q}_{n}(x;0)$ \eqref{quasi Geronimus orthogonal Laguerre poly}.}
		\label{big negative zero_QGLP}
	\end{minipage}
\end{small}
%%%%%%%%%%%%%%%%%%%%%%%%%%%%%%%%%%%%%%%%%%%%%%%%%%%%%%%%%%%%%%%%%
\vspace{1cm}
\begin{minipage}[c]{0.4\textwidth}
	%\centering
	\hspace{1cm}\resizebox{!}{1.4cm}{\begin{tabular}{|c|c|}
			\hline
			\multicolumn{2}{|c|}{$\alpha=3$, $n=6$}\\
			\hline
			Zeros of $\mathcal{G}^Q_{n}(x;0)$, $k_6=10$  &Zeros of $\widetilde{\mathcal{G}}^Q_{n}(x;0)$, $\beta_{n}=n+\alpha-1$ \\
			\hline
			-0.966814&0\\
			\hline
			1.30868&1.49055\\
			\hline
			3.37753&3.58133\\
			\hline
			6.41475&6.627\\
			\hline
			10.728&10.9444\\
			\hline
			17.1378&17.3567\\
			\hline
	\end{tabular}}
	\captionof{table}{$\mathcal{G}^Q_{n}(x;0)$\eqref{quasi Geronimus Laguerre poly para kn} and $\widetilde{\mathcal{G}}^Q_{n}(x;0)$ \eqref{quasi Geronimus orthogonal Laguerre poly}.}
	\label{small negative zero_QGLP}
\end{minipage}
%%%%%%%%%%%%%%%%%%%%%%%%%%%%%%%%%%%%%%%%%%%%%%%%%%%%%%%%%%%%%%%%%%%%%%%%%
 \begin{figure}[h!]
\centering
\begin{subfigure}{.4\textwidth}
	\hspace{-1.2cm}
	\includegraphics[width=1.2\linewidth]{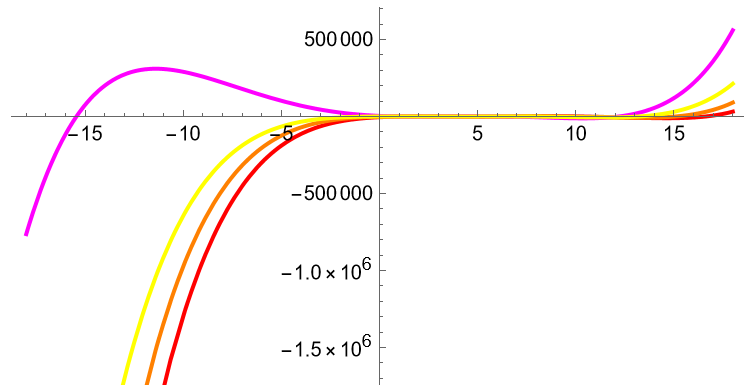}
	\caption{Graph of $\mathcal{L}_n^{(\alpha)}(x)$ (red), $\mathcal{L}_n^{(\alpha-1)}(x)$ (orange), $\mathcal{G}_n^{Q}(x;0)$   (magenta) and $\widetilde{\mathcal{G}}_n^Q(x;0)$  (yellow). $n=5, \alpha=2.5, k_5=25$.}
	\label{large gap b/w kn and n plus alpha-1}
\end{subfigure}
\hfil
\begin{subfigure}{.4\textwidth}
	\hspace{-0.7cm}
	\includegraphics[width=1.2\linewidth]{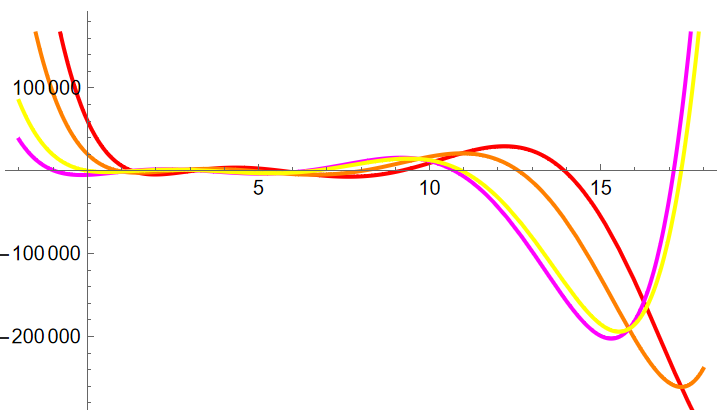}
	\caption{Graph of $\mathcal{L}_n^{(\alpha)}(x)$ (red), $\mathcal{L}_n^{(\alpha-1)}(x)$ (orange), $\mathcal{G}_n^{Q}(x;0)$  (magenta) and $\widetilde{\mathcal{G}}_n^Q(x;0)$  (yellow). $n=6, \alpha=3, k_6=10$.}
	\label{small gap b/w kn and n plus alpha-1}
\end{subfigure}
\caption{$\mathcal{G}_n^{Q}(x;0)$ \eqref{quasi Geronimus Laguerre poly para kn} and $\widetilde{\mathcal{G}}_n^Q(x;0)$ \eqref{quasi Geronimus orthogonal Laguerre poly}. For $k_n>n+\alpha-1$.}
\label{growth behavior when kn greater n plus alpha -1}
\end{figure}
%%%%%%%%%%%%%%%%%%%%%%%%%%%%%%%%%%%%%%%%%%%%%%%%%%%%%%%%%%%%%%%%%%%%%%%%%5

%%%%%%%%%%%%%%%%%%%%%%%%%%%%%%%%%%%%%%%%%%%%%%%%%%%%%%%%%%%%%%%%%%%%%%%%%
\textbf{Case 1: when} $\bf{k_n>n+\alpha-1}$. Figure \ref{growth behavior when kn greater n plus alpha -1} illustrates that when the coefficient  $k_n$ given in \eqref{quasi Geronimus Laguerre poly para kn} exceeds $n+\alpha-1$, for any fixed degree, the growth of the polynomial $\mathcal{G}_n^{Q}(x;0)$ (magenta) defined in \eqref{quasi Geronimus Laguerre poly para kn} surpasses the growth of the polynomial $\widetilde{\mathcal{G}}_n^Q(x;0)$ (yellow) defined in \eqref{quasi Geronimus orthogonal Laguerre poly} beyond the last intersection point of the corresponding polynomials. It is worth mentioning that a negative zero of the polynomial $\mathcal{G}_n^{Q}(x;0)$ arises only when $k_n$ exceeds $n+\alpha-1$. If $k_n$ is significantly larger than $n+\alpha-1$, this negative zero moves away from the first zero (i.e., $x=0$) of the polynomial $\widetilde{\mathcal{G}}_n^Q(x;0)$ at a faster rate. Whereas, if $k_n$ is greater but closer to $n+\alpha-1$, the negative zero remain in the vicinity of the origin. This phenomenon is illustrated in Table \ref{big negative zero_QGLP} and Table \ref{small negative zero_QGLP} for $n=5,6$.  However, the scenario changes entirely when we examine the growth of these two polynomials before reaching the first intersection point. For odd-degree polynomials, the graph of $\widetilde{\mathcal{G}}_n^Q(x;0)$ (yellow) always remains below the graph of $\mathcal{G}_n^{Q}(x;0)$ (magenta) prior to the first intersection point of the polynomials, as depicted in Figure \ref{large gap b/w kn and n plus alpha-1}. On the other hand, for even degrees, the graph of $\widetilde{\mathcal{G}}_n^Q(x;0)$ (yellow) grows faster than the polynomial $\mathcal{G}_n^{Q}(x;0)$ (magenta) before reaching the first intersection point of the corresponding polynomials, as demonstrated in Figure \ref{small gap b/w kn and n plus alpha-1}.\\

%%%%%%%%%%%%%%%%%%%%%%%%%%%%%%%%%%%%%%%%%%%%%%%%%%%%%%%%%%%%%%%%%%%%%%
\begin{small}
	\begin{minipage}[c]{0.4\textwidth}
		%\centering
		\resizebox{!}{1.25cm}{\begin{tabular}{|c|c|}
				\hline
				\multicolumn{2}{|c|}{$\alpha=4.5$, $n=5$}\\
				\hline
				Zeros of $\mathcal{G}^Q_{n}(x;0)$, $k_5=3$   &Zeros of $\widetilde{\mathcal{G}}^{Q}_{n}(x;0)$, $\beta_n=n+\alpha-1$ \\
				\hline
				1.54164&0\\
				\hline
				3.5791&2.61181\\
				\hline
				6.54479&5.56339\\
				\hline
				10.7668&9.76859\\
				\hline
				17.0676&16.0562\\
				\hline
		\end{tabular}}
		\captionof{table} {$\mathcal{G}^Q_{n}(x;0)$\eqref{quasi Geronimus Laguerre poly para kn} and $\widetilde{\mathcal{G}}^{Q}_{n}(x;0)$ \eqref{quasi Geronimus orthogonal Laguerre poly}.}
		\label{1 Zeros QGLP positive kn less n plus alpha-1}
	\end{minipage}
\end{small}
%%%%%%%%%%%%%%%%%%%%%%%%%%%%%%%%%%%%%%%%%%%%%%%%%%%%%%%%%%%%%%%%%
\vspace{1cm}
\begin{minipage}[c]{0.4\textwidth}
	%\centering
	\hspace{1cm}\resizebox{!}{1.25cm}{\begin{tabular}{|c|c|}
			\hline
			\multicolumn{2}{|c|}{$\alpha=3$, $n=4$}\\
			\hline
			Zeros of $\mathcal{G}^Q_{n}(x;0)$, $k_4=2$  &Zeros of $\widetilde{\mathcal{G}}^{Q}_{n}(x;0)$, $\beta_{n}=n+\alpha-1$ \\
			\hline
			1.07821&0\\
			\hline
			3.05525&2.14122\\
			\hline
			6.29734&5.31552\\
			\hline
			11.5692&10.5433\\
			\hline
			-&-\\
			\hline
	\end{tabular}}
	\captionof{table}{$\mathcal{G}^Q_{n}(x;0)$\eqref{quasi Geronimus Laguerre poly para kn} and $\widetilde{\mathcal{G}}^{Q}_{n}(x;0)$ \eqref{quasi Geronimus orthogonal Laguerre poly}.}
	\label{2 Zeros QGLP positive kn less n plus alpha-1}
\end{minipage}
%%%%%%%%%%%%%%%%%%%%%%%%%%%%%%%%%%%%%%%%%%%%%%%%%%%%%%%%%%%%%

\begin{figure}[h!]
	\begin{subfigure}{.4\textwidth}
		\hspace{-0.5cm}
		\includegraphics[width=1.2\linewidth]{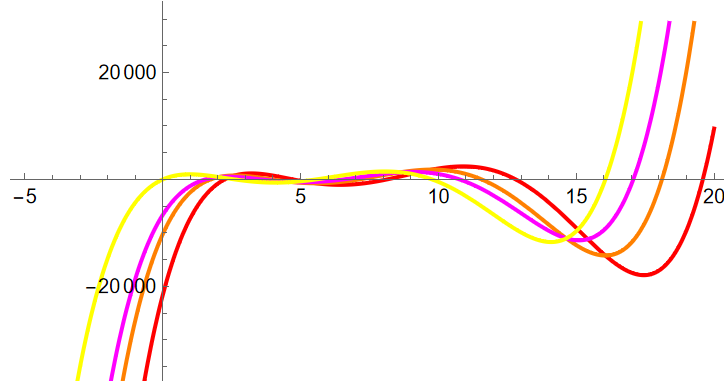}
		\caption{Graph of $\mathcal{L}_n^{(\alpha)}(x)$ (red), $\mathcal{L}_n^{(\alpha-1)}(x)$ (orange), $\mathcal{G}_n^{Q}(x;0)$ (magenta) and $\widetilde{\mathcal{G}}_n^Q(x;0)$ (yellow). $n=5, \alpha=4.5, k_5=3$.}
		\label{orthogonality domination 2}
	\end{subfigure}
	\hfil
	\begin{subfigure}{.4\textwidth}
		\hspace{-1cm}	\includegraphics[width=1.2\linewidth]{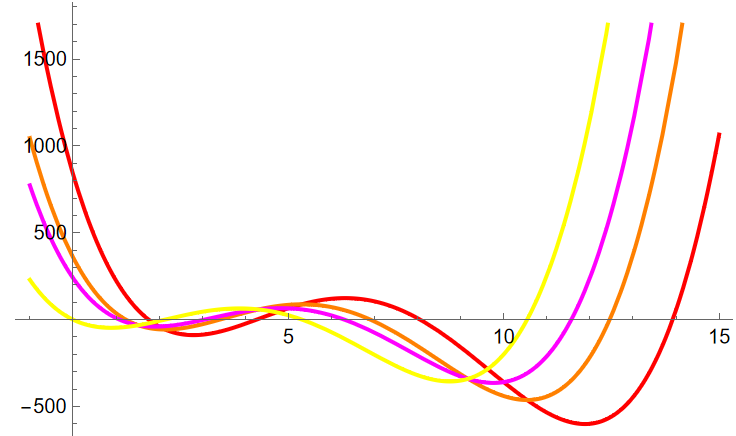}
		\caption{Graph of $\mathcal{L}_n^{(\alpha)}(x)$ (red), $\mathcal{L}_n^{(\alpha-1)}(x)$ (orange), $\mathcal{G}_n^{Q}(x;0)$ (magenta) and $\widetilde{\mathcal{G}}_n^Q(x;0)$ (yellow). $n=4, \alpha=3, k_4=2$.}
		\label{orthogonality domination 1}
	\end{subfigure}
		\caption{$\mathcal{G}_n^{Q}(x;0)$ \eqref{quasi Geronimus Laguerre poly para kn} and $\widetilde{\mathcal{G}}_n^Q(x;0)$ \eqref{quasi Geronimus orthogonal Laguerre poly}. For $k_n<n+\alpha-1$.}
	\label{growth behavior when kn less n plus alpha -1}
\end{figure}

%%%%%%%%%%%%%%%%%%%%%%%%%%%%%%%%%%%%%%%%%%%%%%%%%%%%%%%%%%%%%%%%%%%%%%%
\textbf{Case 2: when} $\bf{k_n<n+\alpha-1}$. The situation differs significantly compared to Case 1. It is evident from Table \ref{1 Zeros QGLP positive kn less n plus alpha-1} and Table \ref{2 Zeros QGLP positive kn less n plus alpha-1} that no negative zero of the polynomial $\mathcal{G}_n^Q(x;0)$ defined in \eqref{quasi Geronimus Laguerre poly para kn} occurs when $\beta_n<n+\alpha-1$. In other words, all the zeros of the polynomial $\mathcal{G}_n^Q(x;0)$  lie within the interval $(0,\infty)$. For any fixed degree, Table \ref{1 Zeros QGLP positive kn less n plus alpha-1} and Table \ref{2 Zeros QGLP positive kn less n plus alpha-1} illustrate that the first zero of the polynomial $\mathcal{G}_n^Q(x;0)$  does not fluctuate significantly. As expected from Case 1, the polynomial $\widetilde{\mathcal{G}}_n^Q(x;0)$ (yellow) tends to dominate the polynomial $\mathcal{G}_n^Q(x;0)$  (magenta) after the last intersection point when $k_n<n+\alpha-1$. For odd degrees, the polynomial $\mathcal{G}_n^Q(x;0)$ (magenta) defined in \eqref{quasi Geronimus Laguerre poly para kn} decays faster than the polynomial $\widetilde{\mathcal{G}}_n^Q(x;0)$ (yellow) defined in \eqref{quasi Geronimus orthogonal Laguerre poly} before reaching the first intersection point of the polynomials, as illustrated in Figure \ref{orthogonality domination 2}. On the other hand, for even degrees, the growth of the polynomial $\mathcal{G}_n^Q(x;0)$ (magenta) exceeds that of $\widetilde{\mathcal{G}}_n^Q(x;0)$(yellow)  before reaching the first intersection point of the polynomials, as demonstrated in Figure \ref{orthogonality domination 1}.\\

In this subsection, $\text{Mathematica}^{\text{\textregistered}}$  software is used to compute the zeros and graphically illustrate the zeros with interlacing properties.
\subsection{The Uvarov case}
When we apply the Christoffel transformation to the Laguerre weight $w(x;\alpha)=x^\alpha e^{-x}, \alpha>-1,$ with $a=0$, the transformed weight becomes $w(x;\alpha)=x^{\alpha+1} e^{-x}, \alpha>-1$. The corresponding Christoffel polynomial is given by
\begin{align*}
	\mathcal{C}_n(x;0):=\mathcal{L}^{(\alpha+1)}_n(x)=(-1)^n\Gamma(n+1)\sum_{j=0}^{n}\frac{(-1)^j}{\Gamma(j+1)} \left(\begin{array}{c}
		n+\alpha+1 \\
		n-j
	\end{array}\right) x^j.
\end{align*}
Moreover, from \eqref{Other form of kernel polynomial}, we can calculate the monic kernel polynomials at $x=0$ as
\begin{align}\
	\mathcal{K}_{n-1}(0,0)=\frac{\mathcal{L}^{(\alpha)}_{n-1}(0)}{\lambda_1\lambda_2...\lambda_{n}}\mathcal{C}_{n-1}(0;0)=\left(\begin{array}{c}
		n+\alpha \\
		n-1
	\end{array}\right).
\end{align}
The Uvarov transformation of the Laguerre weight at $a=0$ and $M=1$ is given by
\begin{align*}
	w^u(x;0)=x^{\alpha}e^{-x}+\delta(x-0).
\end{align*}
The corresponding Uvarov polynomial  can be expressed as
\begin{align*}
	\mathcal{U}_n(x;0)=\mathcal{L}^{(\alpha)}_n(x)-t_n\mathcal{L}^{(\alpha+1)}_n(x), ~n\geq 1,
\end{align*}
where $t_n$ can be calculated as follows
\begin{align*}
	t_n=\frac{\mathcal{L}^{(\alpha)}_n(0)\mathcal{L}^{(\alpha)}_{n-1}(0)}{\lambda_1\lambda_2...\lambda_{n}\left(1 + \mathcal{K}_{n-1}(0,0)\right)}=-(\alpha+1)\frac{\left(\begin{array}{c}
			n+\alpha \\
			n-1
		\end{array}\right)}{1+\left(\begin{array}{c}
		n+\alpha \\
		n-1
	\end{array}\right)}.
\end{align*}
The recurrence coefficients for the sequence of quasi-Uvarov polynomials of order one can be deduced  using $\lambda_n, c_n$, and $t_n.$  In such a way the sequence satisfies the difference equation obtained in Theorem \ref{Difference equation for Quasi-Uvarov}. The specific recurrence coefficients can be deduced as follows:
The quasi-Uvarov polynomial of order one is given by
\begin{align}
	\mathcal{U}_n^Q(x;0)=\mathcal{U}_n(x;0)+\alpha_n\mathcal{U}_{n-1}(x;0).
\end{align}
The recurrence coefficients for the sequence of quasi-Uvarov polynomials of order one can be computed by using $\lambda_n, c_n$, and $t_n.$, In such a way  the sequence satisfies the difference equation obtained in Theorem \ref{Difference equation for Quasi-Uvarov}. The specific recurrence coefficients are

\begin{align*}
	e_n(x)&=\alpha_n\left(1-\frac{nt_n}{n-1}\right)x-n-\alpha,\\
	s_n(x)&=(x-t_{n+1})(x-2n-\alpha-1)+x\alpha_{n+1}+(n+\alpha+1)t_{n+1}-t_n\alpha_{n+1},\\
	w_n(x)&=\left(x-t_n+\frac{\alpha t_n}{n-1}\right)((x-t_{n+1})n+t_{n+1}\alpha_{n+1})(n+\alpha)+s_n(x)e_n(x),\\
	y_n(x)&=-x+\frac{t_n(n-1+\alpha_n)}{n-1},\\
	h_n(x)&=(nx-nt_{n+1}+t_{n+1}\alpha_{n+1})(n+\alpha),\\
	r_n(x)&=s_n(x)(x-2n-\alpha+1)-(nx-nt_{n+1}+t_{n+1}\alpha_{n+1})(n+\alpha).
\end{align*}
Therefore, the difference equation satisfied by the quasi-Uvarov polynomial of order one is
\begin{align*}
	w_n(x)\mathcal{U}_{n+2}^{Q}(x;0)&=\left(r_{n+1}(x)e_n(x)-n(n+\alpha)s_{n+1}(x)y_n(x)\right)\mathcal{U}_{n+1}^{Q}(x;0)\\
	&\hspace{4cm}+\left(r_{n+1}(x)h_n(x)-n(n+\alpha)s_{n+1}(x)s_n(x)\right)\mathcal{U}_{n}^{Q}(x;0).
\end{align*}

%\begin{figure}[h!]
%	\includegraphics[scale=0.60]{Fig3_QG_QGOLP_n3}
%	\caption{Zeros of $\mathfrak{L}^{(0.1)}_5(z,1;0.9)$ (blue squares) and $\mathcal{L}^{(0.1)}_5(z;0.9)$ (red circles).}
%	\label{}
%\end{figure}

\section{Concluding remarks}\label{Conclusion}
In this contribution we have focused the attention on quasi-orthogonal polynomials of order one associated with sequences of orthogonal polynomials defined by linear spectral transformations (Geronimus, Uvarov) of a given sequence of orthogonal polynomials. Recurrence relations for such quasi-orthogonal polynomials have been obtained in the direction analyzed in \cite{Ismail_2019_quasi-orthogonal} by using transfer matrices. On the other hand, we can recover our initial sequence of orthogonal polynomials from two quasi-Geronimus and two quasi-Uvarov sequences of polynomials, respectively.  We obtain a representation of our initial sequence of orthogonal polynomials by using quasi-Geronimus and Geronimus, quasi-Geronimus and Uvarov, quasi-Geronimus and Christoffel sequences of polynomials, respectively, Finally, the same procedure also holds in order to  recover our initial sequence of orthogonal polynomials  by using quasi-Uvarov and Christoffel, quasi-Uvarov and Geronimus sequences of polynomials.\\

We derived the closed form of $\beta_{n}$ satisfying \eqref{beta n restriction cond}, necessary for the orthogonality of the quasi-Geronimus Laguerre polynomials of order one $\mathcal{G}_n^Q(x;a)$. Specifically, the recurrence parameters and the three-term recurrence relation satisfied by the quasi-Geronimus Laguerre polynomial of order one are obtained. Moreover, it would be of interest to determine, if possible, the explicit form of $\alpha_n$ satisfying \eqref{alpha n restriction cond} and to recover the recurrence coefficients that ensure the existence of an orthogonality measure for the quasi-Uvarov Laguerre polynomial of order one. Finally, drawing from the numerical experiments conducted in Section \ref{Numerical experiment}, we conclude this manuscript by summarizing the following observations::

\begin{observation} Let $\mathcal{G}_n^Q(x;a)$ be the polynomial of degree $n$ as defined in \eqref{monic quasi Geronimus poly}, with a non-zero unknown parameter $k_n$. Let $\widetilde{\mathcal{G}}_n^Q(x;a)$ represent the degree $n$ monic quasi-Geronimus orthogonal polynomial of order one, with the known parameter $\beta_n$ provided in Proposition \ref{orthogonality of quasi-Geronimus}. If $\mathcal{G}_n^Q(x;a)$ and $\widetilde{\mathcal{G}}_n^Q(x;a)$ intersect exactly at $m$ points, which are ordered as
	\begin{align*}
		x_1<x_2<...<x_m,
	\end{align*}
	then for any fixed degree $n$ and for any $k_n>\beta_n$, we have
	\begin{align*}
		\widetilde{\mathcal{G}}_n^Q(x;a)<\mathcal{G}_n^Q(x;a),
	\end{align*}
	whenever $x>x_m$. Moreover, for $n=2k$,
	\begin{align*}
		\widetilde{\mathcal{G}}_{2k}^{Q}(x;a)>\mathcal{G}_{2k}^Q(x;a),
	\end{align*}
	and for   $n=2k-1$,
	\begin{align*}
		\widetilde{\mathcal{G}}_{2k}^{Q}(x;a)<\mathcal{G}_{2k}^Q(x;a),
	\end{align*}
	whenever $x<x_1$.
\end{observation}
%%%%%%%%%%%%%%%%%%%%%%%%%%%%%%%%
\begin{observation}
Let $\mathcal{G}_n^Q(x;a)$ denote the monic quasi-Geronimus polynomial of order one, as defined in \eqref{monic quasi Geronimus poly}, with a free non-zero parameter $k_n$. The coefficient $\beta_n$ is given by Proposition \ref{orthogonality of quasi-Geronimus}. Then the following statements hold:
	\begin{enumerate}
		\item If $k_n>\beta_n$, then exactly one zero of $\mathcal{G}_n^Q(x;a)$ lies outside the interval of orthogonality.
		\item If $k_n<\beta_n$, then all the zeros of $\mathcal{G}_n^Q(x;a)$ lies inside the interval of orthogonality.
	\end{enumerate}
\end{observation}

%%%%%%%%%%%%%%%%%%%%%%%%%%%%%%%%%%%%%%%%%%%%%%%%%%%%%%%%%%%%

%%%%%%%%%%%%%%%%%%%%%%%%%%%%%%%%%%%%%%%%%%%

%%%%%%%%%%%%%%%%%%%%%%%%%%%%%%%%%%%%%%%%%%
	{\bf Acknowledgments}.  The work of the second author (FM) has been supported by FEDER/Ministerio de Ciencia e Innovación-Agencia Estatal de Investigación of Spain, grant PID2021-122154NB-I00, and the Madrid Government (Comunidad de Madrid-Spain) under the Multiannual Agreement with UC3M in the line of Excellence of University Professors, grant EPUC3M23 in the context of the V PRICIT (Regional Program of Research and Technological Innovation). The work of the third author (AS) has been supported by the Project No. NBHM/RP-1/2019 of National Board for Higher Mathematics (NBHM), DAE, Government of India.

\end{document}